\theoremstyle{plain}
\newtheorem{theorem}{Theorem}[section]
\newtheorem{conjecture}[theorem]{Conjecture}
\newtheorem{corollary}[theorem]{Corollary}
\newtheorem{lemma}[theorem]{Lemma}
\newtheorem{proposition}[theorem]{Proposition}
\newtheorem{question}[theorem]{Question}
\theoremstyle{definition}
\newtheorem{example}[theorem]{Example}
\newtheorem*{ack}{Acknowledgement}
\theoremstyle{remark}
\newtheorem{remark}[theorem]{Remark}
\newtheorem*{remark*}{Remark}
\newtheorem*{notation*}{Notation and Terminology}
\numberwithin{equation}{section}
\DeclareMathOperator{\Aut}{Aut}
\DeclareMathOperator{\Bim}{Bim}
\DeclareMathOperator{\GL}{GL}
\DeclareMathOperator{\Gal}{Gal}
\DeclareMathOperator{\SL}{SL}
\DeclareMathOperator{\Hom}{Hom}
\DeclareMathOperator{\id}{id}
\DeclareMathOperator{\Alb}{Alb}
\DeclareMathOperator{\alb}{alb}
\DeclareMathOperator{\End}{End}
\DeclareMathOperator{\Pic}{Pic}
\DeclareMathOperator{\Sing}{Sing}
\newcommand{\CC}{\mathbb{C}}
\newcommand{\OO}{\mathcal{O}}
\newcommand{\PP}{\mathbb{P}}
\newcommand{\QQ}{\mathbb{Q}}
\newcommand{\ZZ}{\mathbb{Z}}
\DeclareMathOperator{\Ker}{Ker}
\newcommand{\Romannum}[1]{\uppercase\expandafter{\romannumeral #1}}
\DeclarePairedDelimiter{\abs}{\lvert}{\rvert}
\DeclarePairedDelimiterX{\pair}[2]{\langle}{\rangle}{#1,#2}
\title[Wild automorphisms]{Wild automorphisms of compact complex spaces of lower dimensions}
\author{Jia Jia}
\address{National University of Singapore, Singapore 119076, Republic of Singapore}
\curraddr{Yau Mathematical Sciences Center, Jingzhai, Tsinghua University, Haidian District, Beijing, China, 100084}
\email{jia\_jia@u.nus.edu,mathjiajia@tsinghua.edu.cn}
\author{Long Wang}
\address{Shanghai Center for Mathematical Sciences, Fudan University, Jiangwan Campus, Shanghai, 200438, China;
and The University of Tokyo, 3-8-1 Komaba, Meguro-Ku, Tokyo 153-8914, Japan}
\email{wanglll@fudan.edu.cn}
\subjclass[2020]{
	14J50, 
	32M05. 
}
\keywords{Wild automorphism, Complex torus, Inoue surface, Entropy}
\begin{document}

\begin{abstract}
    An automorphism of a compact complex space is called wild in the sense of Reichstein--Rogalski--Zhang
    if there is no non-trivial proper invariant analytic subset.
    We show that a compact complex surface admitting a wild automorphism
    is either a complex torus or an Inoue surface of certain type,
    and this wild automorphism has zero entropy.
    As a by-product of our argument,
    we obtain new results about the automorphism groups of Inoue surfaces.
    We also study wild automorphisms of compact K\"ahler threefolds or fourfolds,
    and generalise the results of Oguiso--Zhang from the projective case to the K\"ahler case.
\end{abstract}

\maketitle
\setcounter{tocdepth}{1}

\section{Introduction}\label{sec:introduction}

Let \(X\) be a compact complex space.
We will use the \emph{analytic Zariski topology} on \(X\)
whose closed sets are all analytic sets (cf.~\cite{grauert1984coherent}*{Page~211}).
An automorphism \(\sigma \in \Aut(X)\) is called \emph{wild}
in the sense of Reichstein--Rogalski--Zhang (\cite{reichstein2006projectively})
if for any non-empty analytic subset \(Z\) of \(X\) satisfying \(\sigma(Z) = Z\), we have \(Z=X\);
or equivalently, for every point \(x \in X\), its orbit \(\{\sigma^n(x) \mid n\geq 0\}\) is Zariski dense in \(X\).

The following two conjectures generalise
\cite{reichstein2006projectively}*{Conjecture~0.3} and \cite{oguiso2022wild}*{Conjecture~1.4}
from the projective case to the K\"ahler case.

\begin{conjecture}[cf.~\cite{reichstein2006projectively}*{Conjecture~0.3}]\label{conj:wild}
    Assume that a compact K\"ahler space \(X\) admits a wild automorphism.
    Then \(X\) is isomorphic to a complex torus.
\end{conjecture}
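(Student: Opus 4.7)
I would organise an attempt at the conjecture into three stages: basic reductions, an Albanese analysis, and a final identification.

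\emph{Initial reductions.} Wildness immediately forces $X$ to be irreducible (otherwise the finite set of components would be permuted, producing a proper invariant analytic subset). The singular locus $\Sing(X)$ is analytic and $\sigma$-invariant, hence must be empty; so $X$ is smooth. Similarly, for each $n \geq 1$ the fixed-point set $\Fix(\sigma^n)$ is analytic and $\sigma$-invariant, and must be empty; so $\sigma$ has no periodic points. Applying the holomorphic Lefschetz fixed-point theorem to all powers of $\sigma$ then yields $\chi(X, \OO_X) = 0$ and topological Euler characteristic $\chi_{\mathrm{top}}(X) = 0$, both of which are necessary conditions satisfied by complex tori.

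\emph{Albanese analysis.} Assume for now that $h^{1,0}(X) > 0$, so that $\Alb(X)$ is non-trivial. Being canonical, the Albanese morphism $\alb \colon X \to \Alb(X)$ is $\sigma$-equivariant for some induced automorphism $\sigma_A$ of $\Alb(X)$. The image $\alb(X)$ is an analytic subset invariant under $\sigma_A$; as it also generates $\Alb(X)$ as a group, it equals $\Alb(X)$, and $\sigma_A$ is itself wild on $\Alb(X)$. The locus $\{x \in X : \dim_x \alb^{-1}(\alb(x)) > 0\}$ is analytic and $\sigma$-invariant, hence by wildness is either empty or all of $X$. In the empty case $\alb$ is finite, and a short covering argument, using that $\sigma_A$ is wild, identifies $X$ with $\Alb(X)$; the case in which it equals $X$ must be eliminated by a relative argument over $\Alb(X)$ that controls the structure of the generic fibre and contradicts the density of orbits.

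\emph{Main obstacle.} The crux is to produce a non-trivial holomorphic $1$-form on $X$ from the mere existence of a wild automorphism, i.e.\ to prove $h^{1,0}(X) > 0$; in its absence the Albanese strategy collapses. A parallel candidate is the MRC fibration $X \dashrightarrow Z$, which is canonical and hence $\sigma$-equivariant. Wildness restricts this to two extremes: either $X$ is rationally connected, in which case $X$ is projective by Campana's theorem and one is reduced to the still-open projective form of the conjecture (\cite{reichstein2006projectively}*{Conjecture~0.3}); or $X$ is not uniruled, and one must extract a non-zero $1$-form from structure theory of K\"ahler manifolds with $\kappa(X) \geq 0$. Neither reduction is currently complete in arbitrary dimension, which is precisely why dimension-restricted techniques, as developed in this paper for surfaces, threefolds, and fourfolds, remain the state of the art.
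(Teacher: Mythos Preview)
This is \cref{conj:wild}, an \emph{open conjecture}; the paper does not prove it in general, only for $\dim X\le 2$ (\cref{prop:wild_kahler_dim2}), with partial structural results in dimensions three and four (\cref{thm:main}, \cref{thm:main_zero_entropy}). You correctly acknowledge this at the end, and your strategic outline---smoothness, Lefschetz, Albanese, MRC---mirrors the paper's toolkit (\cref{lem:ppty_of_wild}, \cref{prop:numerical_inv_of_wild}, \cref{lem:oz2.7}, \cref{lem:mrc}). So there is no ``paper's proof'' to compare against; what can be assessed is whether your sketch is internally sound.

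On that score, several steps are wrong beyond the admitted incompleteness. (i)~The conclusion $\alb(X)=\Alb(X)$ does not follow from ``$\alb(X)$ generates $\Alb(X)$'': an analytic subset can generate a torus without equalling it (a curve in an abelian surface, say). One must first show $\alb(X)$ is a translated subtorus, which the paper obtains via Ueno's theorem (\cref{lem_rrz4.3}) after noting that the induced wild action on $\alb(X)$ forces $\kappa(\alb(X))\le 0$. (ii)~Your MRC dichotomy ``$X$ rationally connected or $X$ not uniruled'' omits the central case: $X$ uniruled but not rationally connected, with MRC base $Y$ of intermediate dimension $0<\dim Y<\dim X$. Indeed the rationally connected branch is already vacuous, since you computed $\chi(\OO_X)=0$ whereas rationally connected manifolds have $\chi(\OO_X)=1$; the intermediate case you drop is exactly where the paper's hardest work in dimension three takes place (proof of \cref{thm:main}, via \cref{lem_oz2.11}). (iii)~For non-uniruled $X$ you assume $\kappa(X)\ge 0$, but this implication is itself open for compact K\"ahler manifolds in arbitrary dimension; the paper can invoke it only in dimension three via Brunella's theorem.
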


\begin{conjecture}[cf.~\cite{oguiso2022wild}*{Conjecture~1.4}]\label{conj:zero_entropy}
    Every wild automorphism \(\sigma\) of a compact K\"ahler space \(X\) has zero entropy.
\end{conjecture}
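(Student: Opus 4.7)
The strategy is to prove the contrapositive: if $\sigma \in \Aut(X)$ has positive topological entropy $h_{\mathrm{top}}(\sigma) > 0$, then $\sigma$ admits a proper non-empty invariant analytic subset, contradicting wildness. The most economical choice of such a subset is a periodic orbit: if $\sigma^n(p) = p$ for some $p \in X$ and $n \geq 1$, then $\{p, \sigma(p), \ldots, \sigma^{n-1}(p)\}$ is a finite, hence analytic, $\sigma$-invariant set, and it is proper as soon as $\dim X \geq 1$. So the key reduction is: any positive-entropy automorphism of a positive-dimensional compact K\"ahler space has a periodic point.

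When $X$ is a smooth compact K\"ahler manifold, this is a classical theme in complex dynamics. The Gromov--Yomdin theorem identifies $h_{\mathrm{top}}(\sigma)$ with $\log \rho(\sigma^{*} \mid \Hm^{*}(X, \RR))$; positivity then yields a $\sigma^{*}$-invariant positive closed $(1,1)$-current $T^{+}$ and, dually, a $\sigma_{*}$-invariant $T^{-}$, whose intersection (suitably renormalised) is an equilibrium measure $\mu$. Results of Dinh--Sibony, with precursors by Bedford--Lyubich--Smillie and by Cantat in dimension two, show that $\mathrm{supp}(\mu)$ contains many saddle periodic points, with count growing exponentially in the period. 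Dimension two is therefore handled immediately by Cantat's theorem; smooth higher dimensions use the Dinh--Sibony intersection calculus or more recent equidistribution theorems for cohomologically hyperbolic automorphisms.

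The principal obstacle is transporting this dynamical picture to singular compact K\"ahler spaces, where positive closed currents, their intersections, and Gromov--Yomdin itself all become more delicate. One natural route is to pass to a functorial, hence $\Aut(X)$-equivariant, resolution $\pi \colon \widetilde X \to X$; then $\sigma$ lifts to an automorphism $\widetilde\sigma$ of equal entropy on the smooth K\"ahler manifold $\widetilde X$, and one seeks a periodic point of $\widetilde\sigma$ lying outside $\Exc(\pi)$. The hazard is that periodic points may concentrate in $\Exc(\pi)$; but $\pi(\Exc(\pi))$ is a proper $\sigma$-invariant analytic subset of $X$, so by wildness it must be empty, forcing $X$ to be smooth and reducing us to the previous case.

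A complementary line, in the spirit of Oguiso--Zhang, is first to invoke \Cref{conj:wild} to identify $X$ with a complex torus $V/\Lambda$, whence $\sigma = T_a \circ A$ for some $A \in \Aut(V/\Lambda, 0)$ and $a \in V/\Lambda$, and then to show that any eigenvalue of $A$ of modulus $\neq 1$ produces an $A$-invariant real subtorus, from which (after a translation adjustment using Kronecker-type equidistribution of the orbit of $a$) one extracts a proper $\sigma$-invariant analytic subvariety. Wildness therefore pins every eigenvalue of $A$ to the unit circle, giving $h_{\mathrm{top}}(\sigma) = 0$. The subtle part here --- and the step I expect to be most delicate --- is ruling out that the translation component $T_a$ destroys every candidate invariant subset; this is exactly where the arithmetic of $a$ relative to the eigenspaces of $A$ enters, and it is the natural analogue of the obstacle that Oguiso--Zhang overcome in the projective setting.
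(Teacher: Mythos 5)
You are trying to prove something the paper itself only states as a conjecture: the paper establishes it only for $\dim X\le 3$ and for $\dim X=4$ under extra hypotheses (\cref{thm:main_zero_entropy}), via the Albanese fibration, the product formula for dynamical degrees, low-dimensional classification, and unipotency of the action of a wild automorphism on the cohomology of a torus. A complete proof in all dimensions would be a major new result, so the bar for your argument is high, and it does not clear it.

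The genuine gap is your key reduction: ``any positive-entropy automorphism of a positive-dimensional compact K\"ahler space has a periodic point.'' This is itself an open problem in dimension $\ge 3$. Cantat's theorem handles surfaces, and the Dinh--Sibony equidistribution machinery you cite produces (and counts) saddle periodic points only for \emph{cohomologically hyperbolic} automorphisms, i.e.\ those with a single strictly dominant dynamical degree $d_p$; a general positive-entropy automorphism need not satisfy this (one can have $d_p=d_{p+1}$), and in that regime neither the equilibrium measure construction nor the lower bound on periodic points is established. So the contrapositive you want --- positive entropy forces a finite invariant analytic set --- is exactly as hard as a well-known open question, not a consequence of known theorems. (The converse direction is what the paper actually uses: wildness forces the absence of periodic points, hence $e(X)=\chi(\OO_X)=0$ via Lefschetz, as in \cref{prop:numerical_inv_of_wild}~(1).) Your second route is also not a proof: it begins by invoking \cref{conj:wild} to identify $X$ with a complex torus, which is conditioning one open conjecture on another; and even granting it, the conclusion for tori is already in the paper (\cref{prop:entropy_tori}), where the point is that \cref{thm_rrz7.2} forces the linear part to be unipotent, so no separate argument about eigenvalues off the unit circle and the arithmetic of the translation part is needed. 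The resolution-of-singularities discussion is moot, since wildness forces $X$ to be smooth directly (\cref{lem:ppty_of_wild}~(1)).
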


When \(X\) is a projective variety,
wild automorphisms are related with the twisted homogeneous coordinate rings,
which play a role in noncommutative algebraic geometry (see~\cite{reichstein2006projectively}).
The study of wild automorphisms is also of interest from the viewpoint of dynamical systems
(see~\cite{cantat2020automorphisms}). 

For a compact K\"ahler surface \(X\) with a wild automorphism \(\sigma\), it is well-known that \(X\) is a complex torus, and \(\sigma\) is of the certain form, a priori, of zero entropy (see~\cite{reichstein2006projectively}*{Theorem~6.5} and \cite{cantat2020automorphisms}*{Theorem~6.10}).

In this article, we consider compact complex surfaces (not necessarily K\"ahler) with a wild automorphism.
We give a characterisation of such surfaces and show that there do exist examples of non-K\"ahler surfaces
that admit a wild automorphism.

\begin{theorem}\label{thm:wild_dim2}
    Let \(X\) be a compact complex space of dimension \(\leq 2\).
    Assume that \(X\) admits a wild automorphism \(\sigma\).
    Then we have:
    \begin{enumerate}
        \item \(X\) is either a complex torus or an Inoue surface of type \(S_M^{(+)}\),
              and \(\sigma\) has zero entropy.
        \item Both cases in (1) occur:
              there are pairs \((X', \sigma')\)
              where \(X'\) is a complex torus or an Inoue surface of type \(S_M^{(+)}\)
              and \(\sigma'\) acts on \(X'\) as a wild automorphism.
    \end{enumerate}
\end{theorem}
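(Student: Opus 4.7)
The plan is to run through the Enriques--Kodaira classification of minimal compact complex surfaces (including the non-K\"ahler cases), eliminating every class but complex tori and Inoue surfaces of type \(S_M^{(+)}\) by exhibiting a canonically attached proper analytic subset which must then be \(\sigma\)-invariant and hence forbidden by wildness. Once this is achieved, zero entropy is verified by inspection of the automorphism group in the two surviving classes, and existence is settled by explicit constructions.

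First I would record a few basic reductions. Since wildness forbids any proper invariant analytic subset, \(X\) is automatically irreducible and reduced, and the singular locus \(\Sing(X) \subsetneq X\) is analytic and \(\sigma\)-invariant, hence empty; so \(X\) is smooth. The dimension zero case is trivial. In dimension one, \(X\) is a smooth compact Riemann surface; the cases \(g(X) = 0\) and \(g(X) \geq 2\) are excluded because \(\PP^1\) has no fixed-point-free automorphism and curves of higher genus have finite automorphism group, leaving only elliptic curves, which are one-dimensional complex tori. So assume \(\dim X = 2\). The set of \((-1)\)-curves is finite and canonically defined, hence \(\sigma\)-invariant, hence empty: \(X\) is minimal. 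The same reasoning rules out the fixed part of any pluricanonical system, the exceptional divisor of a canonical morphism, and the image of the Albanese map when of strictly smaller dimension.

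Next I would treat each class in turn. For minimal rational or ruled surfaces, the ruling or canonical rigid rational curves produce an invariant subset. For properly elliptic, hyperelliptic, and bielliptic surfaces, the canonical elliptic fibration has singular or multiple fibers, giving finite invariant curves. For K3 and Enriques surfaces one invokes the dynamical results of Cantat--McMullen to produce an invariant curve or fixed point. Primary and secondary Kodaira surfaces carry a canonical elliptic fibration, which is invariant. For class \(\mathrm{VII}_0\) surfaces: Hopf surfaces contain an invariant elliptic curve; Inoue--Hirzebruch and intermediate surfaces contain a cycle of rational curves coming from a global spherical shell; Inoue surfaces of type \(S_N^\pm\) contain a canonical elliptic curve arising from their construction. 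What survives is complex \(2\)-tori and Inoue \(S_M^{(+)}\).

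Finally, for zero entropy and existence: every automorphism of a complex \(2\)-torus has the form \(t_a \circ A\) with \(A\) a Lie group automorphism; if \(A\) had positive entropy, its hyperbolic eigen-subspaces would descend to a proper invariant subtorus or to an invariant foliation with a closed leaf, contradicting wildness, so \(\sigma\) has zero entropy, and a generic translation is wild. For Inoue \(S_M^{(+)}\), I would lift to the universal cover \(\mathbb{H} \times \CC\) and use Inoue's description of \(\Aut(S_M^{(+)})\) together with the fact that \(\pi_1(S_M^{(+)})\) is virtually solvable to conclude that every automorphism acts with finite order on cohomology, giving zero entropy; and I would construct a wild automorphism as an affine map of \(\mathbb{H} \times \CC\) normalising \(\pi_1(S_M^{(+)})\) whose translation factor is chosen to have dense orbit. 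Wildness then follows from Inoue's theorem that \(S_M\) contains no complex curves, reducing candidate invariant subsets to isolated points, which are easily excluded. The main obstacle will be a uniform and complete treatment of the non-K\"ahler class \(\mathrm{VII}\) surfaces and in particular the verification that \(S_M^{(+)}\) (as opposed to its close relatives \(S_N^\pm\) or Inoue--Hirzebruch) actually admits a wild automorphism.
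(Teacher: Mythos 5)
Your overall strategy (classification plus canonically defined invariant subsets) is the right skeleton, and your treatment of the K\"ahler classes, of Hopf surfaces, and of the existence statement for \(S_M^{(+)}\) is essentially what the paper does. But there is a genuine gap at the decisive step: you claim that Inoue surfaces of the other types contain ``a canonical elliptic curve arising from their construction.'' They do not. All three types of Inoue surfaces \(S_M\), \(S_M^{(+)}\), \(S_M^{(-)}\) contain \emph{no} curves whatsoever (this is Inoue's theorem, and it is exactly what you later use to prove wildness on \(S_M^{(+)}\)); you appear to be conflating them with Inoue--Hirzebruch or parabolic Inoue surfaces, which have \(b_2>0\). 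Consequently the invariant-subset strategy cannot distinguish \(S_M\) and \(S_M^{(-)}\) from \(S_M^{(+)}\), and your proposal has no mechanism to exclude them. The paper's resolution is entirely different and is its main new input in dimension two: one computes the full automorphism groups of Inoue surfaces by analysing the normaliser of \(\pi_1\) in \(\Aut(\mathbb{H}\times\CC)\), and shows via an arithmetic argument (the commutant of the defining matrix \(M\) in \(\GL_n(\ZZ)\) is the unit group of an order in a number field, hence \(\mu\times\ZZ\) by Dirichlet's unit theorem) that \(\Aut(X)\) is \emph{finite} for types \(S_M\) and \(S_M^{(-)}\) — so they admit no wild automorphism, which necessarily has infinite order — while for \(S_M^{(+)}\) the identity component is \(\CC^*\). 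Without something playing this role your proof does not close.

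Two smaller problems. First, your enumeration of class VII surfaces with \(b_2>0\) (``Inoue--Hirzebruch and intermediate surfaces'') is only known to be exhaustive under the global spherical shell conjecture; you should instead observe that a wild automorphism forces \(e(X)=0\) (Lefschetz), hence \(b_2(X)=b_2(X)-b_1(X)+b_0(X)+1-1=0\) for a minimal class VII surface, which reduces you to the completely classified case \(b_2=0\) (Bogomolov, Li--Yau--Zheng, Teleman): Hopf or Inoue. Second, your zero-entropy argument for tori is wrong as stated: the stable/unstable eigenspaces of a hyperbolic automorphism generically do \emph{not} descend to subtori or closed leaves (there are hyperbolic automorphisms of simple abelian surfaces with no proper subtori at all). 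The correct argument is that wildness forces \(\sigma=T_b\circ\alpha\) with \(\alpha\) \emph{unipotent} (Reichstein--Rogalski--Zhang; a non-unipotent \(\alpha\) produces a periodic point or an invariant coset of a subtorus), whence the action on \(H^*(X,\CC)\) is unipotent and all dynamical degrees equal \(1\).
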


We refer to \cref{sec:inoue_surfaces} for the definition and the constructions of Inoue surfaces.
In \cref{sec:inoue_surfaces},
we will construct examples of wild automorphisms of Inoue surfaces of type \(S_M^{(+)}\) (see Example \ref{ex:smp}).
We remark that there are examples of wild automorphisms of complex abelian surfaces.
More strongly, there are complex abelian surfaces with an automorphism of which all orbits are Euclidean dense
(see~\cite{cantat2020automorphisms}*{Example~6.6 and Lemma~6.7}).

\begin{question}[cf.~\cite{cantat2022free}*{Section~4.2}]
    Are there Inoue surfaces of type \(S_M^{(+)}\) with an automorphism of which all orbits are Euclidean dense?
\end{question}

As a by-product of our argument,
we obtain new results about the automorphism groups of Inoue surfaces,
which might be of independent interest.
\Cref{thm:aut_inoue} below gives a more refined structure than \cite{jia2022automorphisms}*{Section~6}.
We remark that Inoue surfaces are divided into three different types:
\(S_M\), \(S_M^{(+)}\) and \(S_M^{(-)}\).

\begin{theorem}[=~\cref{thm:aut_sm,thm:aut_smp,thm:aut_smn}]
\label{thm:aut_inoue}
    Let \(X\) be an Inoue surface.
    \begin{enumerate}[wide=0pt, leftmargin=*]
        \item If \(X\) is either of type \(S_M\) or \(S_M^{(-)}\),
              then the (biholomorphic) automorphism group \(\Aut(X)\) is finite.
        \item If \(X\) is of type \(S_M^{(+)}\),
              the neutral connected component \(\Aut_0(X) \simeq \mathbb{C}^*\) and \(\Aut(X)/\Aut_0(X)\) is finite.
    \end{enumerate}
\end{theorem}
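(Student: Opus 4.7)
The plan is to pass to the universal cover $\tilde X \simeq \mathbb{H}\times\CC$, write the Inoue surface as $X = \tilde X/\Gamma$ for the explicit discrete affine group $\Gamma = \pi_1(X)$ (different for each of the three types $S_M$, $S_M^{(\pm)}$), and use the identification
\[
\Aut(X) \;\simeq\; N_{\Aut(\tilde X)}(\Gamma)/\Gamma,
\]
so that $\Aut_0(X)$ corresponds to continuous families in $\Aut(\tilde X)$ that normalise $\Gamma$, taken modulo $\Gamma$ itself.

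First, I would show that every element of $N_{\Aut(\tilde X)}(\Gamma)$ preserves the canonical projection $\tilde X \to \mathbb{H}$. The translation subgroup $T \subset \Gamma$ (generated by the $g_i$'s) is characteristic in $\Gamma$ — it is the maximal abelian normal subgroup, or equivalently the kernel of the canonical surjection $\Gamma \to \ZZ$ that records the ``hyperbolic exponent'' of an element with respect to $g_0$. Since $T$ is preserved by the normalizer and the $T$-orbits are exactly the $\CC$-fibres of $\tilde X \to \mathbb{H}$, this projection is preserved. Hence every normalizing automorphism has the form $(z,w)\mapsto(\phi(z),\, a(z)\,w + b(z))$ with $\phi \in \Aut(\mathbb{H})$ and $a,b$ holomorphic on $\mathbb{H}$. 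Conjugating the hyperbolic generator $g_0$, whose $z$-component is the scaling $z\mapsto\alpha z$, forces $\phi$ to be a scaling of $\mathbb{H}$ commuting with $z\mapsto\alpha z$ (up to a discrete reflection), and conjugating the translation generators forces $a(z)$ and $b(z)$ to be constants determined up to a lattice.

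A type-by-type analysis then finishes the proof. For $S_M$ and $S_M^{(-)}$: the hyperbolic generator $g_0$ acts on the $w$-coordinate by multiplication by $\beta \ne 1$, so any pure $w$-translation centralising $g_0$ must be trivial; combined with the rigidity of the $\phi$-component and a finite-index argument on the induced action on the translation lattice of $\Gamma$, this forces $\Aut(X)$ to be finite. For $S_M^{(+)}$: the hyperbolic generator acts on $w$ only by a translation (due to the ``parabolic'' form of the defining matrix in this case), so the entire 1-parameter group of $w$-translations $w\mapsto w+c$ centralises $\Gamma$ and descends to $X$. Modding out by the rank-$1$ $\ZZ$-lattice of pure $w$-translations contained in $\Gamma$ yields $\Aut_0(X)\simeq\CC/\ZZ\simeq\CC^*$. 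The residual quotient $\Aut(X)/\Aut_0(X)$ is finite because it injects into the automorphism group of the rank-$2$ transverse lattice compatible with the $g_0$-action.

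The main obstacle will be to rule out normalizing automorphisms of $\tilde X$ that do not preserve the fibration $\tilde X\to\mathbb{H}$, and to verify in the $S_M^{(+)}$ case that the continuous component is exactly $\CC^*$ — neither smaller nor larger — both requiring careful bookkeeping of the interaction between the $z$- and $w$-periods of $\Gamma$.
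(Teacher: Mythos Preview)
Your overall strategy---lift to the universal cover and compute \(\Aut(X)\) as the normaliser of \(\Gamma\) modulo \(\Gamma\)---is exactly the paper's, which organises the same computation through a pair of snake-lemma diagrams (drawing on the explicit description of the normaliser from \cite{jia2022automorphisms}). Two points deserve attention.

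First, your argument that normalising automorphisms preserve the projection \(\mathbb{H}\times\CC\to\mathbb{H}\) is not right as stated: for types \(S_M^{(\pm)}\) the subgroup \(T=H(r)\) generated by \(g_1,g_2,g_3\) is not abelian, and in every type the \(T\)-orbits are \emph{not} the \(\CC\)-fibres (the generators \(g_i\) translate the \(\mathbb{H}\)-coordinate as well). Fortunately the conclusion is immediate for a different reason: any holomorphic map \(\CC\to\mathbb{H}\) is constant, so every biholomorphism of \(\mathbb{H}\times\CC\) automatically respects the projection to \(\mathbb{H}\). You can simply replace the characteristic-subgroup paragraph by this one-line Liouville observation.

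Second, and this is the genuine gap: your ``finite-index argument on the induced action on the translation lattice'' and the claim that \(\Aut(X)/\Aut_0(X)\) ``injects into the automorphism group of the rank-2 transverse lattice compatible with the \(g_0\)-action'' do not by themselves yield finiteness. That target group is \(\Gamma=\{N\in\GL_n(\ZZ)\mid N\ \text{commutes with}\ M\}\), which is infinite; what you must show is that \(\Gamma/M^{\ZZ}\) is finite. This is the paper's key \cref{lem:finite_quotient}: since the characteristic polynomial of \(M\) is irreducible over \(\QQ\), the centraliser of \(M\) in \(\GL_n(\QQ)\) is a number field \(\mathbf{K}\), the integral matrices form an order \(\mathcal{O}\subset\mathbf{K}\), and \(\Gamma=\mathcal{O}^{*}\). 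Dirichlet's unit theorem gives \(\mathcal{O}^{*}\simeq(\text{finite})\times\ZZ\) in the relevant signatures, so \(\Gamma/M^{\ZZ}\) is finite. Without this arithmetic input the finiteness of \(\Aut(X)\) (for \(S_M\), \(S_M^{(-)}\)) and of \(\Aut(X)/\Aut_0(X)\) (for \(S_M^{(+)}\)) remains unproved.
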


Fujiki \cite{fujiki2009automorphisms} has studied the automorphism groups of parabolic Inoue surfaces.
It is worth noting that a parabolic Inoue surface has positive second Betti number,
which distinguishes it from the usual Inoue surfaces.

We propose the following questions rather than conjectures due to the lack of evidence.

\begin{question}
    \leavevmode
    \begin{enumerate}[wide=0pt, leftmargin=*]
        \item Is a compact complex space in Fujiki's class \(\mathcal{C}\)
              admitting a wild automorphism a complex torus?
        \item Does every wild automorphism of a compact complex space have zero entropy?
    \end{enumerate}
\end{question}

A compact complex space is called in \emph{Fujiki's class \(\mathcal{C}\)}
if it is bimeromorphic to a compact K\"ahler manifold.
In dimension two, a compact complex manifold is in Fujiki's class \(\mathcal{C}\) if and only if it is K\"ahler,
while starting from dimension three,
the category of Fujiki's class \(\mathcal{C}\) is strictly larger.
In particular, the answers to both two questions are affirmative in dimension two due to \cref{thm:wild_dim2}.

In the rest of this article,
we study Conjectures~\ref{conj:wild} and \ref{conj:zero_entropy} in dimension three and four.

\begin{theorem}\label{thm:main}
    Let \(X\) be a compact K\"{a}hler space of dimension three,
    and let \(\sigma\) be a wild automorphism of \(X\). Then
    \begin{enumerate}[wide = 0pt, leftmargin=*]
        \item \(X\) is either a complex torus or a weak Calabi--Yau threefold;
        \item \(\sigma\) has zero entropy.
    \end{enumerate}
\end{theorem}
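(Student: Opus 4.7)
The overall plan has two parts: a birational classification of \(X\) via the K\"ahler Minimal Model Program and the Beauville--Bogomolov decomposition, followed by a dynamical analysis of \(\sigma^*\) on \(H^{1,1}(X, \RR)\) via Gromov--Yomdin and invariant positive currents. A basic running principle is that for any \(\sigma\)-equivariant canonical construction \(X \dashrightarrow Y\), the base \(Y\) inherits wildness on its image (any proper invariant analytic subset of \(Y\) pulls back to one on \(X\)), so the dimension-two classification of \cref{thm:wild_dim2} supplies an induction hypothesis.

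\smallskip

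\textbf{Smoothness and numerical triviality of \(K_X\).} First, \(\Sing(X)\) is a proper \(\sigma\)-invariant closed analytic subset, hence empty, so \(X\) is smooth. To obtain \(K_X \equiv 0\), one rules out uniruledness: if \(X\) were uniruled, the MRC fibration \(X \dashrightarrow Y\) with \(0 \leq \dim Y \leq 2\) is \(\sigma\)-equivariant. When \(\dim Y \geq 1\), \cref{thm:wild_dim2} forces the base to be a complex torus, and the combination of a wild translation on the torus base with rationally connected fibres yields a proper \(\sigma\)-invariant analytic subset of \(X\) after an equivariant resolution (following the projective arguments of Oguiso--Zhang adapted to the K\"ahler setting); when \(\dim Y = 0\), so that \(X\) is rationally connected, one extracts such a subset from the Mori fibre structure of K\"ahler rationally connected threefolds. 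Hence \(K_X\) is pseudo-effective, and by the abundance theorem for compact K\"ahler threefolds (H\"oring--Peternell, Campana--H\"oring--Peternell) \(K_X\) is semi-ample. The Iitaka fibration is \(\sigma\)-equivariant, and \(\kappa(X) \geq 1\) would produce a proper invariant subset as the preimage of any divisor on the base. Hence \(\kappa(X) = 0\) and \(K_X \equiv 0\).

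\smallskip

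\textbf{Torus versus weak Calabi--Yau.} The K\"ahler Beauville--Bogomolov theorem provides a finite \'etale cover \(\widetilde{X} \to X\) isomorphic to a product of a complex torus and simply-connected factors with trivial canonical class and irreducible holonomy. In dimension three, irreducible holomorphic symplectic factors are excluded, so \(\widetilde{X}\) is either \(T^3\), or \(E \times S\) with \(E\) an elliptic curve and \(S\) a K3 surface, or a simply-connected strict Calabi--Yau threefold. After replacing \(\sigma\) by an iterate and lifting to \(\widetilde{X}\) compatibly with the decomposition (using uniqueness), the mixed case \(\widetilde{X} \simeq E \times S\) is ruled out by Lefschetz: since \(\chi_{\mathrm{top}}(S) = 24\), the induced automorphism on \(S\) has a fixed point, and \(E \times \{\text{pt}\}\) then descends to a proper \(\sigma^n\)-invariant closed analytic subset of \(X\), contradicting wildness. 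Hence \(X\) is either a complex torus or a weak Calabi--Yau threefold.

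\smallskip

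\textbf{Zero entropy and main obstacle.} By Gromov--Yomdin, \(h_{\mathrm{top}}(\sigma) = \log \rho\), where \(\rho\) is the spectral radius of \(\sigma^*\) on \(H^{1,1}(X, \RR)\). For \(X\) a torus, the \(H^{1,1}\)-action is determined by the linear part of \(\sigma\) on \(H^{1,0}\), and the structure of wild automorphisms of complex tori in the spirit of \cite{reichstein2006projectively} forces all relevant eigenvalues to have modulus one. For \(X\) a weak Calabi--Yau threefold, \(h^{2,0}(X) = 0\) gives \(H^2(X, \CC) = H^{1,1}(X)\); if some eigenvalue of \(\sigma^*\) on \(H^{1,1}\) had modulus \(\rho > 1\), the Dinh--Sibony construction yields a non-zero closed positive \((1,1)\)-current \(T\) with \(\sigma^* T = \rho T\), and the Siu decomposition of \(T\) (level sets of Lelong numbers) produces a \(\sigma\)-invariant proper analytic subset, contradicting wildness. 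I expect the main obstacle to lie precisely in this last step: without a polarisation, the eigenvalues of \(\sigma^*\) on \(H^{1,1}\) lack the arithmetic rigidity available in the projective setting, and the extraction of an invariant analytic subset from an invariant current must be carried out entirely within the K\"ahler cone and the pluripotential-theoretic framework.
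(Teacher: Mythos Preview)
Your proposal has two genuine gaps, both at the places you flag as delicate.

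First, the uniruled case is exactly where the paper's new work lies, and you handwave it. When the MRC base \(Y\) is a \(2\)-torus and the fibres are \(\PP^1\), one must exclude the possibility that \(\sigma_Y\) is a translation; this is done via the technical \cref{lem_oz2.11}, which says that if \(\sigma_A^s\) fixes a big \((1,1)\)-class on the base torus then \(-K_F\) cannot be big. Proving this in the K\"ahler (non-projective) setting requires \cref{prop:lie_gp_action_contains_wild} (a genuine extension of \cite{reichstein2006projectively}*{Proposition~3.2} using Fujiki's structure theorem for \(\Aut_0\)) together with \cref{prop:finite_index}. The paper explicitly identifies these as ``new ingredients which are analytic in nature''; your phrase ``following the projective arguments of Oguiso--Zhang adapted to the K\"ahler setting'' is precisely the adaptation that constitutes the main contribution, and it is not automatic. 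Once \(\beta=\alpha-\id\neq 0\), the paper factors \(X\to Y\to E=Y/\Ker\beta\) and reduces to the argument of \cite{oguiso2022wild}*{Proposition~4.1}; the \(q(X)=1\) case is dispatched by observing that \(X\) is then projective. Your route through abundance is unnecessary: \(\kappa(X)\le 0\) is \cref{prop:numerical_inv_of_wild}~(2), and Brunella gives uniruledness from \(\kappa<0\) directly.

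Second, your Siu-decomposition argument for zero entropy on a weak Calabi--Yau threefold fails outright. If \(\sigma^*T=\rho T\) with \(\rho>1\), then \(\nu(T,\sigma(x))=\rho\,\nu(T,x)\), so iterating gives \(\nu(T,\sigma^n(x))=\rho^n\nu(T,x)\); boundedness of Lelong numbers forces \(\nu(T,\cdot)\equiv 0\), and the Siu level sets are empty. The paper avoids this entirely: a weak Calabi--Yau threefold has \(q=0\) and is therefore projective, so \cite{oguiso2022wild}*{Lemma~6.1} applies verbatim. (A minor point: your Lefschetz argument on the K3 factor is also not quite right, since \(\chi_{\mathrm{top}}(S)=24\) does not by itself force fixed points---Enriques involutions are fixed-point-free. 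The correct statement, already contained in \cref{prop:numerical_inv_of_wild}~(1), is that wildness of \(\sigma|_S\) forces \(e(S)=0\).)
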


Here a smooth complex projective variety \(V\) is called
\begin{enumerate}
    \item a \emph{weak Calabi--Yau manifold}, if \(K_V \sim_{\QQ} 0\) and \(\pi_1(V)\) is finite;
    \item a \emph{Calabi--Yau manifold in the strict sense},
          if \(V\) is simply connected, \(K_V \sim 0\) and \(H^{j}(V,\mathcal{O}_{V}) = 0\) for \(0 < j < \dim V\).
\end{enumerate}

Let us remark that, \(X\) in \cref{thm:main} could not be a weak Calabi--Yau threefold
if one assumes the generalised non-vanishing conjecture
which predicts that any nef Cartier divisor on a Calabi--Yau threefold is effective
(\cite{oguiso2022wild}*{Theorem~7.4}, see also \cite{kirson2010wild}*{Theorem~4.7}).
The following proposition provides further evidence.

\begin{proposition}\label{prop:CY}
    Let \(X\) be a weak Calabi--Yau threefold, and let \(c_2(X)\) be the second Chern class of \(X\).
    Assume that either
    \begin{enumerate}
        \item \(c_2(X)\cdot D > 0\) for every non-torsion nef Cartier divisor \(D\) on \(X\); or
        \item there exists a non-torsion semi-ample Cartier divisor \(D\) on \(X\) such that \(c_2(X) \cdot D = 0\).
    \end{enumerate}
    Then \(X\) has no wild automorphism.
\end{proposition}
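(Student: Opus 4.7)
The plan is to argue by contradiction: assume $\sigma$ is a wild automorphism of $X$ and produce a proper non-empty $\sigma$-invariant analytic subset. The engine of the argument is the following observation: given a non-torsion Cartier divisor $D$ with $\sigma^{\ast} D \sim D$ and $H^{0}(X, \OO_{X}(mD)) \neq 0$ for some $m > 0$, the induced linear $\sigma$-action on the finite-dimensional space $H^{0}(X, \OO_{X}(mD))$ admits a non-zero eigenvector $s$, and the effective divisor $\mathrm{div}(s)$ is then proper and $\sigma$-invariant, contradicting wildness. On a weak Calabi--Yau threefold $X$, the finiteness of $\pi_{1}(X)$ together with $K_{X} \sim_{\QQ} 0$ forces the Albanese to be trivial (by Kawamata's theorem the Albanese map is surjective for varieties of Kodaira dimension zero, and a non-trivial Albanese would give $\pi_{1}(X)$ a surjection onto a lattice of positive rank), hence $h^{1}(\OO_{X}) = 0$ and $\Pic^{0}(X) = 0$, so that numerical and linear equivalence coincide on $\Pic(X)$. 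The task therefore reduces to producing a $\sigma^{\ast}$-invariant, non-torsion, semi-ample Cartier divisor.

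To obtain an invariant nef class, I would apply a Perron--Frobenius-type theorem to the $\sigma^{\ast}$-stable closed cone $\Nef(X) \subseteq \NS(X)_{\RR}$. By \cref{thm:main}(2), $\sigma$ has zero entropy, so the spectral radius of $\sigma^{\ast}$ on $\NS(X)_{\RR}$ equals one. After replacing $\sigma$ by a suitable power, Perron--Frobenius supplies a non-zero nef class $\alpha$ with $\sigma^{\ast}\alpha = \alpha$; since the fixed subspace $\NS(X)_{\RR}^{\sigma^{\ast}}$ is defined over $\QQ$, a Ces\`{a}ro-averaging/rational approximation argument produces a non-torsion nef Cartier $D$ with $\sigma^{\ast} D \sim D$. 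Under hypothesis~(2), I would run the same argument inside the $\sigma^{\ast}$-invariant subcone $\Nef(X) \cap c_{2}(X)^{\perp}$ (invariance follows from $\sigma^{\ast} c_{2}(X) = c_{2}(X)$), which is non-trivial by the assumed existence of a non-torsion semi-ample class in it, to arrange $c_{2}(X)\cdot D = 0$ with $D$ lying in the face of $\Nef(X)$ generated by the given semi-ample divisor.

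The decisive final step is to upgrade this $\sigma^{\ast}$-invariant nef $D$ to a semi-ample divisor. Riemann--Roch on $X$ reads $\chi(\OO_{X}(mD)) = \tfrac{m^{3}D^{3}}{6} + \tfrac{m\, c_{2}(X)\cdot D}{12}$. For case~(1), the positivity of $c_{2}(X)\cdot D$ together with known abundance / base-point-free results on Calabi--Yau threefolds force $D$ to be semi-ample: if $D^{3} > 0$ then $D$ is big and Kawamata--Viehweg vanishing combined with $\chi(mD) > 0$ gives $h^{0}(mD) > 0$ directly, and the remaining case $D^{3} = 0$ is handled by Kawamata's structure theorem for nef divisors of numerical dimension one on CY threefolds. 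For case~(2), the invariant $D$ sits in the face of $\Nef(X)$ of the given semi-ample class, so semi-ampleness is inherited from the Iitaka fibration of that class. Once $D$ is semi-ample, the observation of the first paragraph produces the required proper $\sigma$-invariant effective divisor.

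The main obstacle is this final semi-ampleness step: converting a $\sigma^{\ast}$-invariant nef class into an effective (or semi-ample) one is essentially the non-vanishing / abundance problem for Calabi--Yau threefolds, and it is precisely to circumvent this open problem that the proposition assumes the positivity hypotheses (1) or (2) on $c_{2}(X)$.
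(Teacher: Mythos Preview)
Your approach diverges substantially from the paper's, and case~(2) has a genuine gap.

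For case~(1), the paper simply invokes Wilson's lemma: the hypothesis \(c_2(X)\cdot D>0\) for all non-torsion nef \(D\) means that the \(\sigma^*\)-invariant linear functional \(c_2(X)\cdot(-)\) is strictly positive on \(\Nef(X)\setminus\{0\}\), so \(\Nef(X)\cap\{c_2\cdot D=1\}\) is compact and the image of \(\Aut(X)\) in \(\GL(\NS(X))\) is finite; since \(H^0(X,T_X)=0\) on a weak Calabi--Yau, \(\Aut(X)\) itself is finite and no wild automorphism exists. Your route via Perron--Frobenius, rational approximation, Riemann--Roch and abundance can plausibly be pushed through in this case, but it is much heavier and the step ``\(D^3=0\) is handled by Kawamata's structure theorem for nef divisors of numerical dimension one'' is not a result one can simply cite---effectivity of nef divisors on Calabi--Yau threefolds is exactly the open non-vanishing problem unless one has extra input such as bigness.

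For case~(2), the paper uses the \emph{maximal \(c_2\)-contraction} \(\phi\colon X\to Y\) of Oguiso--Sakurai. The point is that \(\phi\) is canonically attached to \(X\), hence automatically \(\sigma\)-equivariant; the given semi-ample divisor with \(c_2\cdot D=0\) guarantees \(\phi\) is not an isomorphism. One then rules out \(\dim Y=3\) (birational, hence isomorphism by \cref{lem:ppty_of_wild2}) and \(\dim Y\le 2\) (by \cref{thm:wild_dim2} \(Y\) would be a torus, contradicting \(q(X)=0\)). Your argument instead tries to manufacture a \(\sigma^*\)-fixed nef class inside \(\Nef(X)\cap c_2(X)^\perp\) and then asserts it ``lies in the face generated by the given semi-ample divisor'' so that ``semi-ampleness is inherited from the Iitaka fibration''. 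Neither step is justified: Perron--Frobenius gives you \emph{some} fixed ray in the subcone, with no control on which face it lies in, and there is no general principle by which semi-ampleness propagates to other classes in a face of the nef cone. Without semi-ampleness (or at least effectivity) of the invariant class, your contradiction does not go through.
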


\begin{theorem}\label{thm:main_zero_entropy}
    \Cref{conj:zero_entropy} is true in all three cases below.
    \begin{enumerate}[wide=0pt, leftmargin=*]
        \item \(\dim X \leq 3\).
        \item \(\dim X = 4\), and the Kodaira dimension \(\kappa(X) \geq 0\),
        \item \(\dim X = 4\), and the irregularity \(q(X) \neq 1,2\).
    \end{enumerate}
\end{theorem}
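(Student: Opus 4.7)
The plan is to treat the three parts in turn, reducing wherever possible to previously established cases.

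Part (1) is immediate: the zero-entropy conclusion is already asserted by Theorem~\ref{thm:wild_dim2} when $\dim X\leq 2$, and by Theorem~\ref{thm:main}(2) when $\dim X=3$.

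For Part (2), with $\dim X=4$ and $\kappa(X)\geq 0$, I would split by the Kodaira dimension. The case $\kappa(X)=4$ is incompatible with wildness, since then $\Aut(X)$ is finite and every $\sigma$-orbit is finite, giving a proper invariant analytic subset. For $1\leq\kappa(X)\leq 3$, I would pass to a $\sigma$-equivariant model of the Iitaka fibration $\phi\colon X\dashrightarrow Y$ with $\dim Y=\kappa(X)<4$; canonicity of the Iitaka fibration guarantees that $\sigma$ descends to $\sigma_Y\in\Aut(Y)$, and the general fibre $F$ has $\kappa(F)=0$ with $\dim F\leq 3$. Combining the Dinh--Nguyen--Truong product formula for relative dynamical degrees with the surface and threefold cases applied to $\sigma_Y$, together with the classification of low-dimensional $\kappa=0$ manifolds to control the dynamics on the fibres, I would force every dynamical degree of $\sigma$ to equal $1$; Gromov--Yomdin then gives $h_{\mathrm{top}}(\sigma)=0$. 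The case $\kappa(X)=0$ requires a $\sigma$-equivariant minimal model programme, after which the Beauville--Bogomolov type decomposition for minimal models with numerically trivial canonical class reduces the question to wild dynamics on factors of torus, strict Calabi--Yau, or hyperk\"ahler type; on each factor zero entropy follows either from a lower-dimensional case of the present theorem or from a direct analysis of torus automorphisms.

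For Part (3), with $q(X)\in\{0,3,4\}$, I would exploit the Albanese morphism $\alb\colon X\to A=\Alb(X)$. When $q(X)=4$, $\alb$ is bimeromorphic; any exceptional divisor of $\alb$ would give a proper $\sigma$-invariant subset, so $\alb$ must be an isomorphism and $X$ is a torus, where zero entropy follows from wildness since the linear part of $\sigma$ must then have all eigenvalues of modulus $1$ (otherwise one exhibits an invariant subtorus or a periodic subvariety). When $q(X)=3$, $\alb$ is a fibration with one-dimensional general fibres; the induced $\sigma_A\in\Aut(A)$ is itself wild on $A$ (since $\alb$ is surjective, pulled-back invariant subvarieties of $A$ give invariant subvarieties of $X$), and zero entropy for $\sigma_A$ via Part (1) combined with the relative dynamical degree formula yields zero entropy for $\sigma$. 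The main obstacle is $q(X)=0$: the Albanese is trivial and Part (2) does not directly apply when $\kappa(X)<0$. I would then run the $\sigma$-equivariant MMP available for compact K\"ahler fourfolds, producing either a Mori fibre space (whose $\sigma$-equivariant MRC reduction lowers the effective dimension) or a minimal model with $\kappa\geq 0$, the latter reducing to Part (2). The hardest sub-step will be running the MMP equivariantly under a wild action when $q(X)=0$, since the usual rigidifying tools---invariant subvarieties, periodic points, and the Albanese morphism---are all simultaneously unavailable.
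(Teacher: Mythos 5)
Part (1) of your proposal coincides with the paper's proof. Parts (2) and (3), however, each contain a genuine gap.

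For part (2) you have overlooked that wildness already forces \(\kappa(X)\leq 0\) (\cref{prop:numerical_inv_of_wild}~(2)), so the hypothesis \(\kappa(X)\geq 0\) immediately gives \(\kappa(X)=0\) and \(K_X\sim_{\QQ}0\) on the smooth \(X\) itself: the cases \(\kappa(X)=4\) and \(1\leq\kappa(X)\leq 3\) are vacuous, and neither an Iitaka fibration nor an equivariant MMP is needed. The real gap is in your treatment of \(\kappa(X)=0\): after the Beauville--Bogomolov splitting you propose to conclude on each factor ``from a lower-dimensional case of the present theorem,'' but a factor could a priori be four-dimensional --- a strict Calabi--Yau fourfold or a hyperk\"ahler fourfold --- and for such a factor no lower-dimensional case applies and you have no argument. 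The paper closes exactly this loophole with \(\chi(\mathcal{O}_{X_i})=0\) for each factor (\cref{prop:numerical_inv_of_wild}~(1)): a strict Calabi--Yau fourfold has \(\chi(\mathcal{O})=2\) and a hyperk\"ahler fourfold has \(\chi(\mathcal{O})=3\), so every factor is a complex torus or a Calabi--Yau threefold in the strict sense, and \cref{prop:entropy_tori} together with \cref{thm:main} finishes.

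For part (3) the cases \(q(X)=3,4\) are fine and amount to \cref{prop_oz2.15} (\(q\geq n-1\)). The gap is \(q(X)=0\). A \(\sigma\)-equivariant MMP for compact K\"ahler fourfolds is not an available tool at the level you would need, and --- as you concede --- you have none of the usual rigidifying structures (invariant subvarieties, periodic points, Albanese) to run it with; moreover, even granting a Mori fibre space output, you would still have to control the entropy of \(\sigma\) over a base of dimension up to three with rationally connected fibres, which is not supplied by the lower-dimensional cases of the theorem. Note also that part (3) carries no hypothesis on \(\kappa(X)\), so the reduction to part (2) is not automatic. The paper's route is entirely cohomological: since \(q(X)=0\) and \(\chi(\mathcal{O}_X)=0\), the argument of \cite{oguiso2022wild}*{Lemma~6.3}, which uses only Hodge theory and therefore works on compact K\"ahler manifolds, shows directly that \(d_1(\sigma)=1\), and Gromov--Yomdin concludes. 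No birational geometry is required.
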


\medskip
\noindent
\textbf{Difficulty in the non-algebraic setting.}
With the focus on Jordan property and so on,
both \cite{prokhorov2020automorphism}*{Section~7} and \cite{jia2022automorphisms}*{Section~6} studied the automorphism groups of Inoue surfaces.
We further determined the full automorphism group (\cref{thm:aut_inoue}),
which is one of the main ingredients of the proof of \cref{thm:wild_dim2}.
For this, we rely on a key \cref{lem:finite_quotient} which is of arithmetic flavor.

\Cref{thm:main,thm:main_zero_entropy} generalise the results of Oguiso--Zhang (\cite{oguiso2022wild}*{Theorems~1.2 and~1.5})
from the projective case to the K\"ahler case.
Compared with the projective case, the main difficulty here is to show the technical \cref{lem_oz2.11} and \cref{prop:lie_gp_action_contains_wild}.
In order to achieve this, we need some new ingredients which are analytic in nature.

We are unable to generalise our results to the case of Fujiki's class \(\mathcal{C}\),
due to the lack of some fundamental tools,
such as the \(\Aut\)-equivariant K\"ahler model, the Beauville--Bogomolov decomposition and the Gromov--Yomdin theorem.
In authors' opinion,
it would be more interesting if there exist examples of non-K\"ahler manifolds
in Fujiki's class \(\mathcal{C}\) that admit a wild automorphism.

\subsection*{Structure of the paper}

In \cref{sec:preliminaries}, we collect some basic facts about wild automorphisms. In \cref{sec:tech_lem}, we prove a technical lemma for later use.
In \cref{sec:wild_dim2}, we study wild automorphisms of compact complex surfaces.
In \cref{sec:inoue_surfaces}, we study the automorphism groups of Inoue surfaces.
The proofs of \cref{thm:wild_dim2,thm:aut_inoue}
will be completed in \cref{sec:wild_dim2,sec:inoue_surfaces}.
In \cref{sec:wild_dim3}, we prove \cref{thm:main,prop:CY,thm:main_zero_entropy}.

\begin{ack}
    The authors would like to thank Professor Keiji Oguiso and Professor De-Qi Zhang
    for many valuable suggestions and inspiring discussions.
    The authors also thank the referee for comments.
    The second author thanks Department of Mathematics at National University of Singapore for warm hospitality.
    The first author is supported by a President's Graduate Scholarship from NUS.
    The second author is supported by JSPS KAKENHI Grant Number 21J10242.
\end{ack}

\section{Preliminaries}\label{sec:preliminaries}

We refer to \cite{barth2004compact} and to \cites{grauert1984coherent,ueno1975classification}
for the knowledge about compact complex surfaces and for general theory of compact complex spaces.
We also refer to \cites{oguiso2014aspects,dinh2017equidistribution}
for an overview of topological entropy and dynamical degrees.
When $f\colon X \to X$ is a surjective holomorphic map of a compact K\"ahler manifold $X$,
a corollary of the fundamental Gromov--Yomdin theorem says that
$f$ has zero entropy if and only if the first dynamical degree of $f$ is $1$
(see, e.g., \cite{oguiso2014aspects}*{Corollary~3.8~(4)}).

In what follows, we collect some results about wild automorphisms of compact complex spaces for later use. 
We simply call a holomorphic map between complex spaces a morphism.

\subsection{Basic properties of wild automorphisms}

\begin{lemma}[cf.~\cite{oguiso2022wild}*{Lemma~2.5}]\label{lem:ppty_of_wild}
    Let \(X\) be a compact complex space and let \(\sigma\) be an automorphism on \(X\).
    \begin{enumerate}[wide = 0pt, leftmargin=*]
        \item If \(\sigma\) is wild then \(X\) is smooth.
        \item \(\sigma\) is wild if and only if \(\sigma^m\) is wild for some \(m\geq 1\)
              (and hence for all \(m\geq 1\)).
              In particular, a wild automorphism has infinite order.
    \end{enumerate}
\end{lemma}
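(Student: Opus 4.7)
The plan is to exploit the fact that both the singular locus and suitable orbit-unions are automorphism-invariant analytic subsets, so that wildness forces them to collapse to either $\emptyset$ or all of $X$. For part (1), I would observe that $\Sing(X)$ is an analytic subset preserved by every biholomorphism, so in particular $\sigma(\Sing(X)) = \Sing(X)$. Since the smooth locus is open and dense in a reduced complex space, $\Sing(X)$ is a proper analytic subset of $X$; wildness then forces $\Sing(X) = \emptyset$, so $X$ is smooth.

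For part (2), the direction ``$\sigma^m$ wild $\Rightarrow$ $\sigma$ wild'' is immediate: any non-empty $\sigma$-invariant analytic subset is in particular $\sigma^m$-invariant, and hence is all of $X$. For the converse, given a non-empty $\sigma^m$-invariant analytic subset $Z$, I would form $W := \bigcup_{i=0}^{m-1} \sigma^i(Z)$, which is a finite union of analytic subsets (hence itself analytic and non-empty), and check by a direct shift that $\sigma(W) = W$ using $\sigma^m(Z) = Z$. Wildness of $\sigma$ then gives $W = X$, and since $X$ is irreducible (which follows from (1) together with the standing assumption that $X$ is connected), one of the finitely many analytic subsets $\sigma^i(Z)$ must coincide with $X$, forcing $Z = X$.

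Finally, for the infinite-order clause, if $\sigma$ had finite order $m$ then $\sigma^m = \id$, which is manifestly not wild on any positive-dimensional space, since any single point is an invariant proper analytic subset. By the equivalence already established in (2), this is incompatible with $\sigma$ being wild, so $\sigma$ must have infinite order. The argument is essentially formal and I do not anticipate any real obstacle; the only mild subtlety worth flagging is the passage from $W = X$ to $Z = X$ in part (2), which rests on the irreducibility of $X$ rather than on anything deeper.
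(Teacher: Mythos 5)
Your proof is correct and takes essentially the same approach as the paper: for (1) the invariance of the analytic subset \(\Sing(X)\), and for (2) the \(\sigma\)-invariant union \(\bigcup_{i=0}^{m-1}\sigma^i(Z)\). The only difference is that you make explicit the passage from \(\bigcup_i\sigma^i(Z)=X\) to \(Z=X\) via irreducibility (obtained from (1) and connectedness), a point the paper's one-line proof leaves implicit.
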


\begin{proof}
    (1)
    Note that the singular locus \(\Sing X\) is an analytic subset of \(X\) and stabilised by every automorphism.
    Since \(\sigma\) acts as a wild automorphism on \(X\),
    \(\Sing X = \emptyset\) and \(X\) is smooth.

    For (2),
    if \(\sigma^m\) stabilised an analytic subset \(Z\) of \(X\),
    then \(\sigma\) stabilises the analytic subset \(\cup_{i=0}^{m-1} \sigma^i(Z)\) of \(X\).
\end{proof}

\begin{proposition}[cf.~\cite{oguiso2022wild}*{Proposition 2.6}]\label{prop:numerical_inv_of_wild}
    Let \(X\) be a compact complex space with a wild automorphism \(\sigma\).
    \begin{enumerate}[wide=0pt, leftmargin=*]
        \item The Euler--Poincar\'e characteristic \(\chi(\mathcal{O}_{X}) = 0\),
              and the topological Euler number \(e(X)=0\).
              In particular, \(X\) is not rationally connected.
        \item Let \(D\) be a Cartier divisor on \(X\) such that \(\sigma^* D \sim D\).
              Then either \(\abs{D} = \emptyset\) or \(D \sim 0\).
              In particular, the Kodaira dimension \(\kappa(X) \leq 0\);
              if \(\kappa(X) = 0\), then \(K_X \sim_{\mathbb{Q}} 0\).
        \item Suppose that \(X\) is K\"ahler and \(\kappa(X) = 0\).
              Then the Beauville--Bogomolov (minimal split) finite \'etale cover \(\widetilde{X}\) of \(X\)
              is a product of a complex torus \(T\)
              and some copies of Calabi--Yau manifolds \(C_i\) in the strict sense;
              and a positive power of \(\sigma\) lifts to a diagonal action on \(\widetilde{X} = T \times \prod_i C_i\)
              whose action on each factor is wild.
    \end{enumerate}
\end{proposition}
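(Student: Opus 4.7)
The plan is to handle the three parts in order, each time reducing to the defining property that every $\sigma$-invariant proper analytic subset of $X$ is empty. For part (1), I first observe that $\Fix(\sigma^n)$ is analytic and stable under $\sigma$, so by wildness and the infinite order of $\sigma$ (\cref{lem:ppty_of_wild}(2)) it must be empty for every $n \geq 1$. The holomorphic and topological Lefschetz fixed-point formulas then give
\[ \sum_i (-1)^i \Tr\bigl((\sigma^*)^n \mid H^i(X,\mathcal{O}_X)\bigr) = 0 = \sum_i (-1)^i \Tr\bigl((\sigma^*)^n \mid H^i(X,\QQ)\bigr) \]
for all $n \geq 1$. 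Expanding each trace as a sum of $n$-th powers of the eigenvalues of $\sigma^*$ and grouping by distinct eigenvalue, linear independence of the exponential functions $n \mapsto \lambda^n$ (a Vandermonde argument) forces the signed multiplicity of every eigenvalue to vanish. Summing these multiplicities yields $\chi(\mathcal{O}_X) = 0$ and $e(X) = 0$; since a rationally connected manifold has $\chi(\mathcal{O}) = 1$, the last assertion of (1) follows.

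For part (2), the hypothesis $\sigma^* D \sim D$ gives a linear action of $\sigma$ on the finite-dimensional space $H^0(X, \mathcal{O}_X(D))$. If $|D| \neq \emptyset$, pick an eigenvector $s \neq 0$; its zero locus $Z(s)$ is analytic and $\sigma$-invariant, hence empty by wildness, so $s$ trivialises $\mathcal{O}_X(D)$ and $D \sim 0$. Applying this to $D = mK_X$ (using that $\sigma^* K_X \sim K_X$) shows that $|mK_X| \neq \emptyset$ forces $mK_X \sim 0$, whence $\kappa(X) \leq 0$, and $\kappa(X) = 0$ forces $K_X \sim_\QQ 0$.

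For part (3), parts (1)--(2) together with the K\"ahler hypothesis produce a smooth compact K\"ahler manifold $X$ with $K_X \sim_\QQ 0$, so the K\"ahler Beauville--Bogomolov theorem furnishes a minimal finite \'etale cover $\widetilde{X} = T \times \prod_i C_i \times \prod_j H_j$ with $T$ a torus, $C_i$ strict Calabi--Yau and $H_j$ irreducible hyperk\"ahler. Canonicity of this decomposition yields a lift $\widetilde{\sigma}$ of $\sigma$ to $\widetilde{X}$ (unique modulo the deck group $G$), and a suitable positive power $\widetilde{\sigma}^N$ fixes every factor, so acts diagonally. To see $\widetilde{\sigma}^N$ is wild, from any proper invariant analytic subset $Z \subsetneq \widetilde{X}$ one produces, after enlarging $N$ so that $\widetilde{\sigma}^N$ commutes with $G$, a nonempty proper $\sigma$-invariant analytic subset of $X$ by averaging $Z$ over $G$ (either $\bigcap_g g(Z)$ or $\bigcup_g g(Z)$, whichever is both proper and nonempty), contradicting wildness. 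Wildness on $\widetilde{X}$ then descends to each factor, and part (1) forces $\chi(\mathcal{O}) = 0$ on each factor, ruling out all hyperk\"ahler factors.

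The main obstacle is the descent step in part (3): the image $\pi(Z)$ in $X$ of an invariant subset of $\widetilde{X}$ is often all of $X$, and $\bigcap_g g(Z)$ may be empty, so some care is needed to ensure the averaging trick produces something both proper and nonempty. Apart from this, the ingredients---the Lefschetz formulas, linear independence of characters, the K\"ahler Beauville--Bogomolov decomposition, and the canonicity of its factors---are classical.
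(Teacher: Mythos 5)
Your proposal is correct and follows essentially the same route as the paper: part (1) via the vanishing of all Lefschetz numbers of powers of $\sigma$ (the paper defers to Reichstein--Rogalski--Zhang for exactly this argument), part (2) via an invariant member of $\abs{D}$ (your eigenvector of the action on $H^0(X,\OO_X(D))$ is the same as the paper's fixed point of the $\sigma$-action on $\abs{D}\cong\PP^N$), and part (3) via the Beauville--Bogomolov cover with hyperk\"ahler factors excluded by $\chi(\OO)=0$. The only remark worth making is that the ``main obstacle'' you flag in (3) is not one: since $\pi\colon\widetilde{X}\to X$ is finite, $\pi(Z)$ has the same dimension as $Z$ and is therefore automatically a nonempty \emph{proper} $\sigma$-invariant analytic subset of $X$ (equivalently, $\bigcup_g g(Z)=\pi^{-1}(\pi(Z))$ always works), so no case distinction is needed.
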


\begin{proof}
    (1)
    Since both Lefschetz fixed point theorem and holomorphic Lefschetz fixed point theorem
    hold for compact complex manifolds,
    the arguments of \cite{reichstein2006projectively}*{Proposition~4.4, Remark~4.5} still hold.

    (2)
    Note that $X$ is smooth by \cref{lem:ppty_of_wild}~(1).
    So Cartier divisors and Weil divisors coincide on $X$ (see~\cite{ueno1975classification}*{Theorem~4.13}).
    Suppose that $\abs{D} \neq \emptyset$ and $D\not\sim 0$, then \(\abs{D} \cong \mathbb{P}^N\) with \(N \geq 1\).
    It follows that the action of \(\sigma\) on \(\abs{D}\) has a fixed point,
    which gives a \(\sigma\)-invariant proper analytic subset of $X$.
    This is a contradiction and proves the first claim.
    In particular, \(\kappa(X) \leq 0\).

    If \(\kappa(X) = 0\), then \(\abs{mK_X} \neq \emptyset\) for some \(m>0\)
    and there is a unique effective divisor \(E\) such that \(mK_X \sim E\).
    Then \(\sigma(E) = E\) implies that \(E=0\), which shows \(K_X \sim_{\mathbb{Q}} 0\).

    (3)
    The argument of \cite{oguiso2022wild}*{Proposition~2.6~(3)}
    using the Beauville--Bogomolov covering (cf.~\cite{beauville1983remarks}) still works.
\end{proof}

\subsection{Wild automorphisms on complex tori}

\begin{lemma}[cf.~\cite{reichstein2006projectively}*{Corollary 4.3}]\label{lem_rrz4.3}
    Let \(X\) be a complex torus,
    and suppose that \(Y \subseteq X\) is an irreducible subvariety of \(X\) such that \(Y\) admits a wild automorphism.
    Then \(Y\) is a translation of a subtorus of \(X\).
\end{lemma}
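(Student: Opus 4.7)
The plan is to combine Ueno's fibration theorem for subvarieties of complex tori with the Kodaira-dimension constraint supplied by \cref{prop:numerical_inv_of_wild}~(2).

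First, by \cref{lem:ppty_of_wild}~(1) the variety $Y$ is smooth, since it carries a wild automorphism. Translating $Y$ inside $X$ (an operation that preserves the hypothesis), I may assume $0 \in Y$, and set $B$ to be the identity component of the stabiliser $\{x \in X : Y + x = Y\}$ of $Y$ under translation. Then $B$ is a closed complex subtorus of $X$, the variety $Y$ is $B$-invariant, and $B$ acts freely on $Y$. Ueno's fibration theorem (see, e.g., \cite{ueno1975classification}) then says that the quotient map $\pi\colon Y \to Z := Y/B$ is a holomorphic principal $B$-bundle and that the induced embedding $Z \hookrightarrow X/B$ realises $Z$ as a subvariety of general type, i.e., $\kappa(Z) = \dim Z$.

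Since $K_B \sim 0$, the principal-bundle structure forces $K_Y = \pi^{\ast} K_Z$, whence
\[
\kappa(Y) \;=\; \kappa(Z) \;=\; \dim Z.
\]
On the other hand, because $Y$ admits a wild automorphism, \cref{prop:numerical_inv_of_wild}~(2) gives $\kappa(Y) \leq 0$. Combining the two I conclude $\dim Z = 0$, so $Y$ reduces to a single $B$-orbit, i.e., a translate of the subtorus $B$ of $X$. Undoing the initial translation gives the desired conclusion.

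The crux of the argument is not the final dimension count but the invocation of Ueno's theorem in the analytic setting: one needs $Z$ to be realised as a genuine analytic subvariety of the complex torus $X/B$ (not merely the image of a desingularisation) so that the general-type statement can be combined cleanly with the bound $\kappa(Y) \leq 0$ coming from wildness. Once this structural input is granted, the rest is a direct assembly of results already established in \cref{sec:preliminaries}.
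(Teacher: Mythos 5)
Your proposal is correct and follows essentially the same route as the paper: the wildness hypothesis gives $\kappa(Y) \leq 0$ via \cref{prop:numerical_inv_of_wild}~(2), and Ueno's structure theorem for subvarieties of complex tori (\cite{ueno1975classification}*{Lemma~10.1 and Theorem~10.3}) then forces $Y$ to be a translate of a subtorus. You have merely unpacked the content of Ueno's theorem (the stabiliser subtorus $B$, the fibration $Y \to Y/B$ onto a general-type subvariety of $X/B$, and the resulting identity $\kappa(Y) = \dim Y/B$), which the paper cites as a black box.
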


\begin{proof}
    We know that the Kodaira dimension \(\kappa(Y) \leq 0\) by Proposition \ref{prop:numerical_inv_of_wild} (2).
    Then a theorem of Ueno (\cite{ueno1975classification}*{Lemma~10.1 and Theorem~10.3})
    states that \(Y\) must be a translation of a subtorus of \(X\).
\end{proof}

Let \(X\) and \(Y\) be complex tori.
For any \(a \in X\) one has the \emph{translation} automorphism \(T_a \colon x \mapsto x + a\).
Let \(f\colon X \to Y\) be a morphism.
Then there is a unique (group) homomorphism \(\alpha \in \Hom(X,Y)\) such that
\(f = T_b \circ \alpha\), where \(b \in Y\).

\begin{theorem}[cf.~\cite{reichstein2006projectively}*{Theorem~7.2}]\label{thm_rrz7.2}
    Let \(X\) be a complex torus.
    Suppose that \(\sigma = T_b \circ \alpha\) is an automorphism of \(X\),
    where \(\alpha \in \End(X)\) is an automorphism and \(b \in X\).
    Let \(\beta = \alpha - \id\),
    and set \(S = \{b, \beta(b), \beta^2(b), \dots\} \subseteq X\).
    Then the following are equivalent:
    \begin{enumerate}[label=(\alph*)]
        \item \(\sigma\) is wild.
        \item \(\alpha\) is unipotent and \(S\) generates \(X\).
        \item \(\alpha\) is unipotent and the image \(\overline{b}\) of \(b\) generates \(X/\beta(X)\).
    \end{enumerate}
\end{theorem}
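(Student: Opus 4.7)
The plan is to adapt the proof of \cite{reichstein2006projectively}*{Theorem~7.2} from the abelian variety setting to complex tori. First, iteration yields $\sigma^n(x) = \alpha^n(x) + p_n$ with $p_n := (\id + \alpha + \cdots + \alpha^{n-1})(b) = \sigma^n(0)$ and $\beta(p_n) = \alpha^n(b) - b$. The binomial identities relating $\beta^i = (\alpha-\id)^i$ and $\alpha^j = (\id+\beta)^j$ show that the subgroups of $X$ generated by $S$, by $\{\alpha^j(b)\}_{j\geq 0}$, and by $\{p_n\}_{n\geq 0}$ all coincide.

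For (a) $\Rightarrow$ (b), the generation claim is immediate: by wildness the orbit $\{p_n\}$ of $0$ is Zariski dense in $X$, so any closed analytic subgroup containing $\{p_n\}$ must equal $X$, in particular the one generated by $S$. For unipotency of $\alpha$, argue contrapositively: if $\alpha$ is not unipotent, its characteristic polynomial on $\Lambda$ factors in $\mathbb{Z}[t]$ as $(t-1)^r q(t)$ with $q(1) \neq 0$ and $\deg q \geq 1$. The subspaces $V_1 = \ker(\alpha-\id)^r$ and $V_s = \ker q(\alpha)$ of the Lie algebra $V$ are complex $\alpha$-invariant (both are $J$-stable since $\alpha$ is $\mathbb{C}$-linear), and their intersections with $\Lambda$ are full-rank sublattices; hence $X_1 := V_1/(\Lambda \cap V_1)$ is a closed complex subtorus of $X$ with complex-torus quotient $X/X_1$, on which $\bar\beta$ is an isogeny by B\'ezout (since $(t-1)$ and $q(t)$ are coprime in $\mathbb{Q}[t]$). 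Then $\bar\beta(\bar y_0) = -\bar b$ admits a solution $\bar y_0 \in X/X_1$, which is $\bar\sigma$-fixed; its preimage in $X$ is a proper $\sigma$-invariant analytic subset, contradicting wildness.

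Given $\alpha$ unipotent, (b) $\Leftrightarrow$ (c) follows by reducing $S$ modulo $\beta(X)$: the elements $\beta^i(b)$ for $i \geq 1$ lie in $\beta(X)$, so the image of $S$ in $X/\beta(X)$ is $\{\bar b\}$, and a short induction on the nilpotency index of $\beta$ along $X \supseteq \beta(X) \supseteq \beta^2(X) \supseteq \cdots$ bridges ``generates the quotient'' with ``generates $X$''. For (c) $\Rightarrow$ (a): given a nonempty $\sigma$-invariant analytic $Z \subseteq X$, the induced $\bar\sigma = T_{\bar b}$ on $X/\beta(X)$ has Zariski-dense orbits by hypothesis, forcing $Z + \beta(X) = X$; an induction on the nilpotency index $k$ of $\beta$, passing to the quotient $X/\beta^{k-1}(X)$ (where $\beta$ has index $\leq k-1$) and then analysing the fibers of $X \to X/\beta^{k-1}(X)$ (cosets of the subtorus $\beta^{k-1}(X)$, on which $\alpha$ acts trivially), forces $Z = X$.

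The main obstacle is the unipotency step in (a) $\Rightarrow$ (b): in \cite{reichstein2006projectively} the isogeny decomposition of abelian varieties cleanly splits off the generalised eigenspace for eigenvalues $\neq 1$ as an abelian subvariety, whereas a general complex torus admits no such decomposition. We bypass this by verifying that the $\mathbb{Q}$-linear primary decomposition of $\Lambda_\mathbb{Q}$ with respect to the factorisation $(t-1)^r q(t)$ descends — via $J$-stability and the resulting full-rank sublattices — to a genuine short exact sequence $0 \to X_1 \to X \to X/X_1 \to 0$ of complex tori, permitting the B\'ezout-based construction to produce a $\bar\sigma$-fixed point on the quotient and hence a proper invariant analytic subset of $X$.
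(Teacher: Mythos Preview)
Your treatment of (a) $\Rightarrow$ (b) and (b) $\Leftrightarrow$ (c) is correct and in line with the paper, which simply asserts that the argument of \cite{reichstein2006projectively}*{Theorem~7.2} carries over. Note, however, that the paper singles out only (b),(c) $\Rightarrow$ (a) as needing a change (replacing \cite{reichstein2006projectively}*{Corollary~4.3} by \cref{lem_rrz4.3}); your lattice-level primary decomposition for the unipotency step uses nothing beyond $\alpha(\Lambda)=\Lambda$ and the $J$-linearity of $\alpha$, both of which hold for arbitrary complex tori, so the ``main obstacle'' you flag is not where the adaptation is actually required.

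The genuine gap is in your (c) $\Rightarrow$ (a). Your induction yields $Z+T=X$ with $T=\beta^{k-1}(X)$, and $\alpha|_T=\id$ indeed makes $\sigma$ commute with $T$-translations; but this alone does \emph{not} force $Z=X$. A priori $Z$ could be a proper multisection of $X\to X/T$, and ``analysing the fibers'' amounts to showing that the cocycle $\bar x\mapsto \sigma(s(\bar x))-s(\bar\sigma(\bar x))\in T$ (for a local section $s$) is not cohomologous to a constant---a nontrivial step you have not supplied. The paper's route (following \cite{reichstein2006projectively}) avoids this entirely: take a \emph{minimal} nonempty closed $\sigma$-invariant $Z$; minimality makes $\sigma|_Z$ wild and $Z$ irreducible, so \cref{lem_rrz4.3} (via Ueno's theorem) gives $Z=z_0+T'$ for some subtorus $T'$ with $\alpha(T')=T'$. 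On $X/T'$ the point $\bar z_0$ is $\bar\sigma$-fixed, hence $\bar b=-\bar\beta(\bar z_0)$ and all of $\bar S$ lies in the proper subtorus $\bar\beta(X/T')$, contradicting (b) unless $T'=X$. You should either invoke \cref{lem_rrz4.3} in this way, or supply the missing argument for your fiber step.
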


\begin{proof}
    The argument of \cite{reichstein2006projectively}*{Theorem~7.2} is still available under our setting.
    Notice that for \((b, c) \Rightarrow (a)\),
    one may use \cref{lem_rrz4.3} instead of \cite{reichstein2006projectively}*{Corollary~4.3}.
\end{proof}

\begin{proposition}[cf.~\cite{oguiso2022wild}*{Lemma~2.5 (6)}]\label{prop:entropy_tori}
    Let \(X\) be a complex torus and let \(\sigma\) be a wild automorphism on \(X\).
    Then \(\sigma\) has zero entropy.
\end{proposition}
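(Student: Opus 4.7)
The plan is to combine \cref{thm_rrz7.2} with the Gromov--Yomdin theorem recalled at the start of \cref{sec:preliminaries}. Since \(\sigma\) is wild, \cref{thm_rrz7.2} lets us write \(\sigma = T_b \circ \alpha\) where \(\alpha \in \Aut(X)\) is a unipotent group automorphism. Realising \(X\) as \(V/\Lambda\) with \(V \simeq \CC^{\dim X}\) its universal cover, \(\alpha\) lifts to a unipotent \(\CC\)-linear automorphism of \(V\) preserving \(\Lambda\).

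The key observation is that translations act trivially on singular cohomology, so the induced action \(\sigma^*\) on \(H^*(X,\CC)\) coincides with \(\alpha^*\). Since \(H^*(X,\CC)\) is canonically identified with the exterior algebra on \(H^1(X,\CC) \simeq \Hom(\Lambda,\CC)\), and \(\alpha^*\) acts on \(H^1(X,\CC)\) as the dual of the unipotent action induced by \(\alpha\) on \(\Lambda \otimes \CC\), the full action of \(\sigma^* = \alpha^*\) on \(H^*(X,\CC)\) is unipotent. In particular, its spectral radius equals \(1\). Since the complex torus \(X\) is K\"ahler, the Gromov--Yomdin theorem then yields
\[
    h(\sigma) \;=\; \log \rho\bigl(\sigma^*|_{H^*(X,\CC)}\bigr) \;=\; 0,
\]
as required.

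There is no serious obstacle in this argument, as the substantive content has been packaged into \cref{thm_rrz7.2}; the only point requiring care is verifying that the translational part of \(\sigma\) is invisible to the dynamics on cohomology, which follows from the fact that any translation on \(X\) is homotopic to the identity.
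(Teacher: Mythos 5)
Your proof is correct and follows essentially the same route as the paper: decompose \(\sigma = T_b \circ \alpha\) via \cref{thm_rrz7.2}, observe that the translation part acts trivially on cohomology and that the unipotency of \(\alpha\) in \(\End(X)\) forces unipotency of \(\alpha^*\) on \(H^1(X,\CC)\) and hence on the whole exterior algebra \(H^*(X,\CC)\), then conclude by Gromov--Yomdin. The only cosmetic difference is that you verify the unipotency of the cohomological action directly through the universal cover, where the paper cites \cite{reichstein2006projectively}*{Lemma 8.2 (c)} for the same fact.
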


\begin{proof}
    Let \(n = \dim X\).
    Note that \(H^j(X,\mathbb{C}) \simeq \wedge^j H^1(X,\mathbb{C})\) for \(1 \leq j \leq 2n\) since \(X\) is a complex torus.
    By \cref{thm_rrz7.2}, we can write \(\sigma = T_b \circ \alpha\) for some translation \(T_b\)
    and unipotent \(\alpha \in \End(X)\).
    Clearly \(T_b\) acts on \(H^1(X, \CC)\) and hence on each \(H^j(X, \CC)\) as an identity.
    We claim that the action of the unipotent \(\alpha \in \End(X)\) on \(H^1(X,\mathbb{C})\) is also unipotent (cf.~\cite{reichstein2006projectively}*{Theorem 8.5 (a)}).
    In fact, \(\End(X)_{\QQ} \coloneqq \End(X) \otimes \QQ\) is contained in \(M_{2n}(\QQ)\),
    the \(\QQ\)-vector space of matrices of rank \(2n\) with rational entries.
    So the claim follows from \cite{reichstein2006projectively}*{Lemma 8.2 (c)}.
    Then the actions of \(\alpha\) on \(H^{2k}(X,\mathbb{C})\) are all unipotent and hence the dynamical degrees \(d_k(\alpha)\) are all equal to \(1\) for \(1 \leq k \leq n\). Therefore, \(d_k(\sigma) = 1\) and \(\sigma\) has zero entropy (cf.~\cite{oguiso2014aspects}*{Corollary~3.8~(4)}).
\end{proof}

A compact complex space \(X\) is called a \emph{\textit{Q}-torus} if it has a
complex torus \(T_1\) as an \'etale finite cover; or equivalently, it is the quotient of a complex torus \(T_2\)
by a finite group acting freely on \(T_2\).

\begin{proposition}[cf.~\cite{oguiso2022wild}*{Proposition~2.13}]\label{prop:qtorus}
    Let \(X\) be a \textit{Q}-torus with a wild automorphism \(\sigma\).
    Then \(X\) is a complex torus.
\end{proposition}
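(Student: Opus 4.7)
The plan is to lift $\sigma$ to a wild automorphism of a torus cover of $X$ and then combine \cref{thm_rrz7.2} with the fact that the lift commutes with the deck transformations to force those deck transformations to be translations. First, starting from the finite \'etale cover $T_0 \to X$ with $T_0$ a complex torus given by the definition of a \textit{Q}-torus, I will pass to the Galois closure to obtain a finite \'etale Galois cover $\pi\colon \widetilde{T} \to X$ with deck group $G$ acting freely; here $\widetilde{T}$ is still a complex torus because any finite \'etale cover of a complex torus is a complex torus. Since $\pi_1(X)$ has only finitely many subgroups of index $|G|$, some power $\sigma^N$ preserves $\pi_1(\widetilde{T})$ and therefore lifts to an automorphism $\widetilde{\sigma}$ of $\widetilde{T}$; this lift automatically normalises $G$, and since $\Aut(G)$ is finite, a further power of $\widetilde{\sigma}$ centralises $G$. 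The corresponding power of $\sigma$ is still wild by \cref{lem:ppty_of_wild}~(2), so after these reductions it suffices to treat the case where $\widetilde{\sigma}$ lifts $\sigma$ and commutes with every element of $G$.

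The main obstacle is then to verify that $\widetilde{\sigma}$ itself is wild on $\widetilde{T}$. For this I take a non-empty proper $\widetilde{\sigma}$-invariant analytic subset $Z \subsetneq \widetilde{T}$ and form the finite union $Z' := \bigcup_{g \in G} g(Z)$, which is $G$-invariant by construction and $\widetilde{\sigma}$-invariant because $\widetilde{\sigma}$ normalises $G$; consequently $\pi(Z')$ is a $\sigma$-invariant analytic subset of $X$, and wildness of $\sigma$ forces $\pi(Z') = X$. Finiteness of $\pi$ then yields $\dim \pi(Z') = \dim Z' = \dim Z < \dim \widetilde{T} = \dim X$, a contradiction.

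Once $\widetilde{\sigma}$ is known to be wild, I will apply \cref{thm_rrz7.2} to write $\widetilde{\sigma} = T_b \circ \alpha$ with $\alpha$ unipotent and $\overline{b}$ a topological generator of the quotient torus $\widetilde{T}/\beta(\widetilde{T})$, where $\beta = \alpha - \id$. Writing each $g \in G$ as $g = T_{v_g} \circ A_g$, the commutation $\widetilde{\sigma} g = g \widetilde{\sigma}$ gives $\alpha A_g = A_g \alpha$ (so $A_g$ commutes with $\beta$ and preserves $\beta(\widetilde{T})$) together with a translation identity equivalent to $(A_g - \id)(b) \in \beta(\widetilde{T})$. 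The induced automorphism of $\widetilde{T}/\beta(\widetilde{T})$ thus fixes the topological generator $\overline{b}$ and hence acts as the identity on the whole quotient torus; lifting this statement to the universal cover $V$ of $\widetilde{T}$ reads $(A_g - \id)(V) \subseteq \beta(V)$. An induction on $k$ using $A_g \beta = \beta A_g$ upgrades the inclusion to $(A_g - \id)^k(V) \subseteq \beta^k(V)$ for every $k$, so the nilpotence of $\beta$ forces $A_g$ to be unipotent; combined with the finite order of $A_g$ inherited from $g$, this gives $A_g = \id$. Therefore every element of $G$ is a translation, and $X = \widetilde{T}/G$ is itself a complex torus.
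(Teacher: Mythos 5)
Your proof is correct, and at the skeleton level it follows the same strategy as the paper (lift $\sigma$ to a torus cover, pass to a power that centralises the deck group, show the deck transformations are translations), but it differs in two substantive ways. First, the paper takes the Beauville minimal splitting cover $T\to X$ and, once the deck group $H$ is known to consist of translations, invokes minimality to conclude $H=\{\id\}$; you instead take the Galois closure of an arbitrary torus cover and conclude directly that a free quotient of a torus by translations is again a torus. Both closing moves are fine. Second, and more interestingly, the paper cites Kirson's lemma (\cref{lem:centraliser_translation}) as a black box for the statement that a finite subgroup centralised by a wild automorphism consists of translations, whereas you re-derive it from \cref{thm_rrz7.2}: the identity $(A_g-\id)(b)=\beta(v_g)$ forces the linear part $A_g$ to act trivially on $\widetilde{T}/\beta(\widetilde{T})$, and the commutation $A_g\beta=\beta A_g$ bootstraps $(A_g-\id)^k(V)\subseteq\beta^k(V)$, so $A_g$ is unipotent of finite order, hence trivial. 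This is essentially the content of the cited lemma, so your writeup is more self-contained. You also explicitly verify that the lifted automorphism is wild on the covering torus (via the $G$-orbit $Z'=\bigcup_g g(Z)$ and a dimension count through the finite map $\pi$), a point the paper's proof uses implicitly without comment; supplying it is a genuine improvement in completeness. One cosmetic remark: the set $\pi(Z')$ is invariant under $\sigma^N$ (the power that lifts), not necessarily under $\sigma$ itself, but since $\sigma^N$ is wild by \cref{lem:ppty_of_wild}~(2) the contradiction goes through unchanged.
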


\begin{proof}
    Let \(T \longrightarrow X\) be the minimal splitting cover of \(X\)
    in \cite{beauville1983remarks}*{Section 3}.
    The \(\sigma\) lifts to an automorphism on \(T\),
    also denoted as \(\sigma\).
    Note that the \(\sigma\) on \(T\) normalises \(H\coloneqq \Gal(T/X)\).
    Hence \(\sigma^{r!}\) centralises every element of \(H\),
    where \(r\coloneqq \abs{H}\).
    Since \(\sigma^{r!}\) is still wild by \cref{lem:ppty_of_wild} (2),
    \(H\) consists of translations by \cref{lem:centraliser_translation} below.
    Hence \(H = \{\id_T\}\) by the minimality of \(T\longrightarrow X\).
    Therefore, \(X=T\) and \(X\) is a complex torus.
\end{proof}

\begin{lemma}[cf.~\cite{kirson2010wild}*{Lemma~2.8}]\label{lem:centraliser_translation}
    Let \(\sigma\) be a wild automorphism of a complex torus \(T\).
    Let \(H \leq \Aut(T)\) be a finite subgroup centralised by \(\sigma\):
    \(\sigma h = h \sigma\) for all \(h \in H\).
    Then \(H\) consists of translations of \(T\).
\end{lemma}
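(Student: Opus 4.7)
The plan is to apply Theorem~\ref{thm_rrz7.2} to $\sigma$, writing $\sigma = T_b \circ \alpha$ with $\alpha \in \End(T)$ unipotent, and simultaneously to write each $h \in H$ in the canonical form $h = T_c \circ \gamma$ with $\gamma \in \Aut(T)$ the linear part. Since $h$ has finite order in the finite group $H$, the linear part $\gamma$ also has finite order, hence is semisimple on $V := \Lambda \otimes \mathbb{Q}$ (with $\Lambda = H_1(T,\mathbb{Z})$). Unwinding the commutation $\sigma h = h \sigma$ yields two constraints: $\alpha \gamma = \gamma \alpha$ as endomorphisms of $T$, together with the identity $(\gamma - \id)b = (\alpha - \id)c$. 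Setting $\beta := \alpha - \id$, which is nilpotent since $\alpha$ is unipotent, the relation becomes $(\gamma - \id)b = \beta c$, and this will be the main engine of the proof.

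My aim is then to show $\gamma = \id$, which will identify $h$ with the translation $T_c$. I would decompose $V = V^\gamma \oplus W$, where $W = \bigoplus_{\lambda \neq 1} V_\lambda$ is the complement of the $\gamma$-fixed part in the $\gamma$-spectral decomposition; both summands are $\beta$-stable because $\beta$ commutes with $\gamma$. Quotienting $T$ by the connected $\gamma$-fixed subtorus $T_0^\gamma = V^\gamma/(V^\gamma \cap \Lambda)$ produces a quotient torus $T_W$ to which both $\gamma$ and $\beta$ descend, and on $T_W$ the morphism $\gamma - \id$ is an isogeny (since $\gamma - \id$ has no kernel on $W$). Projecting the relation $(\gamma - \id)b = \beta c$ to $T_W$ and choosing a preimage $c'_W$ with $(\gamma - \id)c'_W = c_W$ will give $(\gamma - \id)(b_W - \beta c'_W) = 0$, so $b_W \equiv \beta c'_W$ modulo the finite kernel of $\gamma - \id$ on $T_W$; in particular the image of $b_W$ in $T_W/\beta(T_W)$ is a torsion element.

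To conclude, I would use Theorem~\ref{thm_rrz7.2}(c): the image $\overline{b}$ topologically generates $T/\beta(T)$. The quotient $T \twoheadrightarrow T_W$ descends to a surjection $T/\beta(T) \twoheadrightarrow T_W/\beta(T_W)$, so $\overline{b_W}$ topologically generates $T_W/\beta(T_W)$. Combined with the previous step, the complex torus $T_W/\beta(T_W)$ is topologically generated by a torsion element, forcing it to be trivial; hence $T_W = \beta(T_W)$. Iterating this identity and invoking the nilpotency of $\beta$ (so that $\beta^k$ is the zero endomorphism of $T_W$ for $k$ sufficiently large) yields $T_W = 0$, i.e.\ $W = 0$, $\gamma = \id$, and thus $h = T_c$ is a translation.

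The main technical difficulty I anticipate lies in the descent step: verifying that the $\mathbb{Q}$-rational decomposition $V = V^\gamma \oplus W$ yields a genuine quotient torus $T_W$ despite the lattice $\Lambda$ not splitting, that the relation $(\gamma - \id)b = \beta c$ continues to hold in $T_W$ as an equation in the torus (not merely up to a lattice correction), and that the ``generates $T/\beta(T)$'' property transfers cleanly through the induced surjection to $T_W/\beta(T_W)$. Once these are set up, the remainder is a routine combination of the nilpotency of $\beta$ with the elementary fact that a positive-dimensional complex torus cannot be topologically generated by a torsion element.
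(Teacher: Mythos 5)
Your argument is correct, and it supplies in full detail exactly the proof that the paper delegates to its citation of Kirson's Lemma~2.8: writing $\sigma=T_b\circ\alpha$ and $h=T_c\circ\gamma$, extracting $\alpha\gamma=\gamma\alpha$ and $(\gamma-\id)b=\beta c$ from the commutation, and combining \cref{thm_rrz7.2}~(c) with the nilpotency of $\beta$ and the semisimplicity of the finite-order $\gamma$ to force $\gamma=\id$. The descent steps you flag do all go through as described (the $\gamma$-fixed subtorus is a genuine subtorus because $\gamma$ acts $\mathbb{C}$-linearly on the universal cover, $\gamma-\id$ is an isogeny of $T_W$, and a complex torus generated in the analytic Zariski topology by a torsion point is trivial), though you could bypass $T_W$ entirely: since $(\gamma-\id)b=\beta c\in\beta(T)$, the automorphism $\gamma$ descends to $T/\beta(T)$ fixing the generator $\overline{b}$, hence is the identity there, so $\im(\gamma-\id)\subseteq\im(\beta)$ on $\Lambda\otimes\mathbb{Q}$, and your eigenspace decomposition together with the nilpotency of $\beta$ finishes.
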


\begin{proof}
    The argument of \cite{kirson2010wild}*{Lemma~2.8}
    is still valid for complex tori.
\end{proof}

\subsection{Wild automorphisms and fibrations}

\begin{lemma}[cf.~\cite{oguiso2022wild}*{Lemma~2.5}]\label{lem:ppty_of_wild2}
    Let \(X\) be a compact complex space and let \(\sigma\) be an automorphism on \(X\).
    \begin{enumerate}[wide = 0pt, leftmargin=*]
        \item Suppose that \(\sigma\) is wild and \(f\colon X\longrightarrow Y\)
              (resp.\ \(g\colon W\longrightarrow X\) with \(g(\Sing(W))\neq X\))
              is a \(\sigma\)-equivariant surjective morphism of compact complex spaces.
              Then \(f\) (resp.\ \(g\)) is a smooth morphism.
        \item Suppose that \(f\colon X\longrightarrow Y\) is a \(\sigma\)-equivariant
              surjective morphism to a compact complex space \(Y\).
              If the action \(\sigma\) on \(X\) is wild then so is the action of \(\sigma\) on \(Y\)
              (and hence \(Y\) is smooth).
        \item Suppose that \(f\colon X\longrightarrow Y\) is a \(\sigma\)-equivariant generically finite
              surjective morphism of compact complex spaces.
              Then the action of \(\sigma\) on \(X\) is wild if and only if so is the action of \(\sigma\) on \(Y\).
              Further, if this is the case,
              then \(f\colon X\longrightarrow Y\) is a finite \'etale morphism,
              and in particular, it is an isomorphism when \(f\) is bimeromorphic.
    \end{enumerate}
\end{lemma}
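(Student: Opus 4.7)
The plan is to dispatch the parts in the order (2), (1), (3), since (1) requires both source and target to be smooth, which is in turn obtained from wildness via \cref{lem:ppty_of_wild} (1). For part (2), given any non-empty $\sigma$-invariant analytic subset $Z \subseteq Y$, the preimage $f^{-1}(Z)$ is non-empty (by surjectivity of $f$), $\sigma$-invariant (by equivariance), and analytic; wildness on $X$ forces $f^{-1}(Z) = X$, whence $Z = Y$ again by surjectivity. So $\sigma$ is wild on $Y$, and $Y$ is smooth by \cref{lem:ppty_of_wild} (1).

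For part (1), $X$ and $Y$ are now both smooth. The critical locus of $f$ — the closed analytic subset of $X$ where $df$ fails to be surjective — is $\sigma$-invariant by equivariance, and proper in $X$ by generic smoothness in the analytic category (otherwise the image $f(X)$ could not have dimension $\dim Y$, contradicting surjectivity). Wildness then forces it empty, so $f$ is smooth. For the morphism $g\colon W \to X$, the image $g(\Sing W)$ is an analytic subset of $X$ by Remmert's proper-mapping theorem (applied to the compact $W$), $\sigma$-invariant (since $\sigma$ preserves $\Sing W$ and $g$ is equivariant), and proper by hypothesis, hence empty by wildness on $X$; so $W$ is smooth. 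The same generic-smoothness argument, applied now to the $\sigma$-invariant analytic proper subset of $X$ obtained as the image of the critical locus of $g$, then shows $g$ to be smooth.

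For part (3), one direction is exactly (2). For the converse, I would first observe that $X$ is irreducible: $X$ is smooth by \cref{lem:ppty_of_wild} (1), hence a disjoint union of smooth connected components which $\sigma$ permutes; a positive power $\sigma^m$ stabilises each component, and wildness of $\sigma^m$ via \cref{lem:ppty_of_wild} (2) leaves only one component. Then for any non-empty $\sigma$-invariant analytic $Z \subseteq X$, the image $f(Z)$ is a non-empty $\sigma$-invariant analytic subset of $Y$ (Remmert), so equals $Y$ by wildness; since $f$ is generically finite we have $\dim X = \dim Y = \dim f(Z) \leq \dim Z \leq \dim X$, forcing $Z = X$ by irreducibility. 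That $f$ is finite \'etale then follows from smoothness (part (1)) together with generic finiteness; and a bimeromorphic finite \'etale map between connected manifolds has degree one, hence is an isomorphism.

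The main obstacle I expect is verifying that each ``bad locus'' invoked — the critical locus of $f$, the set $g(\Sing W)$, the image in $X$ of the critical locus of $g$, and the image $f(Z)$ — is indeed a proper $\sigma$-invariant analytic subset of the appropriate ambient space, so that wildness of $\sigma$ can be applied; this rests on Remmert's proper-mapping theorem, generic smoothness in the analytic category, and the built-in hypothesis $g(\Sing W) \neq X$ in the second half of (1).
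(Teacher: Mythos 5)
Your parts (1) and (2) are sound. Part (2) is the paper's argument in contrapositive form. For part (1) the paper proceeds differently: it never looks at the Jacobian, but instead shows that the non-flat locus of \(f\) is analytic (Frisch) and \(\sigma\)-stable, hence empty, and then that the locus where the fibres of \(f\) are singular is analytic and \(\sigma\)-stable, hence empty; this has the minor advantage of not requiring \(Y\) to be known smooth beforehand. Your route -- the critical locus is analytic, proper by Sard/generic smoothness, \(\sigma\)-invariant, hence empty -- is equivalent once \(Y\) is smooth, which you correctly obtain from (2) first; and for \(g\colon W\to X\) you correctly push the bad loci forward to \(X\) by Remmert before invoking wildness. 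So the difference in (1) is essentially cosmetic.

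The genuine gap is in the converse direction of part (3). There you assume \(\sigma_Y\) is wild and must prove \(\sigma_X\) is wild, but your preliminary observation that \(X\) is irreducible is obtained by applying \cref{lem:ppty_of_wild}~(1) and (2) to the action of \(\sigma\) on \(X\), i.e.\ by using wildness of \(\sigma\) on \(X\) -- exactly the conclusion you are trying to reach. This is circular. Nor can irreducibility of \(X\) be deduced from wildness on \(Y\): take \(X\) to be two disjoint copies of \(Y\), let \(\sigma\) swap the copies while acting as a wild \(\sigma_Y\) on each, and let \(f\) be the obvious \(2{:}1\) map; then \(\sigma_Y\) is wild, yet \(\sigma\) on \(X\) is not, since \(\sigma^2\) preserves each copy (cf.\ \cref{lem:ppty_of_wild}~(2)). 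So the ``if'' direction requires irreducibility (or at least connectedness) of \(X\) as a standing hypothesis -- implicit in the paper, which dismisses (3) as ``similar to (1) and (2)'' and only ever applies it to irreducible spaces. With that hypothesis granted rather than ``derived,'' your dimension count \(\dim Z \ge \dim f(Z) = \dim Y = \dim X\) does force \(Z = X\), and the remainder of (3) (smooth plus generically finite gives finite \'etale; bimeromorphic plus finite \'etale gives an isomorphism) is fine.
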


\begin{proof}
    (1)
    Let \(X_1\subseteq X\) be the subset consisting of points \(x \in X\) such that
    \(f\colon X\longrightarrow Y\) is not flat at \(x\).
    Then \(X_1\) is an analytic subset of \(X\) (cf.~\cite{frisch1967points}*{Th\'eor\`em~(\Romannum{6},~9)})
    and \(\sigma\)-stable.
    Hence \(X_1 = \emptyset\) and \(f\) is flat.
    Let \(X_2\) be the subset consisting of points \(x \in X\) such that \(X_{f(x)}\) is not smooth at \(x\).
    Then \(X_2\) is an analytic subset of \(X\) and \(\sigma\)-stable.
    Hence \(X_2 = \emptyset\) and \(f\) is smooth.

    The case of \(g\) is similar.
    Consider the subsets of \(W\) over which \(g\) is non-flat or singular.
    Then each of them has an analytic image in \(X\) by Remmert's proper mapping theorem
    (see e.g., \cite{grauert1984coherent}*{Page~213}),
    and is different from \(X\) by \cite{frisch1967points}*{Proposition~(\Romannum{6},~14)}
    or our additional assumption that \(g(\Sing(W)) \neq X\).

    (2)
    Suppose the induced automorphism $\sigma_Y$ on $Y$ is not wild.
    Then there is a proper analytic subset $Y_1\subsetneq Y$ satisfying $\sigma_Y(Y_1) = Y_1$.
    So $f^{-1}(Y_1)\subsetneq X$ is a proper analytic subset satisfying $\sigma(f^{-1}(Y_1)) = f^{-1}(Y_1)$,
    which is a contradiction.

    (3) is similar to (1) and (2).
\end{proof}

\begin{lemma}[cf.~\cite{oguiso2022wild}*{Lemma 2.7}]\label{lem:oz2.7}
    Let \(X\) be a compact complex manifold with a wild automorphism \(\sigma\),
    let \(T\) be a complex torus and let \(f\colon X\longrightarrow T\) be a \(\sigma\)-equivariant
    morphism.
    Then the image \(Y\coloneqq f(X)\) is a subtorus of \(T\) and
    \(f\colon X\longrightarrow Y\) is a smooth surjective morphism.
    In particular, the Albanese map \(\alb_X\colon X\longrightarrow T=\Alb(X)\)
    is a surjective smooth morphism with connected fibres.
\end{lemma}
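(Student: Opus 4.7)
The plan is first to show the image $Y = f(X)$ is a (translate of a) subtorus of $T$, and then to upgrade this in the Albanese case to surjectivity and connected fibres. By Remmert's proper mapping theorem, $Y$ is a closed analytic subset of $T$; moreover, the $\sigma$-equivariance provides an automorphism $\tau$ of $T$ with $f\circ\sigma = \tau\circ f$, so $\tau(Y) = Y$. \Cref{lem:ppty_of_wild2}~(2) applied to the surjective $\sigma$-equivariant morphism $f\colon X\to Y$ then shows that the induced action of $\tau$ on $Y$ is wild. Since $X$ is connected, so is $Y$, and \cref{lem_rrz4.3} forces $Y$ to be a translate of a subtorus of $T$, which we identify with a subtorus after translation. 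Smoothness of $f\colon X\to Y$ is now immediate from \cref{lem:ppty_of_wild2}~(1) applied to the same $\sigma$-equivariant surjection.

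For the Albanese map $\alb_X$, $\sigma$-equivariance is automatic from functoriality of $\Alb(X)$, so the first part gives that the image is a subtorus of $\Alb(X)$; but the image of $\alb_X$ always generates $\Alb(X)$ as a group by the universal property, so this subtorus must be all of $\Alb(X)$, yielding surjectivity and smoothness of $\alb_X$.

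I expect the subtlest point to be connected fibres. To handle it I pass to the Stein factorization $\alb_X\colon X\xrightarrow{h} Z\xrightarrow{g}\Alb(X)$. By its functoriality, the $\sigma$-action lifts to $Z$ making both $h$ and $g$ equivariant; \cref{lem:ppty_of_wild2}~(2) shows $\sigma$ is wild on $Z$, and \cref{lem:ppty_of_wild2}~(3) applied to the generically finite morphism $g$ shows that $g$ is finite \'etale. A finite \'etale cover of a complex torus is again a complex torus (its universal cover is still $\mathbb{C}^n$), so $Z$ is a complex torus. The universal property of $\Alb(X)$ applied to $h\colon X\to Z$ then produces a morphism $u\colon \Alb(X)\to Z$ with $u\circ\alb_X = h$, whence $(g\circ u)\circ\alb_X = \alb_X$, and the uniqueness in the universal property forces $g\circ u = \id_{\Alb(X)}$. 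Since $g$ is \'etale and $Z$ is connected, such a section must be an isomorphism, so $g$ is an isomorphism and $\alb_X = h$ has connected fibres.
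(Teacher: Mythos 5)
Your proposal is correct and follows essentially the same route as the paper's own proof: both reduce the structure of the image to \cref{lem_rrz4.3} (with wildness on the image supplied by \cref{lem:ppty_of_wild2}~(2)), get smoothness from \cref{lem:ppty_of_wild2}~(1), and handle connected fibres via the Stein factorisation together with \cref{lem:ppty_of_wild2}~(3) and the universal property of the Albanese map. Your explicit construction of the section \(u\) of the \'etale map \(g\) just spells out what the paper compresses into ``by the universality of the Albanese morphism.''
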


\begin{proof}
    By \cref{lem_rrz4.3}, after choosing a new origin for \(T\),
    the subvariety \(Y\) is a subtorus of \(T\).
    It follows from \cref{lem:ppty_of_wild2}~(1) that the surjective morphism \(X \to Y = f(X)\) is smooth.

    For the last assertion,
    note that \(\alb_X\colon X\longrightarrow \Alb(X)\) is \(\Aut(X)\)- and hence \(\sigma\)-equivariant,
    and \(\Alb(X)\) is generated by the image \(\alb_X(X)\) (which is a complex torus),
    so \(\alb_X(X) = \Alb(X)\).
    Let \(g\colon X\longrightarrow W\) be the Stein factorisation of \(\alb_X\).
    Since \(\alb_X\) is \(\sigma\)-equivariant, so is \(g\).
    By \cref{lem:ppty_of_wild2}, the induced morphisms \(\sigma_W\) and \(\sigma_{\Alb(X)}\) are both wild,
    \(W\) is smooth,
    and the induced morphism \(X\longrightarrow W\) is smooth while \(W\longrightarrow \Alb(X)\) is \'etale.
    Hence \(W\) is also a complex torus.
    In fact, the surjective \'etale morphism \(W\longrightarrow \Alb(X)\) is an isomorphism
    by the universality of the Albanese morphism.
    This completes the proof of the Lemma.
\end{proof}

\begin{proposition}[{cf. \cite{oguiso2022wild}*{Proposition~2.15}}]\label{prop_oz2.15}
    Let \(X\) be a compact K\"{a}hler space of dimension \(n\) with a wild automorphism \(\sigma\).
    Then the Albanese morphism \(\alb_X\colon X \longrightarrow \Alb(X)\)
    is a smooth surjective morphism
    with connected fibres and hence \(0 \leq q(X) = \dim \Alb(X) \leq n\).
    Moreover, if \(q(X) = n\), then \(X = \Alb(X)\), a complex torus;
    if \(q(X) \geq n - 1\), then \(\sigma\) has zero entropy.
\end{proposition}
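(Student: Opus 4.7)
My plan is to combine \cref{lem:oz2.7}, \cref{lem:ppty_of_wild2}~(2) and \cref{prop:entropy_tori} with the product formula for dynamical degrees of Dinh--Nguyen. First, \cref{lem:ppty_of_wild}~(1) forces $X$ to be smooth, so the Albanese morphism $\alb_X\colon X\to\Alb(X)$ exists (as $X$ is K\"ahler) and is $\sigma$-equivariant by functoriality. Applying \cref{lem:oz2.7} then yields at once that $\alb_X$ is smooth, surjective and has connected fibres; the bound $0 \le q(X) = \dim\Alb(X) \le n$ is a dimension count.

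If $q(X) = n$, then $\alb_X$ is a smooth surjective morphism of compact complex manifolds of equal dimension with connected fibres, so its fibres are single points, making $\alb_X$ bijective and \'etale, hence an isomorphism. Thus $X\cong\Alb(X)$ is a complex torus, and \cref{prop:entropy_tori} gives zero entropy of $\sigma$.

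The substantive case is $q(X) = n-1$. Set $T = \Alb(X)$, and let $\sigma_T \in \Aut(T)$ be the induced automorphism. By \cref{lem:ppty_of_wild2}~(2), $\sigma_T$ is wild, and \cref{prop:entropy_tori} (together with the unipotency observation in its proof) gives $d_i(\sigma_T) = 1$ for every $i$. The fibres of $\alb_X$ are smooth compact connected complex curves, and $\sigma$ sends each fibre isomorphically onto another. I would then invoke the Dinh--Nguyen product formula for relative dynamical degrees in the compact K\"ahler setting,
\[
d_k(\sigma) \;=\; \max_{\substack{i+j = k\\ 0\le i \le n-1,\ 0\le j\le 1}} d_i(\sigma_T)\,d_j(\sigma \mid \alb_X),
\]
where $d_j(\sigma \mid \alb_X)$ denotes the $j$-th relative dynamical degree. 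Since $\sigma$ restricts to isomorphisms between smooth compact $1$-dimensional fibres, the relative top degree $d_1(\sigma \mid \alb_X) = 1$; combined with $d_i(\sigma_T) = 1$ this yields $d_1(\sigma) = 1$, and the Gromov--Yomdin theorem recalled at the start of \cref{sec:preliminaries} concludes that $\sigma$ has zero topological entropy.

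The step I expect to be the main obstacle is the last one: ensuring that the product formula for relative dynamical degrees, together with the evaluation $d_1(\sigma\mid\alb_X) = 1$, is available in the full compact K\"ahler (rather than projective) category. The projective argument of \cite{oguiso2022wild} may substitute algebraic intersection-theoretic manipulations that are not directly available here; the K\"ahler analogue must be handled using the analytic formalism of relative dynamical degrees, in line with the ``new analytic ingredients'' the authors emphasise in the introduction.
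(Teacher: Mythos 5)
Your proposal is correct and follows essentially the same route as the paper: the authors likewise reduce to \cite{oguiso2022wild}*{Proposition~2.15}, replacing its Lemmas~2.7 and 2.5~(6) by \cref{lem:oz2.7} and \cref{prop:entropy_tori}, and they address your anticipated obstacle exactly as you suggest, by noting that the product formula for relative dynamical degrees holds for compact K\"ahler manifolds by \cite{dinh2012relative}*{Theorem~1.1}.
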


\begin{proof}
    The same proof of \cite{oguiso2022wild}*{Proposition~2.15} works under our setting.
    Notice that the product formula used there holds for compact K\"ahler manifolds (see
    \cite{dinh2012relative}*{Theorem 1.1}).
    Moreover, we use Lemma \ref{lem:oz2.7} and Proposition \ref{prop:entropy_tori}
    instead of \cite{oguiso2022wild}*{Lemmas~2.7 and 2.5~(6)}
\end{proof}

A \emph{maximal rational connected (MRC) fibration} on a uniruled compact K\"ahler manifold
has general fibres \(F\) rationally connected,
and the base is not uniruled (cf.~\cite{cao2020rational}*{Remark~6.10}).

\begin{lemma}[cf.~\cite{oguiso2022wild}*{Lemma 2.8}]\label{lem:mrc}
    Let \(X\) be a uniruled compact K\"ahler manifold of dimension \(\geq 1\),
    with a wild automorphism \(\sigma\).
    Then we can choose the maximal rationally connected (MRC) fibration \(X\longrightarrow Y\)
    to be a well-defined \(\sigma\)-equivariant surjective smooth morphism with \(0 < \dim Y < \dim X\).
    Further, the action of \(\sigma\) on \(Y\) is also wild.
\end{lemma}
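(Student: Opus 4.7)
The plan follows the strategy of \cite{oguiso2022wild}*{Lemma~2.8}, producing the MRC fibration as a $\sigma$-equivariant morphism by leveraging wildness at the resolution step.

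First, I would invoke the existence of the MRC fibration for uniruled compact K\"ahler manifolds (cf.~\cite{cao2020rational}*{Remark~6.10}): there is a dominant meromorphic map $\pi\colon X\dashrightarrow Y$ to a smooth K\"ahler manifold $Y$ with rationally connected general fibres and non-uniruled base. By the uniqueness of the MRC up to bimeromorphic equivalence over $Y$, the automorphism $\sigma$ induces a bimeromorphic self-map $\tau$ of $Y$ satisfying $\pi\circ\sigma=\tau\circ\pi$ as meromorphic maps.

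Next, I would apply canonical (hence functorial, hence equivariant) resolution of indeterminacy in the analytic category to produce a $\sigma$-equivariant bimeromorphic modification $\varphi\colon\tilde X\to X$ such that $\tilde\pi\coloneqq\pi\circ\varphi\colon\tilde X\to Y$ is an honest morphism. The key point is then to apply \cref{lem:ppty_of_wild2}~(3) to $\varphi$: since $\sigma$ is wild on $X$ and $\varphi$ is $\sigma$-equivariant generically finite surjective, part~(3) yields wildness of $\sigma$ on $\tilde X$ together with the fact that $\varphi$ is finite \'etale; being bimeromorphic, $\varphi$ must then be an isomorphism. Hence $\pi\colon X\to Y$ is already a morphism.

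The identity $\tau\circ\pi=\pi\circ\sigma$ combined with the fact that $\pi\circ\sigma$ is a morphism shows that $\tau\circ\pi$ extends to a morphism on all of $X$. Since $\pi$ has connected fibres (its general fibre is rationally connected, so the Stein factorisation of $\pi$ is trivial), $\tau\circ\pi$ descends through $\pi$, forcing $\tau$ itself to extend to a morphism $Y\to Y$; the same argument with $\sigma^{-1}$ and $\tau^{-1}$ gives $\tau^{-1}$ a morphism, so $\tau\in\Aut(Y)$ and $\pi$ is literally $\sigma$-equivariant. Then \cref{lem:ppty_of_wild2}~(1) yields smoothness of $\pi$, and \cref{lem:ppty_of_wild2}~(2) yields the wildness of $\tau$ on $Y$. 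The dimension bounds follow easily: $\dim Y<\dim X$ by uniruledness of $X$ (the general MRC fibre has positive dimension), and $\dim Y>0$ since otherwise $X$ itself would be rationally connected, contradicting \cref{prop:numerical_inv_of_wild}~(1).

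The main obstacle is the equivariant analytic resolution of indeterminacy used in the second step. In the projective category this is routine via the Bierstone--Milman / W\l{}odarczyk functorial resolution; in the K\"ahler setting one must invoke the analytic analogue and verify that $\sigma$-equivariance persists despite the fact that $\sigma$ only conjugates the MRC into a bimeromorphically equivalent representative rather than preserving $\pi$ on the nose.
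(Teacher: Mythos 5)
Your argument is correct in outline and reaches the same three pillars as the paper: kill the indeterminacy of the MRC map using wildness via \cref{lem:ppty_of_wild2}~(3), get smoothness and wildness of the induced action from \cref{lem:ppty_of_wild2}~(1) and (2), and get the dimension bounds from uniruledness plus \cref{prop:numerical_inv_of_wild}~(1). The one substantive difference is the mechanism at the indeterminacy step, and here the obstacle you flag as the main difficulty is actually avoidable. The paper does not invoke equivariant functorial resolution in the analytic category at all: it takes \(\Gamma\subseteq X\times Y\) to be the graph of the meromorphic map \(X\dashrightarrow Y\). Once the MRC fibration is chosen \(\sigma\)-equivariantly (the paper cites \cite{zhong2021compact}*{Proof of Theorem~1.3~(1)} for this, whereas you rederive it from uniqueness of the MRC up to bimeromorphic equivalence --- both are fine), the graph is canonically attached to the commuting square of meromorphic maps and is therefore automatically \((\sigma\times\sigma_Y)\)-stable; the projection \(\Gamma\to X\) is then a proper bimeromorphic \(\sigma\)-equivariant morphism of compact complex spaces, to which \cref{lem:ppty_of_wild2}~(3) applies directly (possible singularities of \(\Gamma\) are harmless, since wildness of the lifted action forces \(\Gamma\) to be smooth by \cref{lem:ppty_of_wild}~(1)). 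This gives \(\Gamma\simeq X\) and hence that \(X\to Y\) is a morphism, with no appeal to Bierstone--Milman/W\l{}odarczyk. Your subsequent rigidity argument showing that \(\tau\) extends to an automorphism of \(Y\) (using properness and connectedness of the fibres of \(\pi\)) is a legitimate way to upgrade bimeromorphic equivariance to honest equivariance; the paper leaves this implicit in the citation of \cite{zhong2021compact}, so spelling it out as you do is a welcome addition rather than a detour.
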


\begin{proof}
    By \cite{zhong2021compact}*{Proof of Theorem~1.3 (1)},
    we can choose the MRC fibration \(X \dashrightarrow Y\) to be \(\sigma\)-equivariant.
    Moreover, the natural (surjective) bimeromorphic morphism \(\pi\colon \Gamma \to X\)
    from the graph of \(X\dashrightarrow Y\),
    is \(\sigma\)-equivariant,
    and hence \(\pi\) is an isomorphism by \cref{lem:ppty_of_wild2}~(3).
    Thus, we may assume that \(X=\Gamma \longrightarrow Y\) is a well-defined surjective holomorphic map.

    Since \(X\) is not rationally connected by \cref{prop:numerical_inv_of_wild}~(1),
    \(Y\) is not a point.
    Since \(X\) is uniruled, \(\dim Y < \dim X\).
    The action of \(\sigma\) on \(Y\) is wild by \cref{lem:ppty_of_wild2}~(2).
    This proves the lemma.
\end{proof}

\subsection{Auxiliary results}

In the rest of this section, we give two results from number theory,
which will be used in the study of the automorphism groups of Inoue surfaces.

\begin{lemma}\label{lem:elementary}
    Let $f(x) \coloneqq x^n + a_{n-1} x^{n-1} + \cdots + a_1 x + a_0 \in \ZZ[x]$ with $a_0 = \pm 1$.
    If $\alpha$ is a real root of $f(x)$,
    then either $\alpha$ is irrational, or $\alpha = \pm 1$.
\end{lemma}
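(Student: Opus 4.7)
The plan is to invoke the rational root theorem, noting that what needs to be ruled out is precisely the existence of a rational root other than $\pm 1$. So I would argue by contradiction: suppose $\alpha \in \mathbb{Q}$, and write $\alpha = p/q$ in lowest terms with $q \geq 1$ and $\gcd(p,q) = 1$.

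Substituting into $f(\alpha) = 0$ and clearing denominators yields
\[
p^n + a_{n-1} p^{n-1} q + a_{n-2} p^{n-2} q^2 + \cdots + a_1 p q^{n-1} + a_0 q^n = 0.
\]
Reading this equation modulo $q$ gives $q \mid p^n$, and since $\gcd(p,q) = 1$ this forces $q = 1$. Hence $\alpha = p$ is an integer. Reading the same equation modulo $p$ gives $p \mid a_0 q^n = \pm 1$, so $p = \pm 1$. Thus $\alpha = \pm 1$, as required.

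There is no real obstacle here; the argument is a textbook application of the rational root theorem for monic integer polynomials with constant term of absolute value one. The only thing to be careful about is to handle the degenerate cases ($n = 0$ is impossible since $f$ has a root; if $n = 1$ then $\alpha = -a_0 = \pm 1$ directly) and to note that the statement concerns a real root specifically only because the application in the paper requires it — the same proof works verbatim for any root in $\mathbb{Q}$.
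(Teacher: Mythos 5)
Your proof is correct and is essentially the same as the paper's: both clear denominators in $f(p/q)=0$ and use coprimality to force the denominator to divide $p^n$ (hence equal $1$) and the numerator to divide $a_0=\pm 1$. This is the standard rational root theorem argument, which is exactly what the authors do.
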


\begin{proof}
    The proof is elementary.
    Assume that $\alpha = c/d$ is rational with $c, d$ coprime integers.
    Then
    \[
        c\cdot \left(c^{n-1} + a_{n-1} c^{n-2}d + \cdots + a_1 d^{n-1}\right) = - a_0 d^n.
    \]
    This implies $c = \pm 1$,
    since otherwise, a prime factor $p$ of $c$ satisfies $p \mid d$,
    which is a contradiction.
    A similar argument yields $d = \pm 1$.
    This shows the assertion.
\end{proof}

\begin{lemma}\label{lem:finite_quotient}
    Let \(M \in \GL_n(\mathbb{Z})\) be a diagonalisable matrix where \(n = 2\) or \(3\).
    Assume that \(M\) has either
    \begin{itemize}
        \item two real eigenvalues \(\alpha\) \((\neq \pm 1)\) and \(1/\alpha\) or \(-1/\alpha\), when \(n = 2\); or
        \item three eigenvalues \(\alpha\) \((\neq \pm 1)\), \(\beta\) and \(\overline{\beta}\) \((\beta \neq \overline{\beta})\), when \(n = 3\).
    \end{itemize}
    Denote
    \[
        \Gamma\coloneqq \{N\in \GL_n(\mathbb{Z}) \mid N \text{ and } M \text{ are simultaneously diagonalisable}\}.
    \]
    Then \(\Gamma \simeq U \times \mathbb{Z}\)
    where \(U\) is a finite group.
    In particular,
    if we denote by \(M^{\mathbb{Z}}\)
    the subgroup of \(\Gamma\) generated by \(M\),
    then the quotient \(\Gamma/M^{\mathbb{Z}}\) is finite.
\end{lemma}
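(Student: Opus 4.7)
The plan is to identify $\Gamma$ with the unit group of an order in a number field and then invoke Dirichlet's unit theorem. Since $M \in \GL_n(\ZZ)$ satisfies $\det M = \pm 1$, its characteristic polynomial $p(x)$ lies in $\ZZ[x]$ with constant term $\pm 1$; combined with $\alpha \neq \pm 1$, \cref{lem:elementary} forces $\alpha$ to be irrational. The hypotheses on the eigenvalues then ensure $M$ has $n$ pairwise distinct eigenvalues, so a matrix $N \in \GL_n(\ZZ)$ is simultaneously diagonalisable with $M$ if and only if $NM = MN$. In other words, $\Gamma$ is the centralizer of $M$ in $\GL_n(\ZZ)$. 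The same distinctness identifies $p(x)$ with the minimal polynomial of $M$, and a brief case check shows $p(x)$ is irreducible over $\QQ$: for $n=2$ this follows from the irrationality of $\alpha$, while for $n=3$ any rational root of $p$ would be a rational eigenvalue, contradicting either the irrationality of $\alpha$ or the non-reality of $\beta, \overline{\beta}$.

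Therefore $K \coloneqq \QQ[M] \subset \Mat_n(\QQ)$ is a number field of degree $n$: real quadratic for $n = 2$, and cubic of signature $(r_1, r_2) = (1, 1)$ for $n = 3$. Viewing $\QQ^n$ as a one-dimensional $K$-vector space via a $K$-linear identification $\QQ^n \simeq K$, the lattice $\ZZ^n$ corresponds to a full $\ZZ$-lattice $L \subset K$, and the centralizer of $M$ in $\GL_n(\QQ)$ becomes $K^\times$. The condition that $N \in K^\times$ preserve $L$ on both sides means exactly that $N$ is a unit of the order $R \coloneqq \{x \in K : xL \subseteq L\} \subseteq \mathcal{O}_K$. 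Hence $\Gamma \simeq R^\times$.

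By Dirichlet's unit theorem applied to the order $R$, we have $R^\times \simeq U \times \ZZ^{r_1 + r_2 - 1}$, where the torsion subgroup $U$ is the finite group of roots of unity in $K$ (in fact contained in $\{\pm 1\}$, since $K$ has a real embedding). In both of our cases the free rank equals $1$, so $\Gamma \simeq U \times \ZZ$. Under $\Gamma \simeq R^\times \hookrightarrow K^\times$, the matrix $M$ corresponds to the unit $\alpha$; since $\alpha$ is a real algebraic unit distinct from $\pm 1$, it has $\abs{\alpha} \neq 1$ and so is not a root of unity. Hence the image of $M$ in the rank-one free quotient $\Gamma/U \simeq \ZZ$ is nonzero, which forces $M^{\ZZ}$ to have finite index in $\Gamma$. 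The main conceptual ingredient is the translation from the centralizer problem in $\GL_n(\ZZ)$ to the unit group of an order in $K$; rather than being difficult, this is the step that requires care, and once it is in place Dirichlet's theorem completes the argument.
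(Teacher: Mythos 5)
Your proof is correct and follows essentially the same route as the paper: identify \(\Gamma\) with the unit group of an order in the number field \(\QQ[x]/(p(x))\), where irreducibility of \(p\) comes from \cref{lem:elementary}, and conclude by Dirichlet's unit theorem. You in fact make explicit several points the paper leaves implicit (that simultaneous diagonalisability with \(M\) reduces to commuting with \(M\) because the eigenvalues are distinct, the signature computation giving unit rank one in both cases, and that \(M\) maps to a nonzero element of the free part because \(\alpha\) is not a root of unity).
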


\begin{proof}
    We adapt this proof from David E Speyer
    on MathOverflow\footnote{\url{https://mathoverflow.net/q/55646}}.

    It is known that the set of \(n\times n\) rational matrices which commute with \(M\)
    is a rational vector space spanned by \(\id, M, \dots, M^{n-1}\) over \(\mathbb{Q}\).
    Notice that the \(\mathbb{Q}\)-span of the powers of \(M\) is isomorphic,
    as a ring, to \(\mathbb{Q}[x]/p(x)\),
    where \(p(x)\) is the characteristic polynomial of \(M\).
    Since \(p(x)\) is irreducible over \(\mathbb{Q}\) (cf.~\cref{lem:elementary}),
    \(\mathbf{K} \coloneqq \mathbb{Q}[x]/p(x)\) is a number field,
    and the matrices in
    \(\GL_n(\mathbb{Q})\), which commute with \(M\) are isomorphic to \(\mathbf{K}^{*}\).
    The set of the matrices whose entries are in \(\mathbb{Z}\)
    forms an order \(\mathcal{O}\) in \(\mathbf{K}\).
    Those matrices of \(\mathcal{O}\) which are in \(\GL_n(\mathbb{Z})\),
    is the unit group \(\mathcal{O}^{*}\) of \(\mathcal{O}\).
    Then by Dirichlet's unit theorem, in each case, \(\mathcal{O}^{*} \simeq \mu(\mathcal{O}) \times \mathbb{Z}\),
    where \(\mu(\mathcal{O})\) is the finite cyclic group of roots of unity in \(\mathcal{O}\).
    Since the subgroup \(M^{\mathbb{Z}} \leq \Gamma\) is isomorphic to \(\mathbb{Z}\),
    the quotient \(\Gamma/M^{\mathbb{Z}}\) is a finite group, which proves the last claim.
\end{proof}

\section{A technical lemma}\label{sec:tech_lem}

The aim of this section is to show the following technical lemma (\cref{lem_oz2.11}). This is a partial generalisation of \cite{oguiso2022wild}*{Lemma~2.11} from the projective case to the K\"ahler case, which is sufficient for our purpose.

\begin{lemma}\label{lem_oz2.11}
    Let \(X\) be a compact K\"ahler manifold with a wild automorphism \(\sigma\),
    let \(A\) be a complex torus and let \(f\colon X \longrightarrow A\) be a \(\sigma\)-equivariant
    surjective projective morphism with connected fibres of positive dimension.
    Assume general fibres of \(f\) are isomorphic to \(F\).
    Suppose that a positive power \(\sigma_A^s\) of \(\sigma_A\)
    fixes some big \((1,1)\)-class \(\alpha\) on \(A\) in \(H^{1,1}(A)\)
    (this holds if \(\dim A = 1\) or a positive power of \(\sigma_A\) is a translation on \(A\)).
    Then \(- K_F\) is not a big divisor.
\end{lemma}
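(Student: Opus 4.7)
The plan is to argue by contradiction: suppose $-K_F$ is big, and deduce the existence of a proper $\sigma^s$-invariant analytic subset of $X$, contradicting the fact that $\sigma^s$ is itself wild by \cref{lem:ppty_of_wild}~(2).

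First I would construct a big $\sigma^s$-invariant real $(1,1)$-class on $X$ by setting
\[
    \beta := -K_X + N f^*\alpha
\]
for a sufficiently large real $N > 0$. The $\sigma^s$-invariance of $\beta$ in $H^{1,1}(X, \mathbb{R})$ is immediate: $\sigma^{s*}K_X = K_X$ since $\sigma$ is a biholomorphism, and $\sigma^{s*}(f^*\alpha) = f^*(\sigma_A^{s*}\alpha) = f^*\alpha$ by the $\sigma$-equivariance of $f$ together with the hypothesis on $\alpha$. For the bigness, I would use that $\beta|_F = -K_F$ is big on the general fibre and $\alpha$ is big on the base; the Kähler analogue of the classical fact ``relative bigness plus pullback of a big class is big'' then yields that $\beta$ is big on $X$. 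This step can be justified either by gluing a Kähler current in $-K_F$ with a (pullback of a) Kähler current in $\alpha$---using openness of the big cone and the projectivity of $f$ to propagate the fibrewise current---or by a volume asymptotic via Demailly's holomorphic Morse-type inequalities.

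Next, I would invoke the main analytic input: for a big $(1,1)$-class $\beta$ on a compact Kähler manifold, the \emph{non-Kähler locus}
\[
    E_{nK}(\beta) = \bigcap_{T} \Sing(T),
\]
where $T$ ranges over the Kähler currents in the class $\beta$ with analytic singularities, is a proper analytic subset of $X$, and $E_{nK}(\beta) = \emptyset$ if and only if $\beta$ is itself Kähler. Being intrinsic to the class, $E_{nK}(\beta)$ is $\sigma^s$-invariant. Whenever $\beta$ is \emph{not} Kähler, $E_{nK}(\beta)$ is a non-empty proper $\sigma^s$-invariant analytic subset of $X$, and wildness of $\sigma^s$ is contradicted. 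This already handles every case in which $-K_F$ is not ample on $F$, since then $\beta|_F$ is not Kähler on $F$ and so $\beta$ is not Kähler on $X$.

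The main obstacle I foresee is the complementary situation in which $-K_F$ is ample, so that $F$ is Fano and $\beta$ may genuinely be Kähler; in this case the non-Kähler locus is empty and the above argument does not close. Here $X$ would be uniruled with rationally connected general fibres, so \cref{lem:mrc} gives a $\sigma$-equivariant MRC fibration $X \to Y$ with $0 < \dim Y < \dim X$ and $\sigma_Y$ wild; since $A$ is not uniruled and general fibres of $f$ are already rationally connected, this MRC fibration factors through $f$. A finer analysis of the wild action on $A$---exploiting that $\sigma_A^s$ fixes the big class $\alpha$ and that $f$ carries an $f$-ample class since $f$ is projective---should be combined with the non-Kähler locus argument applied to a suitably perturbed class, so as to close the Fano case and complete the proof.
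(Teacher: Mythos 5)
Your first step---forming the $\sigma^s$-invariant class $\beta = -K_X + N f^*\alpha$ and showing it is big for $N \gg 0$---is exactly what the paper does (the paper first uses wildness to show all fibres of $f$ are isomorphic, hence by Grauert--Fischer that $f$ is a holomorphic fibre bundle, and then glues plurisubharmonic potentials for $c_1(F)$ and $\alpha$ over a trivialising cover; you should be aware that the local triviality is what actually makes the ``relative bigness plus pullback of big is big'' step go through in the K\"ahler category). The divergence, and the genuine gap, is in how you extract a contradiction from the invariant big class. Your non-K\"ahler locus argument only produces a proper invariant analytic subset when $\beta$ fails to be K\"ahler, and as you yourself observe it says nothing when $-K_F$ is ample. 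But the Fano-fibre case is not a peripheral technicality: it is the \emph{only} case that occurs in the applications of this lemma in the paper (fibres $\cong \mathbb{P}^1$ in \cref{prop:wild_kahler_dim2} and in the $q(X)=2$ case of \cref{thm:main}, and rational surfaces elsewhere). Your closing paragraph for that case is a statement of intent rather than an argument, so the proof is not complete.

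The missing idea is \cref{cor:class_c_fixes_big_then_torus}: if a positive power of a wild automorphism fixes \emph{any} big $(1,1)$-class $[E]$ (K\"ahler or not), then $\sigma^s \in \Aut_{[E]}(X)$, which by \cite{jia2022equivariant} (\cref{prop:finite_index}) is a Lie group with finitely many components; \cref{prop:lie_gp_action_contains_wild} (proved via Fujiki's exact sequence $1 \to L(X) \to \Aut_0(X) \to T(X) \to 1$) then forces $X$ to be a complex torus. Applied to $[E] = [\beta]$ this gives $K_X = 0$, hence $K_F = K_X|_F = 0$, contradicting bigness of $-K_F$ uniformly in both your cases. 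In other words, the correct mechanism is not to find an invariant subvariety inside $X$ directly from the class, but to use the rigidity of the stabiliser of a big class inside $\Aut(X)$; without this (or an equivalent substitute) your proposal does not establish the lemma.
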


Here we explain the claim in the bracket of the statement of \cref{lem_oz2.11}.
In fact, if \(\dim A = 1\), then \(\sigma_A^{12}\) is a translation on \(A\);
if a positive power \(\sigma_A^{s}\) of \(\sigma_A\) is a translation on \(A\),
then it fixes every big \((1,1)\)-class on \(A\).

Let us start with the following proposition which generalises \cite{reichstein2006projectively}*{Proposition~3.2}.

\begin{proposition}
\label{prop:lie_gp_action_contains_wild}
    Let \(X\) be a compact complex space in Fujiki's class \(\mathcal{C}\),
    and let \(G\leq \Aut(X)\) be a Lie group with finitely many components which acts biholomorphically on \(X\).
    Assume that there exists an element \(\sigma\in G\) acting on \(X\) as a wild automorphism.
    Then \(X\) is a complex torus and \(\sigma\) is a translation automorphism
    when a group structure in \(X\) is chosen properly.
\end{proposition}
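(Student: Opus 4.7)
The plan is to force $\Aut_0(X)$ to act transitively on $X$, then apply a Borel--Remmert type decomposition and rule out the non-torus factor by wildness.

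Setting $N \coloneqq [G:G^0]$, I would first replace $\sigma$ by $\sigma^N\in G^0$, which remains wild by \cref{lem:ppty_of_wild}(2); applying the same lemma to suitable higher powers also forces $X$ to have a single irreducible component (any cyclically permuted collection of components would be invariant under a wild power of $\sigma$), so $X$ is connected. For any $x\in X$ the Zariski closure of $G^0\cdot x$ is a $\sigma^N$-invariant analytic subset of $X$, and wildness gives $\overline{G^0\cdot x}=X$. Since $X$ is smooth by \cref{lem:ppty_of_wild}(1), $\Aut_0(X)\supseteq G^0$ is a connected complex Lie group (Bochner--Montgomery), so the orbit $\Aut_0(X)\cdot x$ is a locally closed complex submanifold of $X$ whose Zariski closure is all of $X$; its complex dimension therefore equals $\dim X$ and the orbit is open. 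Since orbits partition the connected manifold $X$ into clopen subsets, there is only one of them, so $\Aut_0(X)$ acts transitively on $X$.

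With $X$ now a compact homogeneous complex manifold in Fujiki's class $\mathcal{C}$, I would invoke a Borel--Remmert type structure theorem (classical for Kähler, and extended to class $\mathcal{C}$) to obtain a biholomorphism $X\cong T\times F$, where $T$ is a complex torus and $F$ is a simply connected rational homogeneous projective manifold. Next I would check that $\sigma$ splits as $(\sigma_T,\sigma_F)$: by functoriality of the Albanese $T=\Alb(X)$, the first component of $\sigma(t,f)$ equals $\sigma_T(t)$; the induced holomorphic map $t\mapsto\sigma(t,\cdot)\colon T\to\Aut(F)$ lies in a single connected component of $\Aut(F)$ by connectedness of $T$, and its image in the affine algebraic group $\Aut_0(F)$ is constant because every holomorphic map from a compact connected complex manifold into an affine variety is constant. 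Hence the projection $p_F$ is $\sigma$-equivariant, and \cref{lem:ppty_of_wild2}(2) makes $\sigma_F$ wild on the rationally connected $F$; but \cref{prop:numerical_inv_of_wild}(1) precludes a wild automorphism on a positive-dimensional rationally connected manifold, so $\dim F=0$ and $X=T$.

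It remains to identify $\sigma$ as a translation. As $G^0$ is a connected Lie subgroup of $\Aut(T)$, it lies in $\Aut_0(T)=T$, the group of translations, so $\sigma^N\in G^0$ is itself a translation. Writing $\sigma=T_b\circ\alpha$ with $\alpha\in\End(T)$ unipotent as in \cref{thm_rrz7.2}, a direct induction gives $\sigma^N=T_{b+\alpha(b)+\cdots+\alpha^{N-1}(b)}\circ\alpha^N$, which combined with $\sigma^N$ being a translation forces $\alpha^N=\id$; since a unipotent automorphism of finite order is trivial, $\alpha=\id$ and $\sigma=T_b$ is a translation, as required. The principal obstacle is securing the Borel--Remmert type decomposition for homogeneous compact manifolds in class $\mathcal{C}$; once that is available, the reduction to the torus case and the identification of $\sigma$ are essentially formal adaptations of \cite{reichstein2006projectively}*{Proposition~3.2}.
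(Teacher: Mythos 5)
Your reduction to transitivity of $\Aut_0(X)$ is exactly where the argument breaks. You assert that the orbit $\Aut_0(X)\cdot x$ is a locally closed complex submanifold and that, since its Zariski closure is all of $X$, it must have full dimension and hence be open. Neither implication is available in the analytic category. Orbits of a complex Lie group acting holomorphically on a compact complex manifold need not be locally closed (a dense one-parameter group of translations of a torus already gives a densely immersed, non-locally-closed orbit), and even for a genuinely locally closed submanifold $O$ of dimension $d<\dim X$, the Zariski closure of $O$ (the smallest analytic set containing it) can perfectly well be all of $X$ --- Zariski density does not bound the codimension. What you actually need is the analytic analogue of Chevalley's orbit lemma, namely that $\Aut_0(X)$-orbits are Zariski open in their Zariski closures; this is automatic for algebraic actions (and is how \cite{reichstein2006projectively}*{Proposition~3.2} proceeds in the projective case) but is precisely the ``analytic in nature'' difficulty here. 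The paper circumvents it entirely: it takes $G$ to be the closure of $\langle\sigma\rangle$, uses Fujiki's exact sequence $1\to L(X)\to\Aut_0(X)\to T(X)\to 1$ and the cycle-space quotient $X\dashrightarrow X/U$ to show the linear part $U$ of $G$ must be trivial (the section at infinity of the $U$-orbit closures would be a proper invariant analytic set), so that $G$ is a \emph{compact} complex torus; only then is the orbit map $G\to X$ proper, so Remmert's proper mapping theorem makes the orbit analytic and wildness forces it to be everything.

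A second, smaller issue is the Borel--Remmert decomposition for compact homogeneous manifolds in class $\mathcal{C}$, which you flag yourself but leave unproved; the paper never needs it, because once $G$ is a compact torus acting transitively with trivial stabiliser, $X\cong G$ is already a torus and the rational factor never appears. Your final step (a translation power forces the unipotent part $\alpha$ to satisfy $\alpha^N=\id$, hence $\alpha=\id$) agrees with the paper and is fine, but it only becomes usable after the two gaps above are closed.
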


\begin{proof}
    Recall that \(\Aut_0(X)\) is a complex Lie group.
    Since \(\sigma^m\) is wild by \cref{lem:ppty_of_wild}~(2),
    we may replace \(\sigma\) by \(\sigma^m\) and \(G\) by its neutral connected component \(G_0\),
    and so we may assume that \(G\leq \Aut_0(X)\).
    Let \(H\) be the (Zariski) closure of the subgroup \(\{\sigma^i\}_{i \in \mathbb{Z}}\) in \(G\).
    Then \(H\) is a closed Lie subgroup of \(G\).
    Since \(H\) is a closure of an abelian group, \(H\) itself is abelian.
    Without loss of generality,
    we may assume from now on that \(G=H\);
    in particular, \(G\) is abelian and closed.
    Note that now \(G\) is not assumed to have only finitely many components.

    By \cite{fujiki1978automorphism}*{Theorem~5.5},
    there is a short exact sequence
    \[
        1\longrightarrow L(X)\longrightarrow \Aut_0(X)\longrightarrow T(X)\longrightarrow 1
    \]
    where \(L(X)\) is a linear algebraic group and \(T(X)\) is a complex torus.
    The image of \(G\) in \(T(X)\) is closed and hence compact.
    In particular, the image of \(G\) in \(T(X)\) has finitely many components.
    Replacing \(\sigma\) by a power and replacing \(G\) accordingly,
    we may assume that the image of \(G\) in \(T(X)\) is a complex subtorus.
    Note that \(G \cap L(X)\) is an algebraic group and hence has finitely many components.
    It is not hard to see that \(G\) has finitely many components as well.
    After replacing \(\sigma\) by some power and replacing \(G\) accordingly,
    we may assume that \(G \leq \Aut_0(X)\) is irreducible.

    Assume that the neutral component \(U\) of the linear part of \(G\) is non-trivial.
    Then \(U\) being a commutative affine algebraic group,
    is isomorphic to \(\mathbb{G}_m^s \times \mathbb{G}_a^t\) for some \(s,t \geq 0\).
    Note that \(U\lhd G\) is a meromorphic group acting biholomorphically and meromorphically on \(X\)
    (see \cite{fujiki1978automorphism}*{Definition~2.1}).
    Let \(\pi\colon X \dashrightarrow Y = X/U\) be the quotient map to the cycle space
    (cf.~\cite{fujiki1978automorphism}*{Lemma~4.2}),
    so that \(G\) acts biholomorphically on \(Y\) and the map is \(G\)-equivariant.
    The set of points of indeterminacy \(S(\pi)\) is an analytic subset.
    Since \(S(\pi)\) is \(\sigma\)-invariant, one has \(S(\pi) = \emptyset\)
    and \(\pi\) is a holomorphic map.
    The general fibre of \(\pi\) is the closure of some orbit of \(U\)
    with respect to the natural action of \(U\) on \(X\).
    In particular, the (multi-)section at infinity is fixed by \(G\).
    But \(\sigma \in G\) is wild, a contradiction.
    Thus, the linear part of \(G\) is trivial and hence \(G\) is a complex torus.

    Choose any \(x \in X\).
    The rule \(g \mapsto gx\) defines a morphism \(f\colon G\longrightarrow X\), which is proper.
    By Remmert's proper mapping theorem (see e.g., \cite{grauert1984coherent}*{Page 213}),
    the image of \(f(G)\) is an analytic subset of \(X\).
    Since \(f(G)\) is \(\sigma\)-stable and \(\sigma\) is wild, \(f(G) = X\).

    Next, let \(G_0\) be the stabiliser of \(x\).
    Since \(G\) is abelian, \(G_0\) is the stabiliser of every point in \(Gx=X\).
    Since \(G\) is a subgroup of \(\Aut(X)\) and automorphisms of \(X\) are determined by
    their action on each point, the group \(G_0\) is trivial.
    Hence, the morphism \(f\colon G\longrightarrow X\) is bijective and hence biholomorphic.

    Then \(X\) is a complex torus.
    The isomorphism \(f\) transforms the translation automorphism \(T_{\sigma}\) of \(G\)
    to the translation automorphism \(T_{f(\sigma)} = \sigma\) of \(X\).
    Since we replaced \(\sigma\) by \(\sigma^n\) during the proof,
    we see that some power of the original \(\sigma\)
    is a translation automorphism of \(X\).
    Write \(\sigma = T_b \circ \alpha\) for some translation \(T_b\) and unipotent \(\alpha \in \End(X)\).
    Then \(\alpha^n = \id\) for some \(n \geq 1\).
    It follows that  \(\alpha\) is conjugate to  \(\alpha^n = \id\)
    (cf.~\cite{reichstein2006projectively}*{Remark~8.2~(a)}),
    so \(\alpha = \id\) and \(\sigma\) itself is a translation, as desired.
\end{proof}

\begin{proposition}[\cite{jia2022equivariant}*{Corollary~1.3}]\label{prop:finite_index}
    Let \(X\) be a compact complex space in Fujiki's class \(\mathcal{C}\),
    and we fix a big \((1,1)\)-class \([\alpha] \in H^{1,1}(X,\mathbb{R})\).
    Then the group
    \[
        \Aut_{[\alpha]}(X) \coloneqq \{g\in \Aut(X)\mid g^{*}[\alpha] = [\alpha]\}
    \]
    is a finite extension of the identity component \(\Aut_0(X)\) of \(\Aut(X)\),
    that is, \([\Aut_{[\alpha]}(X):\Aut_0(X)] <\infty\).
\end{proposition}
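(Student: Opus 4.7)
The plan is to split the proof into two independent ingredients and then combine them. Let $K \coloneqq \ker(\Aut(X)\to \GL(H^2(X,\ZZ)))$ denote the kernel of the natural action on integral cohomology. I claim that (i) $K$ is a finite extension of $\Aut_0(X)$, and (ii) the stabiliser of $[\alpha]$ inside the discrete image of $\Aut(X)/K$ in $\GL(H^2(X,\ZZ))$ is finite. Together with the observation that $\Aut_0(X)$, being connected, acts trivially on the discrete module $H^2(X,\ZZ)$ (so that $\Aut_0(X)\leq K\leq \Aut_{[\alpha]}(X)$), these two ingredients multiply to give $[\Aut_{[\alpha]}(X):\Aut_0(X)] = [\Aut_{[\alpha]}(X):K]\cdot[K:\Aut_0(X)]<\infty$.

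Ingredient (i) is the classical Fujiki--Lieberman theorem, which I would invoke directly: when $X$ is a compact K\"ahler manifold, $K/\Aut_0(X)$ is a discrete subgroup of the complex Lie group $\Aut(X)/\Aut_0(X)$ that is forced to be finite by Hodge-theoretic compactness arguments on the image in $\GL(H^2(X,\ZZ))$; the extension to $X$ in Fujiki's class $\mathcal{C}$ proceeds by passing to a K\"ahler model. Ingredient (ii) is the more delicate step: I would exploit the fact that $[\alpha]$ is an interior point of the open convex big cone in $H^{1,1}(X,\RR)$, so its stabiliser in the discrete, lattice-preserving group $\Aut(X)/K$ is a discrete subgroup fixing an interior point of this cone. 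Compactness of the isotropy at such an interior big class --- analogous to the finiteness of the stabiliser of a positive vector in the orthogonal group of a hyperbolic lattice, as in the classical K3 situation --- then forces the stabiliser to be finite.

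The main obstacle will be ingredient (ii): in the projective setting one typically argues via the Hodge index theorem applied to a polarisation, but for a general big class on a space in Fujiki's class $\mathcal{C}$ that is not itself K\"ahler the cone and lattice structures are less transparent. One would either need to produce an $\Aut_{[\alpha]}$-equivariant K\"ahler model of $X$ (a tool whose absence in general class $\mathcal{C}$ is explicitly flagged in the paper's introduction) or else work intrinsically with the open big cone in $H^{1,1}(X,\RR)$ and the lattice $H^2(X,\ZZ)$. The intrinsic approach is presumably the one carried out in \cite{jia2022equivariant}, where the statement is proved as Corollary~1.3.
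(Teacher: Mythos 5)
First, a structural remark: the paper does not prove this proposition at all --- it is imported verbatim from \cite{jia2022equivariant}*{Corollary~1.3} --- so there is no internal argument to compare yours against, and your proposal must stand on its own. It does not, and the gap sits exactly where you flag it, in ingredient (ii). The hyperbolic-lattice analogy is special to surfaces: in higher dimension the invariant symmetric form you would want, \(q_\alpha(\beta,\beta')=\int_X\beta\wedge\beta'\wedge\alpha^{n-2}\), is only known to have hyperbolic signature on \(H^{1,1}\) (Hodge--Riemann) when \([\alpha]\) is K\"ahler, not merely big, so the ``stabiliser of a positive vector in \(O(1,k)\) is compact'' argument is not available. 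The \(H^{1,1}\)-block can in fact be rescued without any quadratic form --- the stabiliser of an interior point of a salient open convex cone inside the linear automorphism group of that cone is compact (Vinberg's invariant Riemannian metric from the Hessian of the logarithm of the characteristic function), and the big cone is such a cone --- but this is an argument you would have to supply, not a one-line analogy. Even granting it, you have only controlled the restriction to \(H^{1,1}(X,\RR)\): to conclude that the stabiliser is finite you need its closure in \(\GL(H^2(X,\RR))\) to be compact before invoking preservation of the lattice \(H^2(X,\ZZ)\), and nothing in the proposal controls the block \(H^{2,0}\oplus H^{0,2}\); the natural invariant Hermitian form \(\int_X\sigma\wedge\overline{\tau}\wedge\alpha^{n-2}\) is again only known to be definite for \([\alpha]\) K\"ahler. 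Finally, for a possibly singular \(X\) in class \(\mathcal{C}\) the objects you work with (\(H^{1,1}(X,\RR)\), the big cone, the Hodge decomposition of \(H^2\)) themselves require care to define.

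Ingredient (i) also quietly begs the question in the class-\(\mathcal{C}\) case: ``passing to a K\"ahler model'' only helps if the automorphisms in \(K\) lift to that model, and producing such an equivariant K\"ahler model is precisely the main construction of the reference being cited. The route actually taken in the literature (Lieberman and Fujiki in the K\"ahler case, and \cite{jia2022equivariant} for class \(\mathcal{C}\)) bypasses your cohomological decomposition entirely: for \(g\in\Aut_{[\alpha]}(X)\) one bounds the volume of the graph \(\Gamma_g\subseteq X\times X\) by intersection numbers built from \([\alpha]\) and \(g^{*}[\alpha]=[\alpha]\), and then uses Bishop-type compactness of the components of the cycle (Douady) space --- valid in Fujiki's class \(\mathcal{C}\) --- to conclude that these graphs lie in finitely many components, each contributing one connected component of \(\Aut_{[\alpha]}(X)\). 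To salvage your approach you would need to write out the convex-cone compactness argument, control the \((2,0)\)-part, and justify ingredient (i) for class \(\mathcal{C}\); as written the proof is incomplete.
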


\begin{corollary}\label{cor:class_c_fixes_big_then_torus}
    Let \(X\) be a compact complex space in Fujiki's class \(\mathcal{C}\).
    Assume the \(X\) admits a wild automorphism \(\sigma\).
    Suppose that a positive power of \(\sigma\) fixes a big \((1,1)\)-class of \(X\).
    Then \(X\) is a complex torus.
\end{corollary}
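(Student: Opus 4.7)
The plan is to combine the two preceding results (\cref{prop:lie_gp_action_contains_wild,prop:finite_index}) in the only natural way possible. First I would replace $\sigma$ by a suitable positive power $\tau = \sigma^k$ so that $\tau$ fixes the given big $(1,1)$-class $[\alpha] \in H^{1,1}(X,\RR)$; by \cref{lem:ppty_of_wild}~(2) the automorphism $\tau$ is still wild. Thus $\tau$ lies in the subgroup
\[
    \Aut_{[\alpha]}(X) = \{g \in \Aut(X) \mid g^{*}[\alpha] = [\alpha]\}.
\]

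Next I would invoke \cref{prop:finite_index}, which says that $\Aut_{[\alpha]}(X)$ is a finite extension of the identity component $\Aut_0(X)$. Since $\Aut_0(X)$ is a (complex) Lie group and the index $[\Aut_{[\alpha]}(X):\Aut_0(X)]$ is finite, the group $G \coloneqq \Aut_{[\alpha]}(X)$ is itself a Lie group with finitely many connected components, and it acts biholomorphically on $X$. Crucially, $\tau \in G$ is a wild automorphism, so the hypotheses of \cref{prop:lie_gp_action_contains_wild} are met.

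Applying \cref{prop:lie_gp_action_contains_wild} to this $G$ immediately gives that $X$ is a complex torus (and moreover that $\tau$ is a translation with respect to a suitable group structure on $X$, although only the first conclusion is needed here). This completes the proof.

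In this argument there is no single step that I would expect to be a genuine obstacle, since all the heavy lifting has already been done: \cref{prop:lie_gp_action_contains_wild} provides the structural conclusion, \cref{prop:finite_index} ensures that the natural stabiliser $\Aut_{[\alpha]}(X)$ is big enough to be a Lie group with finitely many components but small enough to be meaningful, and \cref{lem:ppty_of_wild}~(2) lets us pass freely to a power of $\sigma$. The only mild point to check is that a finite extension of the complex Lie group $\Aut_0(X)$ is a Lie group with finitely many components, which is immediate from the definition of a finite extension.
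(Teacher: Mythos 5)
Your proposal is correct and coincides with the paper's own proof: pass to a power $\sigma^s$ fixing $[\alpha]$ (still wild by \cref{lem:ppty_of_wild}~(2)), note $\sigma^s\in\Aut_{[\alpha]}(X)$, use \cref{prop:finite_index} to see this is a Lie group with finitely many components, and apply \cref{prop:lie_gp_action_contains_wild}. No differences worth noting.
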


\begin{proof}
    Assume that for some positive integer $s$,
    the wild automorphism \(\sigma^s\) fix a big \((1,1)\)-class \([\alpha] \in H^{1,1}(X,\mathbb{R})\).
    Then \(\sigma^s \in \Aut_{[\alpha]}(X)\),
    which is a Lie group with finitely many components by \cref{prop:finite_index}.
    Now we may apply \cref{prop:lie_gp_action_contains_wild} with \(G = \Aut_{[\alpha]}(X)\).
\end{proof}

Now we are ready to prove the technical lemma.

\begin{proof}[Proof of \cref{lem_oz2.11}]
    Note that the fibres of \(f\) are analytically isomorphic.
    In fact, let \(A_1\) be the set of points of \(A\) over which the fibre is not isomorphic to \(F\).
    Then \(A_1\) is \(\sigma\)-invariant and is disjoint with a non-empty open subset.
    Since \(\sigma_A\) is wild, \(A_1\) is empty.
    By a theorem of Grauert--Fischer (see, e.g., \cite{barth2004compact}*{I.10.1}), \(f\) is locally trivial,
    i.e., \(f\) is a holomorphic fibre bundle.

    Suppose that \(- K_F = (- K_X)|_F\) is big.
    Then \(c_1(F)\) is a big \((1,1)\)-form on \(F\).
    Let \(\omega_F, \omega_A\) be K\"ahler forms on \(F\) and \(A\), respectively.
    Since both \(c_1(F)\) and \(\alpha\) are big (cf.~\cite{boucksom2002volume}*{Section 2.3}),
    there are plurisubharmonic functions \(\varphi, \psi\) on \(F\) and \(A\) respectively such that
    \begin{align*}
        c_1(F) + dd^c \varphi \geq \delta \omega_F \quad \text{for some} \quad \delta > 0, \\
        \alpha + dd^c \psi \geq \varepsilon \omega_A \quad \text{for some} \quad \varepsilon > 0.
    \end{align*}

    For each \(x \in X\), take an open neighbourhood \(U\) of \(a = f(x)\) in $A$ such that
    \(f\) is trivial on \(U\).
    On \(U\), let \(q\colon f^{-1}(U) \simeq F \times U \longrightarrow F\)
    be the first projection.
    Then \(c_1(X) - q^{*} c_1(F) = f^{*} \beta\) for some \((1,1)\)-class \(\beta\) on \(U\).
    Take \(n_U \gg 0\) such that \(\beta + n_U \varepsilon (\omega_A)|_U\) is a K\"ahler form on $U$.
    Then on \(f^{-1}(U)\),
    \begin{align*}
         & \phantom{\;=\;} c_1(X) + n_U f^{*}\alpha + dd^c \left( n_U \psi\circ f + \varphi \circ q \right)     \\
         & = c_1(X) + n_U f^{*}\alpha + n_U f^{*} dd^c \psi + q^{*} dd^c \varphi                                \\
         & = c_1(X) - q^{*} c_1(F) + q^{*} c_1(F) + q^* dd^c \varphi + n_U f^{*}\alpha + n_U f^{*} dd^c \varphi \\
         & = f^{*}\beta + n_U f^{*} (\alpha + dd^c \varphi) + q^{*} (c_1(F) + dd^c \varphi)                     \\
         & \geq f^{*}(\beta + n_U \varepsilon \omega_A) + \delta q^{*} \omega_F.
    \end{align*}
    Note that by the construction,
    \(f^{*}(\beta + n_U \varepsilon \omega_A) + \delta q^{*} \omega_F\)
    is a K\"ahler form on \(f^{-1}(U)\).
    It follows that \( \left(c_1(X) + n_U f^{*} (\alpha)\right)|_{f^{-1}(U)}\) is a big form on \(f^{-1}(U)\)
    as $n_U \psi \circ f + \varphi \circ q$ is a plurisubharmonic function on \(f^{-1}(U)\).
    Since \(A\) is compact, by taking a uniform \(n\),
    we can ensure that \(E \coloneqq c_1(X) + n f^{*} \alpha\) is a big form on \(X\).

    Now \(\sigma^s\) fixes the big class \([E]\).
    Thus, by \cref{cor:class_c_fixes_big_then_torus},
    \(X\) is a complex torus and hence \(K_F = K_X|_F = 0\), a contradiction.
\end{proof}

\section{Wild automorphisms in dimension two}\label{sec:wild_dim2}

In this section, we study wild automorphisms of compact complex surfaces and prove \cref{thm:wild_dim2}.
We distinguish two cases: K\"ahler (\cref{prop:wild_kahler_dim2})
and non-K\"ahler (\cref{prop:wild_nonkahler_dim2}).

The following result is well-known (see e.g., \cite{cantat2020automorphisms}*{Theorem~6.10}).
Here we give a proof for the sake of completeness.

\begin{proposition}\label{prop:wild_kahler_dim2}
    Let \(X\) be a compact K\"ahler space of dimension \(\leq 2\).
    Assume that \(X\) admits a wild automorphism \(\sigma\).
    Then \(X\) is a complex torus, and \(\sigma\) has zero entropy.
\end{proposition}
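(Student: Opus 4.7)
The plan is to combine the numerical restrictions on wild automorphisms collected in \cref{sec:preliminaries} with the Enriques--Kodaira classification of compact K\"ahler surfaces.

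By \cref{lem:ppty_of_wild}~(1) one has that $X$ is smooth. The low-dimensional cases are immediate: if $\dim X = 0$ there is nothing to prove, and if $\dim X = 1$ then \cref{prop:numerical_inv_of_wild}~(1) gives $\chi(\mathcal{O}_X) = 0$, forcing $X$ to be an elliptic curve, with zero entropy automatic in dimension one. So the interesting case is $\dim X = 2$, where I would split according to the value of $\kappa(X)$, which is at most zero by \cref{prop:numerical_inv_of_wild}~(2).

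For $\kappa(X) = -\infty$, a K\"ahler surface is uniruled. By \cref{lem:mrc}, the MRC fibration can be arranged as a $\sigma$-equivariant smooth surjective morphism $f \colon X \to Y$ with $0 < \dim Y < 2$, along which the induced automorphism is again wild. Then \cref{prop:numerical_inv_of_wild}~(1) applied to the smooth curve $Y$ gives $\chi(\mathcal{O}_Y) = 0$, so $Y$ is an elliptic curve and the general fibre $F$ of $f$ is a smooth rational curve. I would finish the case by invoking \cref{lem_oz2.11} with $A = Y$ (whose hypothesis is automatic since $\dim A = 1$) to conclude $-K_F$ is not big, which contradicts $F \cong \mathbb{P}^1$. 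For $\kappa(X) = 0$, \cref{prop:numerical_inv_of_wild}~(3) produces a finite \'etale cover $\widetilde{X} = T \times \prod_i C_i$ of $X$ on which a positive power of $\sigma$ acts diagonally and wildly on each factor, with $T$ a complex torus and each $C_i$ a strict Calabi--Yau manifold. Any such $C_i$ would have to be of dimension two, hence a K3 surface; but $\chi(\mathcal{O}_{\mathrm{K3}}) = 2$ contradicts \cref{prop:numerical_inv_of_wild}~(1) applied to the wild action on $C_i$. Thus $\widetilde{X} = T$, so $X$ is a Q-torus, and \cref{prop:qtorus} concludes that $X$ itself is a complex torus. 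Zero entropy then follows from \cref{prop:entropy_tori}.

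The only non-routine step I expect is the ruled-surface contradiction: the $\kappa(X) = 0$ case reduces directly to the Beauville--Bogomolov package (\cref{prop:numerical_inv_of_wild}~(3)) and the Q-torus rigidity (\cref{prop:qtorus}), whereas ruling out a $\mathbb{P}^1$-bundle over an elliptic curve forces one to invoke the analytic technical \cref{lem_oz2.11}.
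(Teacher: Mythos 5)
Your proof is correct and follows essentially the same route as the paper: the $\kappa(X)=-\infty$ case is handled identically via the MRC fibration and \cref{lem_oz2.11}, and the $\kappa(X)=0$ case reduces to \cref{prop:qtorus}. The only cosmetic difference is that for $\kappa(X)=0$ you rule out K3 factors through the Beauville--Bogomolov cover of \cref{prop:numerical_inv_of_wild}~(3), whereas the paper invokes the classification of minimal K\"ahler surfaces of Kodaira dimension zero to land on ``torus or hyperelliptic''; both arguments are valid and converge on the same Q-torus step.
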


\begin{proof}
    Here we follow the argument of \cite{oguiso2022wild}*{Theorem 3.1}.
    Note that $X$ is smooth and $\kappa(X) \leq 0$
    by \cref{lem:ppty_of_wild}~(1) and \cref{prop:numerical_inv_of_wild}~(2).
    Thus, when $\dim X = 1$, $X$ is an elliptic curve by \cref{prop:numerical_inv_of_wild}~(1).
    Let us consider the case where $\dim X = 2$.

    If $\kappa(X) = -\infty$, by \cref{prop:numerical_inv_of_wild}~(1) and \cref{lem:mrc},
    $X$ admits a smooth fibration $f\colon X \to Y$
    with fibres $F$ smooth rational curve and $Y$ an elliptic curve.
    But then $F$ has ample $-K_F$,
    which contradicts with \cref{lem_oz2.11}.

    If $\kappa(X) = 0$,
    then $X$ is either a complex torus or a hyperelliptic surface by \cref{prop:numerical_inv_of_wild}~(2)
    and (1) together with the classification of minimal K\"ahler surfaces of Kodaira dimension $0$.
    Hence $X$ is a complex torus by \cref{prop:qtorus}.

    The second claim follows from the first one and \cref{prop:entropy_tori}.
\end{proof}

Now we deal with the non-K\"ahler case.

\begin{proposition}\label{prop:wild_nonkahler_dim2}
    Let \(X\) be a compact complex surface which is not K\"ahler.
    Suppose that \(X\) has a wild automorphism \(\sigma\).
    Then \(X\) is an Inoue surface of type \(S_M^{(+)}\), and \(\sigma\) has zero entropy.
\end{proposition}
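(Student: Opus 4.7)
The plan is to combine the Enriques--Kodaira classification of compact complex surfaces with the numerical constraints of \cref{prop:numerical_inv_of_wild} to enumerate the possibilities for $X$, and then eliminate every case except Inoue surfaces of type $S_M^{(+)}$.

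First I would apply \cref{lem:ppty_of_wild}(1) and \cref{prop:numerical_inv_of_wild} to deduce that $X$ is smooth with $\chi(\mathcal{O}_X) = 0$, $e(X) = 0$, $\kappa(X) \le 0$, and $K_X \sim_{\QQ} 0$ whenever $\kappa(X) = 0$. Wildness also rules out $(-1)$-curves, since any such curve is characterised intrinsically and the finite union of all $(-1)$-curves would be a $\sigma$-invariant proper analytic subset; hence $X$ is minimal. Since $X$ is non-K\"ahler, $b_1(X)$ is odd, and $e(X) = 0$ forces $b_2(X) = 2b_1(X) - 2$. Plugging these into the Enriques--Kodaira classification leaves only: (i) $\kappa(X) = -\infty$, so $X$ is of class VII with $b_1 = 1$, $b_2 = 0$, and hence a Hopf or Inoue surface by the Bogomolov--Li--Yau--Teleman theorem; or (ii) $\kappa(X) = 0$, so $X$ is a primary or secondary Kodaira surface.

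Next I would eliminate every case except $S_M^{(+)}$. Every Hopf surface carries a non-empty finite set of holomorphic curves (for instance, an elliptic curve descending from a coordinate axis of the universal cover $\mathbb{C}^2 \setminus \{0\}$), whose union is $\Aut(X)$-stable; a positive power $\sigma^n$ then fixes each such curve individually, giving the forbidden $\sigma^n$-invariant proper analytic subset via \cref{lem:ppty_of_wild}(2). For a primary Kodaira surface, \cref{lem:oz2.7} presents the Albanese $\alb_X \colon X \to E'$ as a smooth $\sigma$-equivariant elliptic fibration over an elliptic curve, and \cref{lem:ppty_of_wild2}(2) makes the induced $\sigma_{E'}$ wild on $E'$. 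A direct analysis of the Heisenberg-quotient presentation of $X$ (or, equivalently, of $\Aut(X)$ as described by Fujiki) shows that only a finite subgroup of translations of $E'$ lifts to $\Aut(X)$, so $\sigma_{E'}$ would have finite order, a contradiction. The secondary Kodaira case reduces to the primary one via the finite \'etale cover and \cref{lem:ppty_of_wild2}(3). Finally, among the Inoue surfaces, \cref{thm:aut_inoue} shows that $\Aut(X)$ is finite for types $S_M$ and $S_M^{(-)}$, so these admit no infinite-order (hence no wild) automorphism; therefore $X$ is of type $S_M^{(+)}$.

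The zero-entropy assertion is then immediate: an Inoue surface of type $S_M^{(+)}$ has $b_2 = 0$, so $\sigma^*$ acts trivially on $H^2(X, \RR)$, the first dynamical degree of $\sigma$ equals $1$, and since $\dim X = 2$ all dynamical degrees are $1$ and the topological entropy vanishes. The main obstacle will be the Kodaira case, where ruling out wild automorphisms cleanly requires enough information about $\Aut(X)$ to force finiteness of its image in $\Aut(E')$; a cleaner alternative, which avoids appealing to Fujiki's classification, is to argue that some positive power of $\sigma$ lies in $\Aut_0(X)$, whose elements act fibrewise on $\alb_X$, so that some fibre of $\alb_X$ is $\sigma^n$-invariant and provides the desired contradiction directly.
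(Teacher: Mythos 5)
Your overall strategy---restrict to minimal non-K\"ahler surfaces compatible with $e(X)=0$ and $\kappa(X)\le 0$, then eliminate everything except $S_M^{(+)}$ using the finiteness of $\Aut$ for types $S_M$ and $S_M^{(-)}$---is essentially the paper's, and most of the case analysis is sound. But two steps have genuine gaps. The first is the elimination of Hopf surfaces: the claim that \emph{every} Hopf surface carries a non-empty \emph{finite} set of curves is false for elliptic Hopf surfaces (those of algebraic dimension $1$), which are elliptic fibrations over $\PP^1$ and contain infinitely many curves with, in general, no distinguished finite subset (e.g.\ the quotient of $\CC^2\setminus\{0\}$ by $(z_1,z_2)\mapsto(\alpha z_1,\alpha z_2)$ is an elliptic bundle over $\PP^1$ whose fibres are all interchangeable). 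The finiteness you invoke is only available when $a(X)=0$. The paper closes this case by first proving $a(X)=0$: if $a(X)=1$, the algebraic reduction is $\Aut(X)$-equivariant and its base inherits a wild, hence infinite-order, automorphism, which for a Hopf surface would live on $\PP^1$---impossible. You need this (or an equivalent) extra step; as written, elliptic Hopf surfaces survive your argument. (The same algebraic-reduction step is how the paper also disposes of Kodaira surfaces, citing Prokhorov--Shramov for finiteness of the image of $\Aut(X)$ in the automorphisms of the base; your Albanese-based treatment of primary Kodaira surfaces is a reasonable substitute, but the key finiteness claim is asserted rather than proved, and your ``cleaner alternative'' presupposes both that $\Aut(X)/\Aut_0(X)$ is finite and that $\Aut_0(X)$ acts fibrewise, neither of which is free.)

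The second gap is the entropy statement. You deduce zero entropy from $b_2(X)=0$ via ``first dynamical degree $=1$, hence entropy $0$''. That implication is the Gromov--Yomdin theorem, which holds (and is stated in the paper) only for compact \emph{K\"ahler} manifolds; here $X$ is non-K\"ahler by hypothesis, and the Gromov-type upper bound $h(\sigma)\le\log\rho(\sigma^*)$ is precisely the direction whose proof uses the K\"ahler form, while Yomdin's inequality goes the other way and gives nothing. The paper instead quotes Cantat's theorem that every automorphism of a compact non-K\"ahler surface has zero entropy, which is the correct (and non-trivial) input. A smaller point on minimality: the union of all $(-1)$-curves is a proper analytic subset only if there are finitely many of them, which is true here but needs justification; the paper's route---the minimal model of a non-K\"ahler surface is unique and $\Aut(X)\le\Bim(X)=\Aut(X_{\mathrm m})$, so $X\to X_{\mathrm m}$ is $\sigma$-equivariant and hence an isomorphism by \cref{lem:ppty_of_wild2}~(3)---avoids the issue entirely.
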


\begin{proof}
    Notice that, by a result of Cantat (\cite{cantat1999dynamique}*{Proposition~1}),
    any automorphism of a non-K\"ahler surface has zero entropy.
    It remains to show that \(X\) is an Inoue surface of type \(S_M^{(+)}\).

    Note that any non-K\"ahler compact complex surface \(X\) has a unique minimal model \(X_{\mathrm{m}}\)
    and \(\Aut(X) \leq \Bim(X) \simeq \Bim(X_{\mathrm{m}}) = \Aut(X_{\mathrm{m}})\) (cf.~\cite{prokhorov2021automorphism}*{Proposition~3.5}), where \(\Bim(X)\) denotes the group of bimeromorphic transformations of \(X\).
    So there is a \(\sigma\)-equivariant surjective bimeromorphic morphism \(X \to X_{\mathrm{m}}\)
    and hence \(X \to X_{\mathrm{m}}\) is an isomorphism by \cref{lem:ppty_of_wild2}~(3),
    Therefore, \(X\) is minimal; see \cref{tab:non_kahler_minimal} below.

    \begin{table}[htbp]
        \caption{non-K\"ahler minimal smooth compact complex surfaces}
        \begin{tabular}{lccccc}
            \hline
            class of the surface \(X\)     & \(\kappa(X)\) & \(a(X)\) & \(b_1(X)\) & \(b_2(X)\) & \(e(X)\)   \\ \hline
            surfaces of class \Romannum{7} & \(-\infty\)   & \(0,1\)  & \(1\)      & \(\geq 0\) & \(\geq 0\) \\
            primary Kodaira surfaces       & \(0\)         & \(1\)    & \(3\)      & \(4\)      & \(0\)      \\
            secondary Kodaira surfaces     & \(0\)         & \(1\)    & \(1\)      & \(0\)      & \(0\)      \\
            properly elliptic surfaces     & \(1\)         & \(1\)    &            &            & \(\geq 0\) \\ \hline
        \end{tabular}
        \label{tab:non_kahler_minimal}
    \end{table}

    First we claim that the algebraic dimension \(a(X) = 0\).
    Suppose that the algebraic dimension \(a(X) = 1\).
    Let \(\pi\colon X\longrightarrow Y\) be the algebraic reduction with $Y$ a complex projective curve,
    which is \(\Aut(X)\)-equivariant (cf.~\cite{ueno1975classification}*{Chapter~\Romannum{1}, Definition~3.3}).
    In particular, \(\sigma\) descends to a wild automorphism \(\sigma_Y\) on the base \(Y\)
    (cf.~\cref{lem:ppty_of_wild2}~(2))
    and hence \(Y\) is an elliptic curve and \(\sigma_Y\) is of infinite order.
    As $X$ cannot be a Kodaira surface or a properly elliptic surface
    by \cite{prokhorov2020bounded}*{Lemma~2.4 and Proposition~1.2},
    we conclude that $\kappa(X) = -\infty$.
    So $X$ is a Hopf surface with $a(X) = 1$.
    But then the base $Y$ of the algebraic reduction of $X$ is $\PP^1$,
    which is a contradiction.
    Therefore, the algebraic dimension \(a(X) = 0\),
    a priori, $\kappa(X) = -\infty$.
    In particular, $X$ is a minimal surface of class \Romannum{7}.

    Now we claim that the second Betti number \(b_2(X) = 0\).
    Otherwise, the Euler number \(e(X) = b_2(X) > 0\) as $b_0(X) = b_1(X) = 1$,
    which is a contradiction to \cref{prop:numerical_inv_of_wild}~(1).
    Thus \(b_2(X) = 0\) and \(X\) is a minimal surface of class~\Romannum{7}.
    In other words, $X$ is either a Hopf surface with \(a(X) = 0\)
    or an Inoue surface (cf.~\cite{teleman1994projectively}*{Theorem 2.1}; see also \cite{bogomolov1976classification}).
    Note that there are finitely many (and at least one) curves on a Hopf surface with \(a(X) = 0\)
    (see \cite{barth2004compact}*{Theorems~\Romannum{4}~8.2 and \Romannum{5}~18.7}),
    which are \(\Aut(X)\)-invariant.
    Therefore, \(X\) cannot be a Hopf surface.
    This proves that \(X\) is an Inoue surface.

    Finally, by \cref{thm:aut_sm,thm:aut_smn} in the next section,
    we conclude that \(X\) must be an Inoue surface of type \(S_M^{(+)}\).
\end{proof}

\section{Automorphism groups of Inoue surfaces}\label{sec:inoue_surfaces}

In this section, we study the automorphism groups of Inoue surfaces.
Recall that an \emph{Inoue surface} \(X\) is a compact complex surface
obtained from \(W \coloneqq \mathbb{H}\times\mathbb{C}\) as a quotient
by an infinite discrete group, where \(\mathbb{H}\) is the upper half complex plane.
Inoue surfaces are minimal surfaces in class \Romannum{7},
contain no curve, and have the following numerical invariants:
\[
    a(X) = 0,\quad b_1(X) = 1,\quad b_2(X) = 0.
\]
There are three families of Inoue surfaces: \(S_M\), \(S_M^{(+)}\), and \(S_M^{(-)}\)
(cf.~\cite{inoue1974surfaces}), and we will study them separately.

\subsection{Type \texorpdfstring{\(S_M\)}{SM}}\label{sub:inoue_sm}

Let \(M = (m_{i,j})\in \SL_3(\mathbb{Z})\) be a matrix with eigenvalues
\(\alpha,\beta,\overline{\beta}\) such that \(\alpha>1\) and \(\beta\neq \overline{\beta}\).
Take \((a_1,a_2,a_3)^T\) to be a real eigenvector of \(M\) corresponding to \(\alpha\),
and \((b_1,b_2,b_3)^T\) an eigenvector corresponding to \(\beta\).
Let \(G_M\) be the group of automorphisms of \(W\) generated by
\begin{align*}
    g_0(w,z)   & = (\alpha w,\beta z),                    \\
    g_{i}(w,z) & = (w + a_{i}, z + b_{i}), \quad i=1,2,3,
\end{align*}
which satisfy these conditions
\begin{gather*}
    g_0 g_{i} g_0^{-1} = g_1^{m_{i,1}}g_2^{m_{i,2}}g_3^{m_{i,3}}, \\
    g_{i}g_{j} = g_{j}g_{i}, \quad i,j=1,2,3.
\end{gather*}
Note that \(G_M = G_1 \rtimes G_0\) where
\[ G_1 = \{g_1^{n_1}g_2^{n_2}g_3^{n_3} \mid n_i \in \mathbb{Z}\} \simeq \mathbb{Z}^3 \ \
    \text{and} \ \ G_0 = \langle g_0 \rangle \simeq \mathbb{Z}.\]
It can be shown that the action of \(G_M\) on \(W\) is free and properly discontinuous.
The quotient \(X\coloneqq W / G_M\) is an Inoue surface of type \(S_M\) (cf.~\cite{inoue1974surfaces}*{Section 2}).
Thus, there is a short exact sequence of groups
\[
    1\longrightarrow G_M\longrightarrow \widetilde{\Aut}(X)\longrightarrow \Aut(X)\longrightarrow 1
\]
where \(\widetilde{\Aut}(X)\) acts biholomorphically on \(W\),
which is the normaliser of \(G_M\) in \(\Aut(W)\).

As in \cite{jia2022automorphisms}*{Section 6.1},
\(\widetilde{\Aut}(X) = K \rtimes \Gamma\), where
\[
    K = \Big\{(w,z) \longmapsto
    \Big(w+\frac{1}{1-\alpha}\sum_{j=1}^{3}n_{i}a_{i},z+\frac{1}{1-\beta}\sum_{j=1}^{3}n_{i}b_{i}\Big)
    \mid n_i \in \mathbb{Z}\Big\} \simeq \mathbb{Z}^3
\]
and
\[
    \Gamma\coloneqq \{N\in \GL_3(\mathbb{Z}) \mid N \text{ and } M \text{ are simultaneously diagonalisable}\}.
\]
Now we have the following commutative diagram by the snake lemma
\begin{equation}\label{eq:sm_comm_diag}
    \xymatrix{
    {} & 1 \ar[d] & 1 \ar[d] & 1 \ar[d] & {} \\
    1 \ar[r] & G_1 \ar[r]^{\phi} \ar[d] & K \ar[r] \ar[d] & F \ar[r] \ar[d] & 1 \\
    1 \ar[r] & G_M \ar[r] \ar[d] & \widetilde{\Aut}(X) \ar[r] \ar[d] & \Aut(X) \ar[r] \ar[d] & 1 \\
    1 \ar[r] & G_0 \ar[r]^{\psi} \ar[d] & \Gamma \ar[r] \ar[d] & \Gamma/G_0 \ar[r] \ar[d] & 1 \\
    {} & 1 & 1 & 1 & {}
    }
\end{equation}
where \(\phi\) is given by
\[
    \phi(\mathbf{n})=\mathbf{n}\cdot (I-M) \quad \text{with } \mathbf{n}=(n_1,n_2,n_3)
\]
and \(\psi\) is defined via \(g_0 \mapsto M\).
Since \(I - M\) is invertible, the group \(F\) being the cokernel of \(\phi\), is finite.

Now by \cref{lem:finite_quotient}, \(\Gamma/G_0\) is finite.
Consequently, the diagram~\eqref{eq:sm_comm_diag} implies:

\begin{theorem}\label{thm:aut_sm}
    Let \(X\) be an Inoue surface of type \(S_M\).
    Then the automorphism group \(\Aut(X)\) is finite.
    In particular, \(X\) does not have any wild automorphism. \qed
\end{theorem}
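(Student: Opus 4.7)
My plan is to read off the size of $\Aut(X)$ directly from the commutative diagram \eqref{eq:sm_comm_diag}, using the two finiteness facts that have just been established in the surrounding discussion.

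First, I would focus on the right-hand column of \eqref{eq:sm_comm_diag}, which is a short exact sequence
\[
1 \longrightarrow F \longrightarrow \Aut(X) \longrightarrow \Gamma/G_0 \longrightarrow 1.
\]
The top-right group $F$ is the cokernel of $\phi \colon G_1 \to K$, and since $\phi$ is given by multiplication by $I - M$ and $M$ has no eigenvalue equal to $1$ (its eigenvalues are $\alpha > 1$, $\beta, \overline{\beta}$ with $\beta \neq \overline{\beta}$), the matrix $I - M$ is invertible over $\QQ$. Hence $\phi$ has finite cokernel, i.e.\ $|F| < \infty$. The bottom-right group $\Gamma/G_0$ is finite by the application of \cref{lem:finite_quotient} with $n = 3$ (whose hypotheses are exactly satisfied by $M$).

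Combining these two finiteness statements with the above exact sequence yields $|\Aut(X)| < \infty$, which is the first assertion. For the ``in particular'' claim I would invoke \cref{lem:ppty_of_wild}(2), which says that every wild automorphism has infinite order; since $\Aut(X)$ is a finite group, no such element can exist in $\Aut(X)$, and so $X$ admits no wild automorphism.

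There is no real obstacle in this argument once the diagram \eqref{eq:sm_comm_diag} is in hand: both of the finiteness inputs are already prepared (one by an elementary linear-algebra observation about $I-M$, the other by the number-theoretic \cref{lem:finite_quotient}), and the snake-lemma exactness of the right column is what makes the assembly automatic. The only point that is worth being careful about is confirming that $M$ indeed satisfies the hypothesis of \cref{lem:finite_quotient}, namely that its real eigenvalue $\alpha$ is different from $\pm 1$; this is immediate from the defining condition $\alpha > 1$.
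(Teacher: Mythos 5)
Your proposal is correct and follows essentially the same route as the paper: the paper also reads off finiteness of $\Aut(X)$ from the right-hand column of the diagram \eqref{eq:sm_comm_diag}, using that $F = \coker\phi$ is finite because $I-M$ is invertible (no eigenvalue of $M$ equals $1$) and that $\Gamma/G_0$ is finite by \cref{lem:finite_quotient}. The ``in particular'' clause via \cref{lem:ppty_of_wild}~(2) is likewise the intended justification.
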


\subsection{Type \texorpdfstring{\(S_M^{(+)}\)}{SM(+)}}\label{sub:inoue_smp}

Let \(M\in \SL_2(\mathbb{Z})\) be a matrix with two real eigenvalues \(\alpha\) and \(1/\alpha\) with \(\alpha>1\).
Let \((a_1, a_2)^T\) and \((b_1, b_2)^T\) be real eigenvectors of \(M\)
corresponding to \(\alpha\) and \(1/\alpha\), respectively,
and fix integers \(p_1,p_2,r\,(r\neq 0)\) and a complex number \(\tau\).
Define \((c_1, c_2)^T\) to be the solution of the following equation
\[
    (I-M)
    \begin{pmatrix}
        c_1 \\
        c_2
    \end{pmatrix}
    =
    \begin{pmatrix}
        e_1 \\
        e_2
    \end{pmatrix}
    +\frac{b_1a_2-b_2a_1}{r}
    \begin{pmatrix}
        p_1 \\
        p_2
    \end{pmatrix},
\]
where
\[
    e_{i}=\frac{1}{2}m_{i,1}(m_{i,1}-1)a_1b_1+\frac{1}{2}m_{i,2}(m_{i,2}-1)a_2b_2+m_{i,1}m_{i,2}b_1a_2, \quad i=1,2.
\]
Let \(G_M^{(+)}\) be the group of analytic automorphisms of \(W = \mathbb{H} \times \mathbb{C}\) generated by
\begin{align*}
    g_0\colon (w,z)   & \longmapsto (\alpha w, z+\tau),                     \\
    g_{i}\colon (w,z) & \longmapsto (w+a_{i}, z+b_{i}w+c_{i}), \quad i=1,2, \\
    g_3\colon (w,z)   & \longmapsto \Big(w, z+\frac{b_1a_2-b_2a_1}{r}\Big).
\end{align*}
We have the following relations between these generators
\begin{gather*}
    g_3g_{i}=g_{i}g_3 \quad \text{for } i = 0,1,2, \quad g_1^{-1}g_2^{-1}g_1g_2=g_3^r, \\
    g_0g_jg_0^{-1}=g_1^{m_{j,1}}g_2^{m_{j,2}}g_3^{p_{j}} \quad \text{for } j=1,2.
\end{gather*}
The action of \(G_M^{(+)}\) is free and properly discontinuous.
The quotient space \(X\coloneqq W/G_M^{(+)}\) is an Inoue surface of type \(S_M^{(+)}\)
(cf.~\cite{inoue1974surfaces}*{Section 3}).
Note that \(G_M^{(+)} \simeq H(r)\rtimes \mathbb{Z}\) as an abstract group,
where
\[
    H(r) = \big\langle g_1, g_2, g_3 \mid g_3g_{i}=g_{i}g_3, g_1^{-1}g_2^{-1}g_1g_2=g_3^r \big\rangle
\]
and \(\mathbb{Z}\) is generated by \(g_0\).
Further,
\begin{equation}\label{eq:smp_centre_one}
    Z(H(r)) = \langle g_3 \rangle \simeq \mathbb{Z} \ \ \text{and} \ \ H(r)/Z(H(r)) \simeq \mathbb{Z}^2,
\end{equation}
where $Z(H(r))$ is the centre of $H(r)$.

Similarly, there is a short exact sequence of groups
\[
    1\longrightarrow G_M^{(+)}\longrightarrow \widetilde{\Aut}(X)\longrightarrow \Aut(X)\longrightarrow 1,
\]
where \(\widetilde{\Aut}(X)\leq \Aut(W)\) is the normaliser of \(G_M^{(+)}\).

Let
\[
    \Gamma\coloneqq \{N\in \GL_2(\mathbb{Z}) \mid N \text{ and } M \text{ are simultaneously diagonalisable}\},
\]
which is an abelian group,
and let \(\tau\colon \widetilde{\Aut}(X)\longrightarrow \Gamma\) be the homomorphism
(not necessarily surjective) defined by \(u\mapsto N\),
where \(N\) is the matrix associated with \(u\)
as constructed in \cite{jia2022automorphisms}*{Section 6.2}.
Let \(K\) be the kernel of this homomorphism, with image \(\Gamma'\leq \Gamma\).
It is clear that any automorphism in \(K\) has the form
\[
    (w,z)\longmapsto (w+b,z+Aw+B)
\]
for some \(b\in \mathbb{R}\) and \(A,B\in \mathbb{C}\) satisfying some conditions
(\cite{jia2022automorphisms}*{(6.21) and (6.22)}).
An explicit calculation gives that the centre
\begin{equation}\label{eq:smp_centre_two}
    Z(K)\simeq \mathbb{C}
\end{equation}
which is generated by \((w,z)\longmapsto (w,z+B)\).
Further,
\begin{equation}\label{eq:smp_centre_three}
    K/Z(K)\simeq \mathbb{Z} \rtimes \mathbb{Z}
\end{equation}
which is generated by the images of
\[
    (w,z)\longmapsto (w+b,z) \quad \text{and} \quad (w,z)\longmapsto (w,z+Aw).
\]

Combining these, the snake lemma shows the diagrams below
are commutative with exact rows and columns.
\[
    \xymatrix@C=1pc{
    {} & 1 \ar[d] & 1 \ar[d] & 1 \ar[d] & {} \\
    1 \ar[r] & H(r) \ar[r] \ar[d] & K \ar[r] \ar[d] & F \ar[r] \ar[d] & 1 \\
    1 \ar[r] & G_M^{(+)} \ar[r] \ar[d] & \widetilde{\Aut}(X) \ar[r] \ar[d]^{\tau} & \Aut(X) \ar[r] \ar[d] & 1 \\
    1 \ar[r] & \mathbb{Z} \ar[r] \ar[d] & \Gamma' \ar[r] \ar[d] & \Gamma'/\mathbb{Z} \ar[r] \ar[d] & 1 \\
    {} & 1 & 1 & 1 & {}
    }
    \hspace{1em}
    \xymatrix@C=1pc{
    {} & 1 \ar[d] & 1 \ar[d] & 1 \ar[d] & {} \\
    1 \ar[r] & Z(H(r)) \ar[r] \ar[d] & H(r) \ar[r] \ar[d] & H(r)/Z(H(r)) \ar[r] \ar[d] & 1 \\
    1 \ar[r] & Z(K) \ar[r] \ar[d] & K \ar[r] \ar[d] &K/Z(K) \ar[r] \ar[d] & 1 \\
    1 \ar[r] & F' \ar[r] \ar[d] & F \ar[r] \ar[d] &F'' \ar[r] \ar[d] & 1 \\
    {} & 1 & 1 & 1 & {}
    }
\]
Note that \(F' \simeq \mathbb{C}^*\) by \cref{eq:smp_centre_one,eq:smp_centre_two},
and that \(F''\) is finite by \cref{eq:smp_centre_one,eq:smp_centre_three}.
Similarly to \cref{sub:inoue_sm},
\(\Gamma'/\mathbb{Z} \leq \Gamma/\mathbb{Z}\) is also a finite group by \cref{lem:finite_quotient}.
In conclusion,

\begin{theorem}\label{thm:aut_smp}
    Let \(X\) be an Inoue surface of type \(S_M^{(+)}\).
    Then the neutral connected component \(\Aut_0(X)\) of the automorphism group \(\Aut(X)\) is isomorphic to \(\mathbb{C}^*\)
    and the group of components \(\Aut(X)/\Aut_0(X)\) is finite.
    \qed
\end{theorem}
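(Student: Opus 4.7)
The plan is to combine the two commutative diagrams just constructed with the Lie-group structure on $\Aut(X)$, which is a finite-dimensional (complex) Lie group by Bochner--Montgomery. From the left-hand diagram, $\Aut(X)/F \simeq \Gamma'/\mathbb{Z}$, and this is finite since $\Gamma/\mathbb{Z}$ is finite by \cref{lem:finite_quotient} applied in the case $n = 2$. From the right-hand diagram, $F/F' \simeq F''$ is finite by \cref{eq:smp_centre_one,eq:smp_centre_three}. Chaining, $[\Aut(X) : F']$ is finite.

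The subgroup $F' \simeq \mathbb{C}^*$ is realised concretely as the image of the holomorphic homomorphism $\mathbb{C} \to \Aut(X)$, $B \mapsto [(w,z) \mapsto (w,z+B)]$, whose kernel is the discrete subgroup $Z(H(r)) \simeq \mathbb{Z}$. In particular, $F'$ is connected, so $F' \subseteq \Aut_0(X)$. Moreover, $F$ and $F'$ are closed in $\Aut(X)$ as kernels of locally constant maps to discrete groups (respectively $\Aut(X) \to \Gamma'/\mathbb{Z}$ and $F \to F''$). A standard Lie-theoretic fact then forces any closed connected subgroup of finite index in a Lie group to coincide with the identity component (such a subgroup is automatically open, hence contains the identity component, and equality follows from connectedness). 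Applying this to $F'$ yields $\Aut_0(X) = F' \simeq \mathbb{C}^*$, and consequently $\Aut(X)/\Aut_0(X) = \Aut(X)/F'$ is finite.

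The only mildly delicate point is the local constancy of the associated-matrix map $\Aut(X) \to \Gamma'/\mathbb{Z}$; this follows from the integrality of the matrix entries, which prevents any continuous deformation of the associated matrix. Granting this, the remainder is a routine diagram chase together with the identity-component criterion cited above.
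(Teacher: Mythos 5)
Your argument is correct and follows essentially the same route as the paper: the paper's proof consists precisely of the two snake-lemma diagrams, the finiteness of \(\Gamma'/\mathbb{Z}\) via \cref{lem:finite_quotient} and of \(F''\), and the identification \(F'\simeq\mathbb{C}^*\), from which the statement is read off. The only thing you add is the explicit (and correct) Lie-theoretic justification that the closed connected finite-index subgroup \(F'\) must coincide with \(\Aut_0(X)\), a step the paper leaves implicit.
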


\begin{example}\label{ex:smp}
    Let \(X\) be an Inoue surface of type \(S_M^{(+)}\) defined by \(G_M^{(+)}\)
    and keep the notation in \cref{sub:inoue_smp}.
    It is known that, for \(n_1, n_2, l, k \in \mathbb{Z}\),
    \[
        g_1^{n_1} g_2^{n_2} g_3^{l} g_0^{k} (w,z)
        = \Big(\alpha^{k} w + \sum_j n_{j}a_j,
        z + k \tau + l\frac{b_1a_2 - b_2a_1}{r}
        + \Big(\sum_j n_{j}b_j\Big)\alpha^{k} w
        + \sum_j n_{j}c_j + e(n_1, n_2)\Big),
    \]
    where
    \[
        e(n_1, n_2)=\frac{1}{2}n_{1}(n_{1}-1)a_1b_1
        + \frac{1}{2}n_{2}(n_{2}-1)a_2b_2
        + n_{1}n_{2}a_2b_1.
    \]
    Pick an element \(B \in \mathbb{C} \simeq Z(K)\)
    such that \(\{nB \mid n \in \mathbb{Z}_{>0}\}\) is disjoint with the following set
    \[
        T \coloneqq \Big\{
        k \tau
        + l\frac{b_1a_2 - b_2a_1}{r}
        + \Big(\sum_j n_{j}b_j\Big) \Big(\sum_j n_{j}\alpha_j\Big) \frac{\alpha^k}{1 - \alpha^k}
        + \sum_j n_{j}c_j + e(n_1,n_2)
        \mid n_1, n_2, l, k \in \mathbb{Z}
        \Big\}.
    \]
    This is possible because only \(\tau\) in the definition of the set \(T\) could be a non-real number.
    Consider the automorphism
    \[
        \widetilde{\sigma} \in \widetilde{\Aut}(X), \quad (w, z) \longmapsto (w, z + B)
    \]
    and the corresponding automorphism \(\sigma \in \Aut(X)\).
    We claim that \(\sigma\) is a wild automorphism.

    It follows from the construction of \(\sigma\)
    that it does not have any periodic points.
    Indeed, if \((w,z) \in W\) corresponds to a \(\sigma\)-periodic point on \(X\),
    then \(\widetilde{\sigma}^n(w,z) = g_1^{n_1} g_2^{n_2} g_3^{l} g_0^{k} (w,z)\)
    for some \(n_1, n_2, l, k \in \mathbb{Z}\) and \(n \in \mathbb{Z}_{>0}\).
    Now we have
    \[
        \begin{cases}
            w = \alpha^{k} w + \sum_j n_{j}a_j, \\
            z + nB = z + k \tau + l\frac{b_1a_2 - b_2a_1}{r}
            + \Big(\sum_j n_{j}b_j\Big)\alpha^{k} w
            + \sum_j n_{j}c_j + e(n_1, n_2),
        \end{cases}
    \]
    which implies that \(nB \in T\), a contradiction.
    Since \(X\) does not contain any curve (\cite{inoue1974surfaces}*{Proposition~3.i) in page 277}),
    \(X\) cannot have any non-trivial proper \(\sigma\)-invariant analytic subset.
    Hence \(\sigma\) acts on \(X\) as a wild automorphism.
\end{example}

\subsection{Type \texorpdfstring{\(S_M^{(-)}\)}{SM(-)}}\label{sub:inoue_smn}

Let \(M\in \GL_2(\mathbb{Z})\) be a matrix with
two real eigenvalues \(\alpha\) and \(-1/\alpha\) with \(\alpha>1\).
Let \((a_1, a_2)^T\) and \((b_1, b_2)^T\) be real eigenvectors of \(M\)
corresponding to \(\alpha\) and \(1/\alpha\), respectively,
and fix integers \(p_1,p_2,r\,(r\neq 0)\) and a complex number \(\tau\).
Define \((c_1, c_2)^T\) to be the solution of the following equation
\[
    -(I+M)
    \begin{pmatrix}
        c_1 \\
        c_2
    \end{pmatrix}
    =
    \begin{pmatrix}
        e_1 \\
        e_2
    \end{pmatrix}
    +\frac{b_1a_2-b_2a_1}{r}
    \begin{pmatrix}
        p_1 \\
        p_2
    \end{pmatrix},
\]
where
\[
    e_{i} = \frac{1}{2}m_{i,1}(m_{i,1}-1)a_1b_1
    + \frac{1}{2}m_{i,2}(m_{i,2}-1)a_2b_2
    + m_{i,1}m_{i,2}b_1a_2,
    \quad i=1,2.
\]
Let \(G_M^{(-)}\) be the group of analytic automorphisms
of \(W = \mathbb{H} \times \mathbb{C}\) generated by
\begin{align*}
    g_0\colon (w,z)   & \longmapsto (\alpha w, -z),                         \\
    g_{i}\colon (w,z) & \longmapsto (w+a_{i}, z+b_{i}w+c_{i}), \quad i=1,2, \\
    g_3\colon (w,z)   & \longmapsto \Big(w, z+\frac{b_1a_2-b_2a_1}{r}\Big).
\end{align*}
We have the following relations between these generators
\begin{gather*}
    g_3g_{i}=g_{i}g_3 \quad \text{for } i = 1,2, \quad g_1^{-1}g_2^{-1}g_1g_2=g_3^r, \\
    g_0g_jg_0^{-1}=g_1^{m_{j,1}}g_2^{m_{j,2}}g_3^{p_{j}} \quad \text{for } j=1,2, \quad g_0 g_3 g_0^{-1} = g_3^{-1}.
\end{gather*}
The action of \(G_M^{(-)}\) is free and properly discontinuous.
The quotient space \(X\coloneqq W/G_M^{(-)}\) is an Inoue surface of type \(S_M^{(-)}\)
(cf.~\cite{inoue1974surfaces}*{Section 4}).
Note that \(G_M^{(-)} \simeq H(r)\rtimes \mathbb{Z}\) as an abstract group,
where \(H(r) = \langle g_1, g_2, g_3 \mid g_3g_{i}=g_{i}g_3, g_1^{-1}g_2^{-1}g_1g_2=g_3^r \rangle\)
and \(\mathbb{Z}\) is generated by \(g_0\).
We have
\begin{equation}\label{eq:smn_centre_one}
    Z(H(r)) = \langle g_3 \rangle \simeq \mathbb{Z} \ \
    \text{and} \ \ H(r)/Z(H(r)) \simeq \mathbb{Z}^2.
\end{equation}

Similarly, there is a short exact sequence of groups
\[
    1\longrightarrow G_M^{(-)}\longrightarrow \widetilde{\Aut}(X)\longrightarrow \Aut(X)\longrightarrow 1,
\]
where \(\widetilde{\Aut}(X)\leq \Aut(W)\) is the normaliser of \(G_M^{(-)}\).

Let
\[
    \Gamma\coloneqq \{N\in \GL_2(\mathbb{Z}) \mid N \text{ and } M \text{ are simultaneously diagonalisable}\},
\]
which is an abelian group,
and let \(\tau\colon \widetilde{\Aut}(X)\longrightarrow \Gamma\) be the homomorphism
(not necessarily surjective) defined by \(u\mapsto N\),
where \(N\) is the matrix associated with \(u\)
as constructed in \cite{jia2022automorphisms}*{Section 6.2}.
Let \(K\) be the kernel of this homomorphism, with image \(\Gamma'\leq \Gamma\).
It is clear that any automorphism in \(K\) has the form
\[
    (w,z)\longmapsto (w+b,z+Aw+B)
\]
for some \(b\in \mathbb{R}\) and \(A,B\in \mathbb{C}\) satisfying some conditions.
An explicit calculation gives that the centre
\begin{equation}\label{eq:smn_centre_two}
    Z(K)\simeq \mathbb{Z}
\end{equation}
which is generated by
\[
    (w,z)\longmapsto \Big(w,z+\frac{1}{2} \cdot \frac{b_1a_2-b_2a_1}{r}\Big).
\]
Further,
\begin{equation}\label{eq:smn_centre_three}
    K/Z(K)\simeq \mathbb{Z} \rtimes \mathbb{Z}
\end{equation}
which is generated by the images of
\[
    (w,z)\longmapsto (w+b,z) \quad \text{and} \quad (w,z)\longmapsto (w,z+Aw).
\]

Combining these, the snake lemma shows the diagrams below
are commutative with exact rows and columns.
\[
    \xymatrix@C=1pc{
    {} & 1 \ar[d] & 1 \ar[d] & 1 \ar[d] & {} \\
    1 \ar[r] & H(r) \ar[r] \ar[d] & K \ar[r] \ar[d] & F \ar[r] \ar[d] & 1 \\
    1 \ar[r] & G_M^{(-)} \ar[r] \ar[d] & \widetilde{\Aut}(X) \ar[r] \ar[d]^{\tau} & \Aut(X) \ar[r] \ar[d] & 1 \\
    1 \ar[r] & \mathbb{Z} \ar[r] \ar[d] & \Gamma' \ar[r] \ar[d] & \Gamma'/\mathbb{Z} \ar[r] \ar[d] & 1 \\
    {} & 1 & 1 & 1 & {}
    }
    \hspace{1em}
    \xymatrix@C=1pc{
    {} & 1 \ar[d] & 1 \ar[d] & 1 \ar[d] & {} \\
    1 \ar[r] & Z(H(r)) \ar[r] \ar[d] & H(r) \ar[r] \ar[d] & H(r)/Z(H(r)) \ar[r] \ar[d] & 1 \\
    1 \ar[r] & Z(K) \ar[r] \ar[d] & K \ar[r] \ar[d] &K/Z(K) \ar[r] \ar[d] & 1 \\
    1 \ar[r] & F' \ar[r] \ar[d] & F \ar[r] \ar[d] &F'' \ar[r] \ar[d] & 1 \\
    {} & 1 & 1 & 1 & {}
    }
\]
Note that both \(F'\) and \(F''\) are finite by \cref{eq:smn_centre_one,eq:smn_centre_two,eq:smn_centre_three}.
Consequently, \(F\) is also a finite group.
Also, as before, the quotient \(\Gamma'/\mathbb{Z} \leq \Gamma/\mathbb{Z}\) is finite by \cref{lem:finite_quotient}.

\begin{theorem}\label{thm:aut_smn}
    Let \(X\) be an Inoue surface of type \(S_M^{(-)}\).
    Then the automorphism group \(\Aut(X)\) is finite.
    In particular, \(X\) does not have any wild automorphism.
    \qed
\end{theorem}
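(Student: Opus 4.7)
The plan is to assemble the two commutative diagrams already constructed in \cref{sub:inoue_smn} to read off the finiteness of $\Aut(X)$, and then invoke \cref{lem:ppty_of_wild}~(2) to rule out wild automorphisms.

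First, I would read off the finiteness of $F$ from the right-hand diagram. The top row $1 \to Z(H(r)) \to H(r) \to H(r)/Z(H(r)) \to 1$ has $Z(H(r)) \simeq \mathbb{Z}$ and $H(r)/Z(H(r)) \simeq \mathbb{Z}^2$ by \cref{eq:smn_centre_one}, while the middle row $1 \to Z(K) \to K \to K/Z(K) \to 1$ has $Z(K) \simeq \mathbb{Z}$ by \cref{eq:smn_centre_two} and $K/Z(K) \simeq \mathbb{Z} \rtimes \mathbb{Z}$ by \cref{eq:smn_centre_three}. The induced columns give $F' = Z(K)/Z(H(r)) \simeq \mathbb{Z}/2\mathbb{Z}$ (since $Z(K)$ is generated by a half of the generator of $Z(H(r))$) and $F'' = (K/Z(K))/(H(r)/Z(H(r)))$, which is finite since both groups have the same rank; this is the main computational step, but it is just the snake lemma together with the explicit comparison of generators. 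The bottom row of this diagram then yields $F$ finite.

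Next, I would handle $\Gamma'/\mathbb{Z}$. Since $M \in \GL_2(\mathbb{Z})$ has eigenvalues $\alpha \neq \pm 1$ and $-1/\alpha$, this falls exactly into the $n = 2$ case of \cref{lem:finite_quotient}; hence $\Gamma/\mathbb{Z}$ is finite, and a fortiori so is the subgroup $\Gamma'/\mathbb{Z}$.

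With these two facts in hand, the right-hand column of the first commutative diagram reads
\[
1 \longrightarrow F \longrightarrow \Aut(X) \longrightarrow \Gamma'/\mathbb{Z} \longrightarrow 1,
\]
with finite kernel and finite quotient, so $\Aut(X)$ is itself finite. Finally, \cref{lem:ppty_of_wild}~(2) tells us that any wild automorphism has infinite order, so the finiteness of $\Aut(X)$ immediately precludes the existence of a wild automorphism.

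The only subtle point I expect is correctly identifying the groups $F'$ and $F''$ from the explicit formulas for the generators, in particular the fact that in the $S_M^{(-)}$ case the center $Z(K)$ is only $\mathbb{Z}$ (generated by the half-shift $z \mapsto z + \tfrac{1}{2}\tfrac{b_1 a_2 - b_2 a_1}{r}$) rather than $\mathbb{C}$ as in the $S_M^{(+)}$ case; this is precisely the structural difference that forces $\Aut_0(X)$ to be trivial here, and the rest of the argument then parallels the $S_M$ case of \cref{thm:aut_sm}.
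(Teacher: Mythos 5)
Your proposal is correct and follows essentially the same route as the paper: the finiteness of $F$ is read off from the second diagram via \cref{eq:smn_centre_one,eq:smn_centre_two,eq:smn_centre_three} (your identification of $F'\simeq\mathbb{Z}/2\mathbb{Z}$ as the cokernel of the index-two inclusion $Z(H(r))\hookrightarrow Z(K)$ is a correct sharpening of the paper's mere finiteness claim), the finiteness of $\Gamma'/\mathbb{Z}$ comes from \cref{lem:finite_quotient} in its $n=2$ case with eigenvalues $\alpha$ and $-1/\alpha$, and the extension $1\to F\to\Aut(X)\to\Gamma'/\mathbb{Z}\to 1$ then gives finiteness of $\Aut(X)$, ruling out wild automorphisms by \cref{lem:ppty_of_wild}~(2). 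You also correctly isolate the structural point that distinguishes this case from $S_M^{(+)}$, namely that $Z(K)$ is discrete rather than $\mathbb{C}$.
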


\begin{proof}[Proof of \cref{thm:wild_dim2}]
    Part~(1) is a consequence of \cref{prop:wild_kahler_dim2,prop:wild_nonkahler_dim2}.
    Part~(2) follows from \cite{cantat2020automorphisms}*{Lemma~6.7} and \cref{ex:smp} immediately.
\end{proof}

\begin{remark}
    Let $V$ be a compact complex manifold.
    Then the automorphism group $\Aut(V)$ is a complex Lie group whose Lie algebra is $\Gamma(V, \Theta_V)$.
    Here $\Theta_V$ is the holomorphic tangent bundle of $V$.

    By \cite{inoue1974surfaces}*{Propositions~2.ii), 3.ii) and 5},
    we have $\dim \Gamma(X, \Theta_X) = 0, 1, 0$
    for \(X\) an Inoue surface of type \(S_M\), \(S_M^{(+)}\), \(S_M^{(-)}\), respectively,
    with which our results are compatible.
\end{remark}

\section{Wild automorphisms in dimension three and four}\label{sec:wild_dim3}

In this section, we study wild automorphisms of compact K\"ahler manifolds, and
prove \cref{thm:main,thm:main_zero_entropy}
following the strategy of \cite{oguiso2022wild}.

\begin{proof}[Proof of \cref{thm:main}]

    The assertion (2) follows from the assertion (1), \cref{prop:entropy_tori} and \cite{oguiso2022wild}*{Lemma~6.1}. In what follows, we show the assertion (1).

    Suppose that \(X\) is neither a complex torus nor a weak Calabi--Yau threefold. Then the Kodaira dimension \(\kappa(X) < 0\)
    by Propositions \ref{prop:numerical_inv_of_wild} (2) and \ref{prop:qtorus}.
    Hence \(X\) is uniruled by \cite{brunella2006positivity}*{Corollary 1.2}. Moreover, by \cref{lem:mrc}, the MRC fibration \(f\colon X \longrightarrow Y\)
    is a \(\sigma\)-equivariant surjective smooth morphism with \(1 \leq \dim Y \leq 2\).
    Therefore, \(Y\) is a complex torus of dimension \(1\) or \(2\),
    and every fibre \(F\) of \(f\) is a smooth rational variety of dimension \(2\) or \(1\), respectively.
    Here we use the fact that a rationally connected variety of dimension at most two is a rational variety.

    We claim that \(Y = \Alb(X)\). In fact, since \(Y = \Alb(Y)\),
    there is a morphism \(g\colon \Alb(X) \longrightarrow Y\) such that \(g\circ \alb_X = f\)
    by the universal property of the Albanese map \(\alb_X\).
    Since \(f\) is surjective with connected fibres, so is \(g\).
    Let \(X_y\) (\(y\in Y\)) be any closed fibre of \(f\).
    Then \(\alb_X(X_y)\) is a point for each \(X_y\),
    because \(X_y\) is rationally connected, and any complex torus contains no rational curve.
    Thus, \(g\) is a finite morphism (with connected fibres),
    so it is an isomorphism by the normality of \(\Alb(X)\) and \(Y\).
    This proves the claim.

    If the irregularity \(q(X) = 1\), then \(Y\) is an elliptic curve, and every closed fibre \(X_y\) over \(y \in Y\) is a smooth rational surface. This implies that \(X\) is projective (cf.~\cite{prokhorov2021equivarianta}*{16.3.1~Proposition}),
    which contradicts \cite{oguiso2022wild}*{Proposition~5.2}.

    If the irregularity \(q(X) = 2\), then \(Y\) is a complex torus of dimension two,
    and every closed fibre \(X_y\) over \(y \in Y\) is a smooth rational curve: \(X_y \cong \PP^1\). Consider the \(\sigma\)-equivariant fibration \(f\colon X \longrightarrow Y\) with the induced wild automorphism \(\sigma_Y\) on \(Y\).
    Note that \(f\) is a locally trivial projective morphism.
    By \cref{thm_rrz7.2}, we can write the wild automorphism \(\sigma_Y = T_b\circ \alpha\) on \(Y\)
    where \(T_b\) is a translation on the complex torus \(Y\)
    and \(\alpha\colon Y \longrightarrow Y\) is a group automorphism
    such that the endomorphism \(\beta = \alpha - \id_Y\) is nilpotent.

    Since the anti-canonical divisor \(-K_{X_y}\) of a general fibre \(X_y \simeq \mathbb{P}^1\) is ample,
    the automorphism \(\sigma_Y = T_b\circ \alpha\) cannot be a translation by \cref{lem_oz2.11}.
    In particular, \(\beta \neq 0\).
    Take \(B\) to be a connected component of \(\Ker \beta\)
    Then \(B\) being an one-dimensional subtorus of \(Y\), is an elliptic curve.
    Thus, \(\sigma_Y\) permutes the cosets of \(E\coloneqq Y/B\),
    an elliptic curve.
    Hence the quotient map \(Y \longrightarrow E\) is \(\sigma_Y\)-equivariant.
    Since the action of \(\sigma_Y\) on \(Y\) is wild,
    so is the induced action \(\sigma_E\) on \(E\) (cf.~\cref{lem:ppty_of_wild2} (2)).
    Hence \(\sigma_E\) is a translation of infinite order as \(\Aut(E)\) is an algebraic group (see \cref{prop:lie_gp_action_contains_wild}).

    Consider the $\sigma$-equivariant fibration $g\colon X \to E$, which is the composition $X \to Y \to E = Y/B$.
    This is a smooth fibration by \cref{lem:ppty_of_wild2}~(1).
    Note that every fibre of $X\to Y$ is isomorphic to $\PP^1$.
    Thus each fibre $X_e$ of $g$ over $e \in E$ is a relatively minimal ruled surface over the elliptic curve $Y_e$
    (which is the fibre of $Y\to E$ over $e \in E$).

    For the rest,
    the identical proof of \cite{oguiso2022wild}*{Proposition~4.1}
    still works here and shows that the case $q(X) = 2$ cannot happen,
    with their Lemma~2.5 and Theorem~3.1 replaced by our \cref{lem:ppty_of_wild2,prop:wild_kahler_dim2}, respectively.

    Therefore, \(X\) is either a complex torus or a weak Calabi--Yau threefold. This proves the assertion (1).
\end{proof}

Recall that a weak Calabi--Yau manifold \(V\) is a complex projective manifold with torsion canonical divisor and finite fundamental group.
In particular, \(H^1(\OO_V) = 0\) and \(\Pic^0(V)\) is trivial.
So on \(V\) the Picard group coincides with the N\'eron--Severi group, which is a finitely generated abelian group.

Now we consider a weak Calabi--Yau threefold \(X\).
By a result of Miyaoka (\cite{miyaoka1987chern}*{Theorem~6.6}), we have \(c_2(X) \cdot D \geq 0\) for each nef Cartier divisor \(D\) on \(X\).
Moreover, by \cite{kobayashi2014differential}*{Corollary~\Romannum{4}.4.15}, \(c_2(X) \neq 0\),
and thus, \(c_2(X) \cdot H > 0\) for every ample Cartier divisor \(H\) on \(X\).

\begin{proof}[Proof of \cref{prop:CY}]
    (1)
    Assume that
    \(c_2(X)\cdot D > 0\) for every non-torsion nef Cartier divisor \(D\) on \(X\).
    Then it is well-known that the automorphism group \(\Aut(X)\) is finite
    (see e.g., \cite{wilson1994minimal}*{Lemma~3.1}).
    Thus, \(X\) admits no wild automorphism in this case.

    (2)
    Assume that there is a non-torsion semi-ample Cartier divisor \(D\) such that \(c_2(X) \cdot D = 0\).
    Then some multiple of \(D\) induces a \(c_2\)-contraction
    (in the sense that \(c_2(X) \cdot D = 0\); cf.~\cite{oguiso2001calabi}*{Page~45}).
    Consider the maximal \(c_2\)-contraction \(\phi\colon X \to Y\) on \(X\)
    (see \cite{oguiso2001calabi}*{Lemma-Definition~4.1}).
    Then \(\phi\) is not an isomorphism and \(1\leq \dim Y \leq 3\).

    Suppose that \(X\) admits a wild automorphism \(\sigma\).
    Then by the uniqueness of the maximal \(c_2\)-contraction (\cite{oguiso2001calabi}*{Lemma-Definition~4.1}),
    \(\sigma\) descends to an automorphism on \(Y\) which is wild by \cref{lem:ppty_of_wild2}~(2).
    If \(\dim Y = 3\), then \(\phi\) is birational and hence an isomorphism by \cref{lem:ppty_of_wild2}~(3), a contradiction.
    If \(\dim Y \leq 2\), then by \cref{thm:wild_dim2}, \(Y\) is a complex torus.
    On the other hand, since \(\pi_0(X)\) is finite, \(H^1(\OO_X) = 0\), which implies \(H^1(\OO_Y) = 0\).
    This is a contradiction.
    Therefore, \(X\) admits no wild automorphism.
\end{proof}

\begin{remark}
    \leavevmode
    \begin{enumerate}[wide=0pt, leftmargin=*]
        \item A similar argument to the proof of \cref{prop:CY} was also used in the proof of \cite{kirson2010wild}*{Theorem~4.7}.
        \item We refer to \cite{oguiso2001calabi} for classification results and concrete examples of Calabi--Yau threefolds satisfying assumption (2) in \cref{prop:CY}.
    \end{enumerate}
\end{remark}

\begin{proof}[Proof of \cref{thm:main_zero_entropy}]
    Claim~(1) follows from \cref{thm:wild_dim2,thm:main}.
    For Claim~(2), first note that by \cref{prop:numerical_inv_of_wild}~(2), \(K_X \sim_{\QQ} 0\);
    moreover, we may assume that \(X\) coincides with its Beauville--Bogomolov minimal split cover
    and that each factor \(X_i\) of \(X\) is stable under \(\sigma\) with \(\sigma_{X_i}\) wild.
    By \cref{prop:numerical_inv_of_wild} (1),
    \(\chi(\mathcal{O}_{X_i}) = 0\),
    so \(X_i\) is either a complex torus or a Calabi--Yau threefold in the strict sense.
    Note that \(\sigma\) has zero entropy if and only if \(\sigma_{X_i}\) has zero entropy
    for every factor \(X_i\).
    Now Claim~\((2)\) follows from \cref{prop:entropy_tori} and \cref{thm:main}.

    Let us finally show Claim \((3)\).
    Note that \(X\) is smooth.
    By assumption and \cref{prop_oz2.15}, we may assume \(q(X) = 0\).
    It is equivalent to show the first dynamical degree \(d_1(\sigma) = 1\).
    Now the argument of \cite{oguiso2022wild}*{Lemma~6.3} is still available.
    In fact, the proof of \cite{oguiso2022wild}*{Lemma~6.3} involves Hodge theory
    which holds for compact K\"{a}hler manifolds (see \cite{voisin2002hodge}*{Chapter~\Romannum{2}, \S~6}).
\end{proof}


\begin{bibdiv}
    \begin{biblist}
        \bib{beauville1983remarks}{incollection}{
            author={Beauville, Arnaud},
            title={Some remarks on K\"ahler manifolds with \(c_1=0\)},
            date={1983},
            booktitle={Classification of algebraic and analytic manifolds, Katata, 1982, Progr.~Math., vol.~\textbf{39}, Birkh\"auser},
            editor={Ueno, Kenji},
            address={Katata symposium proceedings 1982},
            pages={1\ndash 26},
        }

        \bib{barth2004compact}{book}{
            author={Barth, Wolf~P.},
            author={Hulek, Klaus},
            author={Peters, Chris A.~M.},
            author={Van de Ven, Antonius},
            title={Compact Complex Surfaces},
            publisher={Springer},
            address={Berlin, Heidelberg},
            date={2004},
            ISBN={978-3-540-00832-3 978-3-642-57739-0},
        }

        \bib{bogomolov1976classification}{article}{
            author={Bogomolov, Fedor~A.},
            title={Classification of surfaces of class \Romannum{7}\(_0\) with \(b_2 = 0\)},
            date={1976},
            ISSN={0025-5726},
            journal={Math. USSR Izv.},
            volume={10},
            number={2},
            pages={255\ndash 269},
        }

        \bib{boucksom2002volume}{article}{
            author={Boucksom, S\'ebastien},
            title={On the volume of a line bundle},
            date={2002},
            ISSN={0129-167X, 1793-6519},
            journal={Int. J. Math.},
            volume={13},
            number={10},
            pages={1043\ndash 1063},
        }

        \bib{brunella2006positivity}{article}{
            author={Brunella, Marco},
            title={A positivity property for foliations on compact K\"ahler manifolds},
            date={2006},
            ISSN={0129-167X, 1793-6519},
            journal={Int. J. Math.},
            volume={17},
            number={01},
            pages={35\ndash 43},
        }

        \bib{cantat1999dynamique}{article}{
            author={Cantat, Serge},
            title={Dynamique des automorphismes des surfaces projectives complexes},
            date={1999},
            ISSN={07644442},
            journal={Comptes Rendus de l'Acad\'emie des Sciences - Series I - Mathematics},
            volume={328},
            number={10},
            pages={901\ndash 906},
        }

        \bib{cao2020rational}{article}{
            author={Cao, Junyan},
            author={H\"oring, Andreas},
            title={Rational curves on compact K\"ahler manifolds},
            date={2020},
            journal={J. Differential Geom.},
            volume={114},
            number={1},
            pages={1\ndash 39},
        }

        \bib{cantat2020automorphisms}{article}{
            author={Cantat, Serge},
            author={Paris-Romaskevich, Olga},
            title={Automorphisms of compact K\"ahler manifolds with slow dynamics},
            date={2020},
            ISSN={0002-9947, 1088-6850},
            journal={Trans. Amer. Math. Soc.},
            volume={374},
            number={2},
            pages={1351\ndash 1389},
        }

        \bib{cantat2022free}{article}{
            author = {Cantat, Serge},
            author = {Paris-Romaskevich, Olga},
            author = {Xie, Junyi},
            title = {Free actions of large groups on complex threefolds},
            journal = {Bulletin of the London Mathematical Society},
            volume = {54},
            number = {5},
            pages = {1791\ndash 1803},
            url = {https://londmathsoc.onlinelibrary.wiley.com/doi/abs/10.1112/blms.12656},
            year = {2022}
        }

        \bib{dinh2012relative}{article}{
            author={Dinh, Tien-Cuong},
            author={Nguy\^en, Vi\^et-Anh},
            author={Truong, Tuyen Trung},
            title={On the dynamical degrees of meromorphic maps preserving a fibration},
            date={2012},
            ISSN={0219-1997},
            journal={Commun. Contemp. Math.},
            volume={14},
            number={6},
            pages={1250042-1\ndash 1250042-18},
        }

        \bib{dinh2017equidistribution}{article}{
            author={Dinh, Tien-Cuong},
            author={Sibony, Nessim},
            title={Equidistribution problems in complex dynamics of higher dimension},
            date={2017},
            ISSN={0129-167X},
            journal={Int. J. Math.},
            volume={28},
            number={7},
            pages={1750057-1\ndash 1750057-31},
        }

        \bib{frisch1967points}{article}{
            author={Frisch, Jacques},
            title={Points de platitude d'un morphisme d'espaces analytiques complexes},
            date={1967},
            ISSN={0020-9910, 1432-1297},
            journal={Invent. Math},
            volume={4},
            number={2},
            pages={118\ndash 138},
        }

        \bib{fujiki1978automorphism}{article}{
            author={Fujiki, Akira},
            title={On automorphism groups of compact K\"ahler manifolds},
            date={1978},
            journal={Invent. Math.},
            volume={44},
            pages={225\ndash 258},
        }

        \bib{fujiki2009automorphisms}{article}{
            url = {https://arxiv.org/abs/0903.5374},
            author={Fujiki, Akira},
            title = {Automorphisms of parabolic Inoue surfaces},
            journal = {arXiv},
            eprint={2201.06748},
            year = {2009},
        }

        \bib{grauert1984coherent}{book}{
            author={Grauert, Hans},
            author={Remmert, Reinhold},
            title={Coherent Analytic Sheaves},
            series={Grundlehren Der Mathematischen Wissenschaften},
            publisher={Springer Berlin Heidelberg},
            address={Berlin, Heidelberg},
            date={1984},
            volume={265},
            ISBN={978-3-642-69584-1 978-3-642-69582-7},
        }

        \bib{inoue1974surfaces}{article}{
            author={Inoue, Masahisa},
            title={On surfaces of class \Romannum{7}\(_0\)},
            date={1974},
            ISSN={0020-9910, 1432-1297},
            journal={Invent. Math},
            volume={24},
            number={4},
            pages={269\ndash 310},
        }

        \bib{jia2022automorphisms}{article}{
            author={Jia, Jia},
            title={Automorphism groups of compact complex surfaces:
                    T-Jordan property, Tits alternative and solvability},
            date={2023},
            journal={J. Geom. Anal.},
            volume={33},
            number={219},
        }

        \bib{jia2022equivariant}{article}{
            author={Jia, Jia},
            author={Meng, Sheng},
            title={Equivariant K\"ahler model for Fujiki's class},
            date={2022},
            journal={arXiv},
            eprint={2201.06748},
        }

        \bib{kirson2010wild}{article}{
            author={Kirson, Antonio},
            title={Wild automorphisms of varieties with Kodaira dimension \(0\)},
            date={2010},
            ISSN={0430-3202, 1827-1510},
            journal={Ann. Univ. Ferrara},
            volume={56},
            number={2},
            pages={327\ndash 333},
        }

        \bib{kobayashi2014differential}{book}{
            author={Kobayashi, Shoshichi},
            title={Differential geometry of complex vector bundles},
            series={Princeton Legacy Library},
            note={Reprint of the 1987 edition},
            publisher={Princeton University Press, Princeton, NJ},
            date={2014},
        }


        \bib{miyaoka1987chern}{incollection}{
            author={Miyaoka, Yoichi},
            title={The Chern classes and Kodaira dimension of a minimal variety},
            date={1987},
            booktitle={Algebraic geometry, Sendai, 1985, Adv. Stud. Pure Math., vol. \textbf{10}},
            address={North-Holland, Amsterdam},
            pages={449\ndash 476},
        }

        \bib{oguiso2001calabi}{article}{
            author={Oguiso, Keiji},
            author={Sakurai, Jun},
            title={Calabi-Yau threefolds of quotient type},
            year={2001},
            journal={Asian J. Math.},
            volume={5},
            number={1},
            pages={43\ndash 77},
        }

        \bib{oguiso2014aspects}{incollection}{
            author={Oguiso, Keiji},
            title={Some aspects of explicit birational geometry inspired by complex dynamics},
            date={2014},
            booktitle={Proceedings of the International Congress of Mathematicians--Seoul 2014. Vol.~II},
            address={Kyung Moon Sa, Seoul},
            pages={1\ndash 26},
            eprint={1404.2982},
        }

        \bib{oguiso2022wild}{article}{
            author={Oguiso, Keiji},
            author={Zhang, De-Qi},
            title={Wild automorphisms of projective varieties, the maps which have no invariant proper subsets},
            date={2022},
            ISSN={00018708},
            journal={Advances in Mathematics},
            volume={396},
            pages={108173},
        }

        \bib{prokhorov2021equivarianta}{article}{
            author={Prokhorov, Yuri},
            title={Equivariant minimal model program},
            date={2021},
            ISSN={0036-0279, 1468-4829},
            journal={Russ. Math. Surv.},
            volume={76},
            number={3},
            pages={461\ndash 542},
        }

        \bib{prokhorov2020automorphism}{article}{
            author={Prokhorov, Yuri},
            author={Shramov, Constantin},
            title={Automorphism groups of Inoue and Kodaira surfaces},
            date={2020},
            ISSN={10936106, 19450036},
            journal={Asian J. Math.},
            volume={24},
            number={2},
            pages={355\ndash 368},
        }

        \bib{prokhorov2020bounded}{article}{
            author={Prokhorov, Yuri},
            author={Shramov, Constantin},
            title={Bounded automorphism groups of compact complex surfaces},
            date={2020},
            ISSN={1064-5616, 1468-4802},
            journal={Sb. Math.},
            volume={211},
            number={9},
            pages={1310\ndash 1322},
            eprint={1909.12013},
        }

        \bib{prokhorov2021automorphism}{article}{
            author={Prokhorov, Yuri},
            author={Shramov, Constantin},
            title={Automorphism groups of compact complex surfaces},
            date={2021},
            ISSN={1073-7928, 1687-0247},
            journal={Int. Math. Res. Not. IMRN},
            volume={2021},
            number={14},
            pages={10490\ndash 10520},
        }

        \bib{reichstein2006projectively}{article}{
            author={Reichstein, Zinovy},
            author={Rogalski, Daniel},
            author={Zhang, James J.},
            title={Projectively simple rings},
            date={2006},
            ISSN={00018708},
            journal={Advances in Mathematics},
            volume={203},
            number={2},
            pages={365\ndash 407},
        }

        \bib{teleman1994projectively}{article}{
            author={Teleman, Andrei},
            title={Projectively flat surfaces and Bogomolov's theorem on class \Romannum{7}\(_0\) surfaces},
            date={1994},
            ISSN={0129-167X},
            journal={Int. J. Math.},
            volume={5},
            number={2},
            pages={253\ndash 264},
        }

        \bib{ueno1975classification}{book}{
            author={Ueno, Kenji},
            title={Classification Theory of Algebraic Varieties and Compact Complex Spaces},
            series={Lecture Notes in Mathematics, vol. \textbf{439}},
            publisher={Springer Berlin Heidelberg},
            address={Berlin, Heidelberg},
            date={1975},
            ISBN={978-3-540-07138-9 978-3-540-37415-2},
        }

        \bib{voisin2002hodge}{book}{
            author={Voisin, Claire},
            translator={Schneps, Leila},
            title={Hodge Theory and Complex Algebraic Geometry I},
            series={Cambridge Studies in Advanced Mathematics, vol. \textbf{76}},
            publisher={Cambridge University Press},
            date={2002},
            ISBN={978-0-521-80260-4 978-0-521-71801-1 978-0-511-61534-4},
        }

        \bib{wilson1994minimal}{incollection}{
            title={Minimal models of Calabi-Yau threefolds},
            author={Wilson, P.M.H.},
            booktitle={Classification of Algebraic Varieties, Contemporary Mathematics, Volumn \textbf{162}},
            pages={403\ndash 410},
            year={1994},
            publisher={American Mathematical Society}
        }

        \bib{zhong2021compact}{article}{
            author={Zhong, Guolei},
            title={Compact K\"ahler threefolds with the action of an abelian group of maximal rank},
            date={2021},
            ISSN={0002-9939, 1088-6826},
            journal={Proc. Amer. Math. Soc.},
            volume={150},
            number={01},
            pages={55\ndash 68},
        }
    \end{biblist}
\end{bibdiv}

\end{document}